\documentclass[11pt]{amsart}
\usepackage[margin=1in]{geometry}

\usepackage{amssymb}
\usepackage{amsthm}
\usepackage{amsmath}
\usepackage{mathrsfs}
\usepackage{amsbsy}
\usepackage[all]{xy}
\usepackage{bm}
\usepackage{hyperref}
\usepackage{tikz}
\usepackage{array}
\usepackage{float}
\usepackage{enumerate}
\usepackage{xcolor}
\usepackage{hhline}
\setlength{\parskip}{0em}
\allowdisplaybreaks
\usepackage[noadjust]{cite}

\usepackage{caption}
\usepackage{tabu}
\usepackage{diagbox}
\usepackage{comment}

\newenvironment{enumerate*}
  {\begin{enumerate}[(I)]
    \setlength{\itemsep}{10pt}
    \setlength{\parskip}{0pt}}
  {\end{enumerate}}

\newtheorem{theorem}{Theorem}[section]
\newtheorem{proposition}[theorem]{Proposition}

\newtheorem{conjecture}[theorem]{Conjecture}
\newtheorem{question}[theorem]{Question}

\newtheorem{lemma}[theorem]{Lemma}

\theoremstyle{definition}

\newtheorem{remark}[theorem]{Remark}

\DeclareMathOperator{\Cross}{Cross}
\DeclareMathOperator{\Rug}{Rug}
\DeclareMathOperator{\HComb}{HorComb}
\DeclareMathOperator{\VComb}{VerComb}
\DeclareMathOperator{\area}{area}

\DeclareMathOperator{\length}{length}

\newcommand{\longstitch}{(a,b,c,d)} 

\newcommand{\dfn}[1]{\textcolor{blue}{\emph{#1}}}

\begin{document}

\title[]{Extensions of Hitomezashi Patterns}
\subjclass[2010]{}

\author[Colin Defant]{Colin Defant$^\dagger$}
\address[]{Department of Mathematics, Massachusetts Institute of Technology, Cambridge, MA 02139, USA}
\email{colindefant@gmail.com}
\thanks{$^\dagger$Supported by the National Science Foundation under Award No. 2201907 and by a Benjamin Peirce Fellowship at Harvard University.}

\author[Noah Kravitz]{Noah Kravitz$^{\#}$}
\address[]{Department of Mathematics, Princeton University, Princeton, NJ 08540, USA}
\email{nkravitz@princeton.edu}
\thanks{$^{\#}$Research supported by an NSF Graduate Research Fellowship (grant DGE--2039656).}

\author[Bridget Eileen Tenner]{Bridget Eileen Tenner$^*$}
\address{Department of Mathematical Sciences, DePaul University, Chicago, IL, USA}
\email{bridget@math.depaul.edu}
\thanks{$^*$Research partially supported by NSF Grant DMS-2054436.}

\maketitle

\begin{abstract}
Hitomezashi, a form of traditional Japanese embroidery, gives rise to intricate arrangements of axis-parallel unit-length stitches in the plane. Pete studied these patterns in the context of percolation theory, and the first two authors recently investigated additional structural properties of them.    In this paper, we establish several optimization-style results on hitomezashi patterns and provide a complete classification of ``long-stitch'' hitomezashi patterns in which stitches have length greater than $1$.  We also study variants in which stitches can have directions not parallel to the coordinate axes.
\end{abstract}

\section{Introduction}\label{sec:intro}

\emph{Hitomezashi} is a style of Japanese embroidery in which stitches are arranged on a cloth according to rigid rules. Mathematically, we can think of the parts of the thread lying on top of the cloth (i.e., the parts that are visible when we view the cloth from above) as unit-length line segments called \dfn{stitches} in the infinite unit-square grid. More rigorously, a \dfn{hitomezashi pattern} is a collection of stitches with the property that every lattice point in the grid is the endpoint of exactly one horizontal stitch and exactly one vertical stitch. These patterns have gained a great deal of interest for their artistic beauty, but they are also beautiful mathematical structures. The deeper properties of hitomezashi patterns were first investigated by Pete \cite{Pete} in the context of percolation theory. Apparently unaware of the connection with Japanese art, Pete used the name \emph{corner percolation} to refer to hitomezashi patterns. We learned about hitomezashi patterns from the video \cite{Numberphile} on Brady Haran's YouTube channel \emph{Numberphile}, in which Ayliean MacDonald describes how to construct hitomezashi patterns and suggests extending the definition to the triangular grid (Pete also suggested a different extension to the triangular grid in \cite{Pete}). We refer the reader to \cite{HayesSeaton2, HayesSeaton} for further discussions of the cultural aspects of hitomezashi and to \cite{Holden} for other forms of mathematically interesting embroidery.

The bounded connected components of the union of the stitches in a hitomezashi pattern are called \dfn{hitomezashi loops}. Pete \cite{Pete} proved several structural results about the shapes of hitomezashi loops, including a correspondence between hitomezashi loops (modulo translation) and pairs of Dyck paths of the same height. One immediate consequence of this correspondence is that all hitomezashi loops have odd height and odd width. He also established several fascinating probabilistic results about random hitomezashi patterns. In \cite{defant kravitz}, the first two authors showed that every hitomezashi loop has length congruent to $4$ modulo $8$ and bounds a region with area congruent to $1$ modulo $4$. They also proved that in a random hitomezashi pattern (chosen according to a natural random model), the average area bounded by a hitomezashi loop is $12/(\pi^2-9)$.

In this paper, we study Hitomezashi patterns in further directions, which can be seen as motivated by their artistic origins. Textile artists work with materials, and questions of maximizing and minimizing the length of a loop and the area that it encloses (that is, optimizing measures on the loop) arise easily. Similarly, since stitch variation is common in textile arts (for example, cables and wrapped stitches in knitting, different warp and treadle arrangements in weaving), it is natural to study this in the context of Hitomezashi loops.

In Section~\ref{sec:optimization}, we resolve several optimization-style problems about hitomezashi loops. In each of these, we fix the width and height of the loop and then prove sharp bounds on its length and area.  We also characterize when equality is attained. The results on minimizing length (Theorem~\ref{thm:min-length}) and area (Theorem~\ref{thm:min-area}) are elegant and perhaps what one would expect. The result on maximizing length (Theorem~\ref{thm:max-length otherwise}) is substantially more complicated, and the result on maximizing area (Theorem~\ref{thm:max-area}) is easy both to state and to prove.

In Section~\ref{sec:long-square}, we study the following generalization of hitomezashi patterns in which we allow stitches of length greater than $1$: For positive integers $a, b, c, d$, we can consider sewing the horizontal stitches in an ``$a$-over-$b$-under'' pattern and sewing the vertical stitches in a ``$c$-over-$d$-under'' pattern.  For $u,v\in\mathbb R^2$, let $[u,v]$ denote the line segment that has $u$ and $v$ as its endpoints. For $i\in\mathbb Z$, let us label the horizontal grid line $y=i$ with a label $\varepsilon_i\in\mathbb Z/(a+b)\mathbb Z$; the horizontal stitches on this grid line are the line segments of the form $[(k,i),(k+a,i)]$ such that $k\equiv\varepsilon_i\pmod{a+b}$. Likewise, for $j\in\mathbb Z$, we label the vertical grid line $x=j$ with a label $\eta_j\in\mathbb Z/(c+d)\mathbb Z$; the vertical stitches on this grid line are the line segments of the form $[(j,k),(j,k+c)]$ such that $k\equiv\eta_j\pmod{c+d}$. We say the resulting arrangement of stitches is an \dfn{$(a,b,c,d)$-hitomezashi pattern} if every endpoint of a vertical stitch is the endpoint of a horizontal stitch and vice versa. See Figure~\ref{FigHitoB1} for an assignment of grid labels that satisfies this compatibility condition and an assignment of grid labels that violates it. A $(1,1,1,1)$-hitomezashi pattern is an ordinary hitomezashi pattern, and any assignment of grid labels will automatically satisfy the compatibility condition in this case.  

\begin{figure}[ht]
  \begin{center}\includegraphics[height=4.5cm]{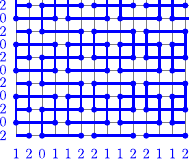}\qquad\qquad\qquad\qquad\includegraphics[height=4.5cm]{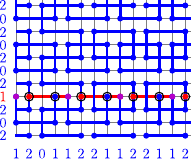}
  \end{center}
  \caption{On the left is a finite portion of a $(3,1,2,2)$-hitomezashi pattern. On the right is an assignment of grid labels that cannot be part of a $(3,1,2,2)$-hitomezashi pattern because it violates the compatibility condition; black circles indicate the violations. }\label{FigHitoB1}
\end{figure}

We will determine which quadruples $(a,b,c,d)$ can lead to $(a,b,c,d)$-hitomezashi patterns, and we will completely characterize such patterns  (Theorems~\ref{thm:classification-generic} and~\ref{thm:classification-a=b}).  One surprising consequence is that $4$-stitch rectangles are the only loops that can appear.  Another surprising consequence is (essentially) that the number of $(a,b,c,d)$-hitomezashi patterns is finite if and only if $a \neq b$ and $c \neq d$; in this case, we give an exact enumeration of the $(a,b,c,d)$-hitomezashi patterns. It is interesting to note that all such $(a,b,c,d)$-hitomezashi patterns are periodic in both the $x$- and $y$-directions.  One can view the $a=b$, $c \neq d$ case (equivalently, the $a \neq b$, $c=d$ case) as intermediate between the $a \neq b$, $c\neq d$ case, which is very rigid, and the $a=b=c=d$ case (normal hitomezashi), which has a great variety of patterns.

In Section~\ref{sec:long-triangular}, we consider the long-stitches variation on the triangular grid.  We focus on the case where each direction follows the $a$-over-$b$-under schema, and we find that (up to translation) there is only a single possible hitomezashi pattern (Theorem~\ref{thm:classification-triangular}).  This result again stands in contrast to ordinary triangular hitomezashi patterns, which exhibit great diversity and seem difficult to understand.

Finally, in Section~\ref{sec:further}, we present conjectures and open problems for future inquiry.

\section{Optimization on the Square Grid}\label{sec:optimization}

The hitomezashi loops of a given width and height can look quite different from one another, as demonstrated in Figure~\ref{FigHitoB2}. The purpose of this section is to determine how big or small the length and area of a hitomezashi loop can be if we fix the width and height beforehand.  We begin by setting up some notation and terminology.  The \dfn{longitude} (respectively, \dfn{latitude}) of a stitch is the $x$-coordinate (respectively, $y$-coordinate) of its midpoint. The \dfn{width} (respectively, \dfn{height}) of a hitomezashi loop $L$ is the maximum difference between the longitudes (respectively, latitudes) of the stitches in $L$. We say a vertical stitch in $L$ is \dfn{west-extremal} (respectively, \dfn{east-extremal}) if its longitude is minimal (respectively, maximal) among all stitches in $L$. We define \dfn{south-extremal} and \dfn{north-extremal} horizontal stitches similarly. Let $\length(L)$ and $\area(L)$ denote the length (i.e., number of stitches) of $L$ and the area of the region enclosed by $L$, respectively.  We can now state the following important results due to Pete.

\begin{figure}[ht]
 \centering
 \includegraphics[width=\linewidth]{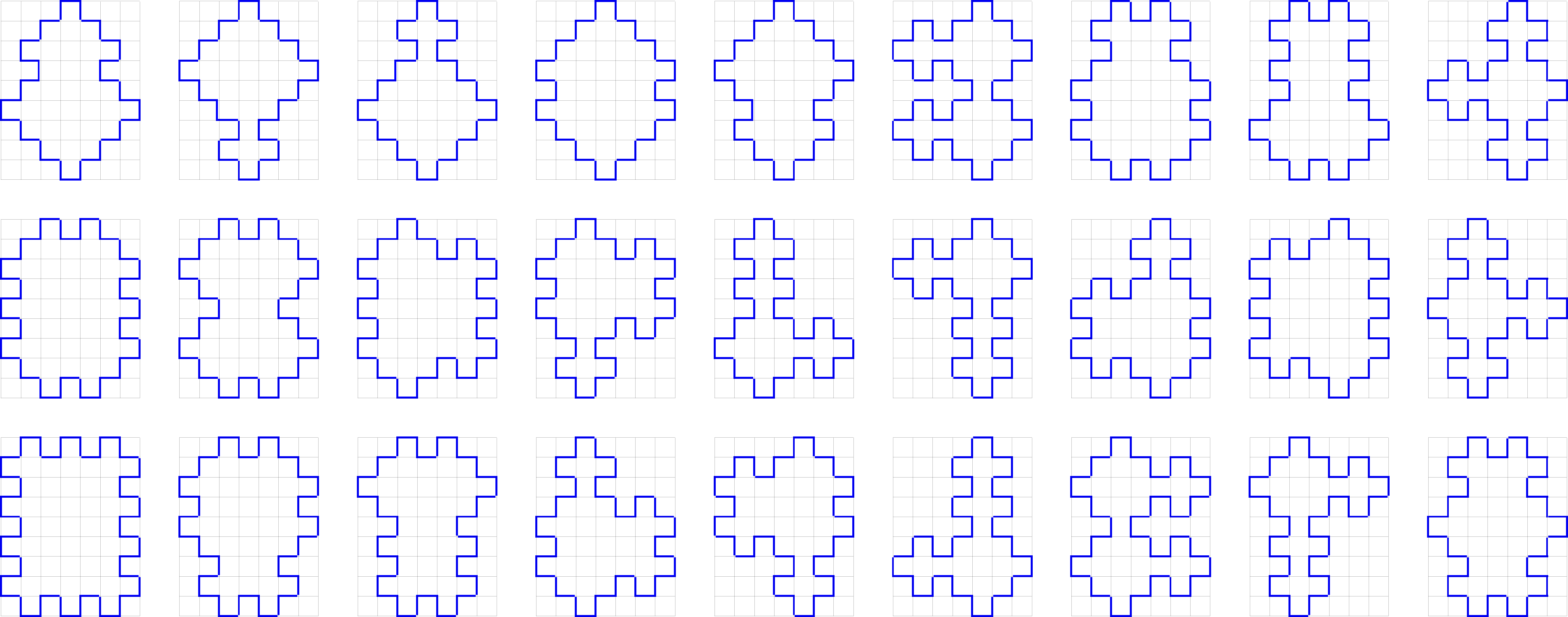}
  \caption{The $27$ hitomezashi loops having width $7$ and height $9$.}\label{FigHitoB2}
\end{figure}

\begin{theorem}[Pete, \cite{Pete}]\label{thm:Pete_odd}
Every hitomezashi loop has odd width and odd height. 
\end{theorem}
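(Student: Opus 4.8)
By reflecting the plane across the line $y=x$ (which interchanges horizontal and vertical stitches), it suffices to prove that the width is odd. The plan is to encode $L$ by the parity labels of its grid lines together with two interlocking closed lattice walks, record the handful of parity constraints forced by the compatibility of the stitches, and then extract the parity of the width from the structure of one of these walks.

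First I would recall the elementary structure of a hitomezashi pattern: along the horizontal grid line $y=i$ the horizontal stitches are exactly the segments $[(k,i),(k+1,i)]$ with $k\equiv\varepsilon_i\pmod 2$ for a fixed label $\varepsilon_i\in\mathbb Z/2\mathbb Z$, and along the vertical grid line $x=j$ the vertical stitches are exactly the segments $[(j,k),(j,k+1)]$ with $k\equiv\eta_j\pmod 2$ for a fixed label $\eta_j$ (this is the $a=b=c=d=1$ case of the data set up in Section~\ref{sec:long-square}). Since every lattice point is the endpoint of exactly one horizontal and one vertical stitch, every lattice point lying on $L$ has degree $2$ in $L$ with one horizontal and one vertical incident edge; hence $L$ is a single simple cycle whose edges alternate between horizontal and vertical. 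List the horizontal edges in cyclic order as $h_1,\dots,h_p$ and the vertical edges as $v_1,\dots,v_p$, interleaved so that the cyclic order of all edges of $L$ is $h_1,v_1,h_2,v_2,\dots,h_p,v_p$; say $h_s$ lies on $y=i_s$ and spans $x\in[l_s,l_s+1]$, while $v_s$ lies on $x=j_s$ and spans $y\in[k_s,k_s+1]$. Because $h_s$ is a unit segment joining a point of $v_{s-1}$ to a point of $v_s$ and these two points have different $x$-coordinates, we get $\{j_{s-1},j_s\}=\{l_s,l_s+1\}$, and symmetrically $\{i_s,i_{s+1}\}=\{k_s,k_s+1\}$. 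Thus $(j_s)_s$ and $(i_s)_s$ are closed walks on $\mathbb Z$ taking unit steps; in particular $p$ is even, the parity of $j_s$ alternates with $s$, and one checks that $\width(L)=\max_s j_s-\min_s j_s$ and $\height(L)=\max_s i_s-\min_s i_s$.

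Next I would read off the parity constraints. Because $h_s$ is a horizontal stitch on line $y=i_s$, its left endpoint has $x$-coordinate $l_s\equiv\varepsilon_{i_s}\pmod 2$, and since $l_s=\min(j_{s-1},j_s)$ this gives $\varepsilon_{i_s}\equiv\min(j_{s-1},j_s)\pmod 2$; symmetrically $\eta_{j_s}\equiv\min(i_s,i_{s+1})\pmod 2$, for every $s$. Applying the first relation at an index $s$ with $j_s=\min_t j_t$ (so that $j_{s-1}=j_{s+1}=\min_t j_t+1$) shows that $\varepsilon_{i_s}\equiv\varepsilon_{i_{s+1}}\equiv\min_t j_t\pmod 2$ on two consecutive rows, and applying it at an index with $j_s=\max_t j_t$ shows that $\varepsilon$ takes the value $\equiv\max_t j_t-1$ on two consecutive rows. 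The goal is then to conclude, using the planarity of $L$, that $\width(L)=\max_t j_t-\min_t j_t$ is odd.

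This last deduction is the main obstacle, and it genuinely requires that $L$ be a \emph{simple} closed curve: the walk and label constraints above can be met by closed walks of even range if one ignores the embedding. I would finish by analyzing the picture near the extremal columns of the enclosed region $R$. Writing $m=\min_t j_t$, the unit cells of $R$ in the leftmost column $m$ occur only at heights $i$ with $i\equiv\eta_m\pmod 2$ (their left edges lie on $\partial R=L$, hence are stitches on $x=m$), so no two of them are vertically adjacent, and for each such cell the left, top, and bottom edges all lie on $L$; if the right edge also lies on $L$ then $L$ is a single unit square and $\width(L)=1$, and otherwise the region extends one column further to the right at each such cell, which should allow one to reduce to smaller width. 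This is exactly the content of Pete's bijection \cite{Pete} between hitomezashi loops (up to translation) and pairs of Dyck paths of the same height, under which $\width(L)$ and $\height(L)$ each have the form $2k-1$ for a positive integer $k$ read off from one of the two paths, so both are odd; reconstructing the relevant half of that correspondence is the real crux, while all of the preceding steps are bookkeeping.
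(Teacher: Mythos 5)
Note first that the paper does not prove this statement at all: it is quoted as Theorem~\ref{thm:Pete_odd} from Pete's work \cite{Pete} (where it follows from his loop/Dyck-path correspondence), so the only question is whether your argument stands on its own. It does not yet: there is a genuine gap exactly where you say the ``real crux'' lies. Your bookkeeping is fine up to the two parity facts it produces, namely that $\varepsilon_i\equiv\min_t j_t\pmod 2$ on two consecutive latitudes adjacent to a west-extremal column and $\varepsilon_{i'}\equiv\max_t j_t-1\pmod 2$ on two consecutive latitudes adjacent to an east-extremal column. But to conclude that $\width(L)=\max_t j_t-\min_t j_t$ is odd you need these two congruences to hold at a \emph{common} latitude, i.e.\ you need a latitude carrying both a west-extremal and an east-extremal vertical stitch. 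That is precisely the content of Theorem~\ref{thm:Pete_extremal}, which is again Pete's result; your proposal neither proves it nor replaces it, and invoking Pete's Dyck-path bijection at the end is circular for the purpose of this exercise, since odd width and height are immediate corollaries of that bijection.

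The sketched alternative finish is also not a proof as stated. From the leftmost column of cells of the enclosed region you observe that the qualifying cells sit at latitudes of a fixed parity and are not vertically adjacent, and you then assert that otherwise ``the region extends one column further to the right at each such cell, which should allow one to reduce to smaller width.'' No induction is actually set up: deleting the leftmost column of cells (or of stitches) from a hitomezashi loop does not obviously yield another hitomezashi loop, because the stitch positions are constrained globally along each grid line by the labels $\varepsilon_i,\eta_j$, and the new western boundary need not consist of stitches compatible with any labeling. Making this peeling argument rigorous is essentially reconstructing Pete's correspondence, which is the step you have left out. To repair the proof you should either (i) prove the pairing of west- and east-extremal stitches at each extremal latitude directly (e.g.\ by following the loop from a west-extremal stitch and using the forced eastward/westward turns dictated by the labels, which is how Theorem~\ref{thm:Pete_extremal} is obtained), and then your two congruences at a shared latitude give $\max_t j_t-\min_t j_t\equiv 1\pmod 2$ immediately; or (ii) carry out the column-peeling induction in full, specifying exactly what object of width $w-2$ you obtain and why it is again a hitomezashi loop.
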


\begin{theorem}[Pete, \cite{Pete}]\label{thm:Pete_extremal}
A hitomezashi loop $L$ has a west-extremal vertical stitch at latitude $y$ if and only if it has an east-extremal vertical stitch at latitude $y$. If $L$ has west-extremal and east-extremal stitches at latitude $y$, then these are the only two (vertical) stitches of $L$ at latitude $y$. The analogous statements hold for horizontal stitches.  
\end{theorem}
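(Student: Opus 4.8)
The plan is to translate the two assertions into statements about how $L$ crosses horizontal lines, and then to prove those statements by sweeping across the horizontal grid lines one at a time. First I record the elementary structure we will use. Viewing the stitches as the edges of a graph on the lattice points, every vertex has degree exactly $2$, so each hitomezashi loop $L$ is a simple rectilinear polygon whose edges alternate between horizontal and vertical stitches; in particular $\length(L)$ is even. On each horizontal line $y=i$ the stitches of the pattern are the segments $[(k,i),(k+1,i)]$ with $k$ of a fixed parity $\varepsilon_i\in\{0,1\}$, and on each vertical line $x=j$ they are the segments $[(j,k),(j,k+1)]$ with $k$ of a fixed parity $\eta_j\in\{0,1\}$; from this one reads off the \emph{local rule}: the horizontal stitch through a lattice point $(p,i)$ points to the right if and only if $p\equiv\varepsilon_i\pmod2$, and the vertical stitch through $(p,i)$ points upward if and only if $i\equiv\eta_p\pmod2$. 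Write $w$ and $e$ for the least and greatest longitudes of the vertical stitches of $L$; since both endpoints of each horizontal stitch of $L$ are loop vertices carrying vertical stitches, every horizontal stitch has longitude strictly between $w$ and $e$, so $e-w$ is the width of $L$ and is odd by Theorem~\ref{thm:Pete_odd}. If $s$ is a west-extremal vertical stitch, its two endpoints lie in column $w$, and minimality of $w$ forces the horizontal stitch at each endpoint to point to the right; by the local rule, the two horizontal grid lines met by $s$ then both carry parity label $\equiv w\pmod2$. Symmetrically, maximality of $e$ forces the horizontal stitches at the endpoints of an east-extremal vertical stitch to point to the left, so the two horizontal grid lines it meets carry parity label $\equiv e+1\equiv w\pmod2$.

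Now fix a half-integer $y_0$. The line $y=y_0$ avoids all lattice points and all horizontal stitches, so it meets $L$ transversally, and its intersection points with $L$ are exactly the midpoints of the vertical stitches of $L$ at latitude $y_0$. Since $L$ is a Jordan curve, there is an even number $2m$ of these, occurring at columns $c_1<\cdots<c_{2m}$, and along the line the open intervals $(c_1,c_2),(c_3,c_4),\ldots$ lie in the interior of $L$ while $(-\infty,c_1),(c_2,c_3),\ldots,(c_{2m},\infty)$ lie in its exterior. There is a west-extremal vertical stitch at latitude $y_0$ precisely when $c_1=w$, and an east-extremal one precisely when $c_{2m}=e$. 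Thus the two assertions of the theorem reduce to the following two facts: (a) $c_1=w$ if and only if $c_{2m}=e$; and (b) if $c_1=w$ and $c_{2m}=e$ then $m=1$. (The assertions for horizontal stitches are obtained by interchanging the two coordinate axes, so it is enough to establish (a) and (b).)

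Facts (a) and (b) are genuinely global, and establishing them is the crux of the argument — the step I expect to require the most work. A local analysis stalls: using the local rule and the fact that once $c_1=w$ the grid lines $y=y_0\pm\tfrac12$ carry parity label $\equiv w\pmod2$, one sees that $L$ forms a rightward-opening or leftward-opening hairpin at the crossing $c_j$ according as $c_j\equiv w\pmod2$ or not, but which side of such a hairpin the interior of $L$ lies on — hence the parity of $c_j$ — is not determined by the picture near $c_j$; moreover a loop may have several ``fingers'' reaching the wall $x=w$ with the arcs connecting them never coming near $x=e$, so there is no naive pairing of west-extremal with east-extremal stitches. The route I would take is to sweep across the horizontal grid lines, comparing the crossing pattern of $y=y_0$ with that of $y=y_0+1$ by decomposing $L\cap\{\,y_0<y<y_0+1\,\}$ into its connected arcs. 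Each such arc consists of the upper half of a vertical stitch rising from the line $y=y_0$, then a single horizontal stitch on the row $y_0+\tfrac12$, then the lower half of a vertical stitch that either returns to $y=y_0$ or continues on to $y=y_0+1$; hence the crossings of the two lines are matched by a ``brick-wall'' transfer rule under which each matched pair occupies horizontally adjacent columns. Iterating this rule from the bottom row of $L$ to its top row reconstructs the full combinatorial skeleton of $L$ and, in particular, ties the evolution of its leftmost crossing to that of its rightmost crossing; from this coupling one should be able to deduce (a) and (b). This sweep is, in essence, the construction behind Pete's correspondence between hitomezashi loops (modulo translation) and pairs of Dyck paths of the same height \cite{Pete}, which one may alternatively invoke directly.
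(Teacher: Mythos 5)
Your reduction is fine as far as it goes: the observation that every horizontal stitch of $L$ has longitude strictly between the extreme vertical longitudes $w$ and $e$, the parity consequences of the local rule at extremal stitches, and the restatement of the theorem as facts (a) ($c_1=w$ iff $c_{2m}=e$) and (b) (if both hold then $m=1$) for each half-integer latitude are all correct. But the proof stops exactly at the point you yourself identify as the crux. Facts (a) and (b) are never established: the ``brick-wall'' transfer between the crossing sets of $y=y_0$ and $y=y_0+1$ is only described (and incompletely --- you omit the arcs whose two ends both lie on the upper line, i.e.\ caps hanging from $y=y_0+1$, which are exactly the moves that can destroy crossings as one sweeps upward), and the decisive step --- showing that under this row-to-row transfer the leftmost crossing sits at column $w$ precisely when the rightmost sits at column $e$, and that in that event there are no other crossings --- is replaced by ``from this coupling one should be able to deduce (a) and (b).'' Nothing in the sketch explains why the left and right ends of the crossing pattern are coupled at all; indeed your own remark that a loop can have several ``fingers'' reaching the wall $x=w$ shows that some nontrivial invariant of the sweep is needed, and none is proposed. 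So as written this is a plan, not a proof.

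The fallback of invoking Pete's loop--Dyck-path correspondence directly does not close the gap either: that correspondence is Pete's structural theorem, proved in \cite{Pete} by exactly the kind of analysis that contains Theorem~\ref{thm:Pete_extremal} (and Theorem~\ref{thm:Pete_odd}, which you also use for the parity $e\equiv w+1\pmod 2$), so citing it here is not an independent argument but a restatement that the result is in \cite{Pete}. That is in fact all the present paper does --- Theorem~\ref{thm:Pete_extremal} is quoted from \cite{Pete} without proof --- so there is no in-paper argument for you to have rediscovered; but judged as a self-contained proof, your write-up has a genuine missing step, namely the actual derivation of (a) and (b) from the sweep.
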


Theorem~\ref{thm:Pete_extremal} motivates us to define an \dfn{extremal latitude} (respectively, \dfn{extremal longitude}) of a hitomezashi loop $L$ to be a latitude (respectively, longitude) at which $L$ has west-extremal and east-extremal vertical stitches (respectively, south-extremal and north-extremal horizontal stitches).

To describe the equality cases for several of our results in this section, we will need names for some distinguished types of hitomezashi loops. First, for odd positive integers $w$ and $h$, consider the hitomezashi pattern in the rectangular region $[0,w]\times [0,h]$ having the following grid labels: we put $\varepsilon_0=\varepsilon_h=\eta_0=\eta_w=1$ and set all other grid labels equal to $0$. This hitomezashi pattern has one loop of width $w$ and height $h$; we define the $w\times h$ \dfn{rug}, denoted $\Rug_{w\times h}$, to be this hitomezashi loop, considered modulo translation. The image on the left of Figure~\ref{fig:rug} shows $\Rug_{11\times 13}$.

\begin{figure}[ht]
  \begin{center}
  \includegraphics[height=5cm]{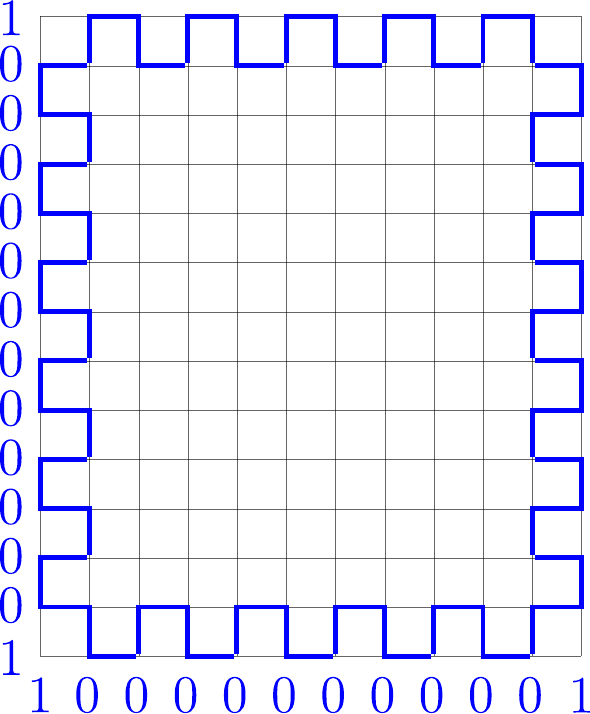}\qquad\qquad\includegraphics[height=5cm]{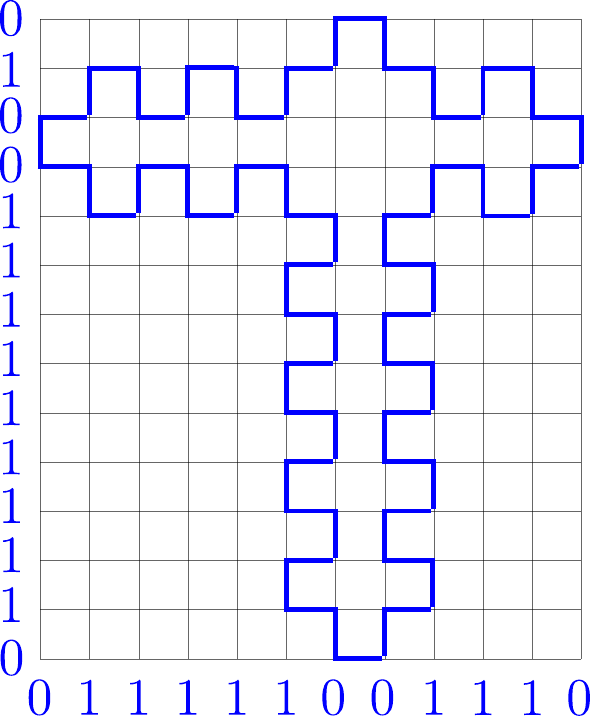}\qquad\qquad\includegraphics[height=5cm]{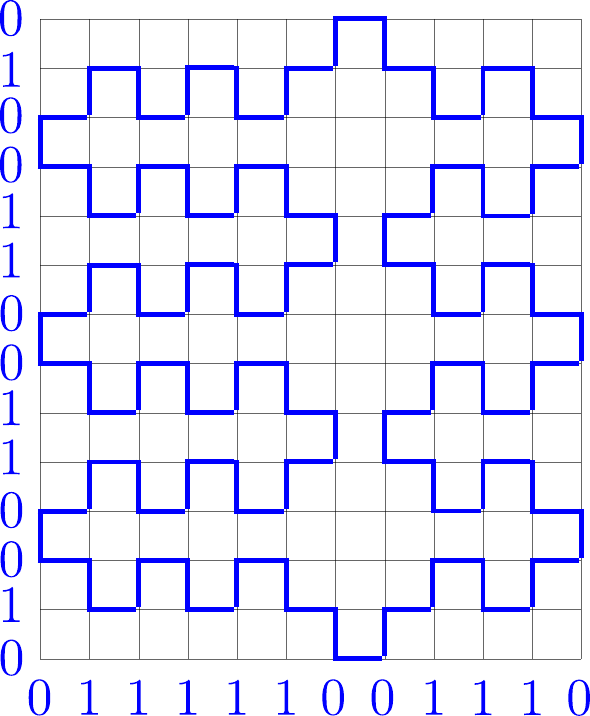}
  \end{center}
  \caption{On the left is the rug $\Rug_{11\times 13}$. In the middle is the cross $\Cross^{7,11}_{11\times 13}$. On the right is the horizontal comb $\HComb_{11\times 13}^7$. }\label{fig:rug}
\end{figure}

Second, suppose $w,h\geq 5$ are odd, and choose odd integers $3\leq\alpha\leq w-2$ and $3\leq \beta\leq h-2$. Consider the hitomezashi pattern in the rectangular region $[0,w] \times [0,h]$ having the following grid labels: we put $\varepsilon_{0}=\varepsilon_{h}=\varepsilon_{\beta-1}=\varepsilon_{\beta}=\eta_0=\eta_w=\eta_{\alpha-1}=\eta_{\alpha}=0$ and set all other grid labels equal to $1$.  This hitomezashi pattern has one loop of width $w$ and height $h$; we define the \dfn{cross} $\Cross^{\alpha,\beta}_{w\times h}$ to be this hitomezashi loop, considered modulo translation.  We remark that a cross can also be viewed as the boundary of the region formed by the union of a $\Rug_{w \times 3}$ and a $\Rug_{3 \times h}$, as seen in the middle of Figure~\ref{fig:rug}.

Now suppose $w,h \geq 5$ are odd and $h \equiv 1 \pmod{4}$, and choose an odd integer $3 \leq \alpha \leq w-2$.  Consider the hitomezashi pattern in the rectangular region $[0,w] \times [0,h]$ having the following grid labels: we put $\varepsilon_0=\varepsilon_h=0$, and for the remaining horizontal grid labels, we let $\varepsilon_i$ equal $0$ if $i\equiv 2,3 \pmod{4}$ and equal $1$ if $i \equiv 0,1 \pmod{4}$; we put $\eta_0=\eta_{\alpha-1}=\eta_{\alpha}=\eta_w=0$ and let all other vertical grid labels equal $1$.  This hitomezashi pattern has one loop of width $w$ and height $h$; we define the \dfn{horizontal comb} $\HComb^{\alpha}_{w\times h}$ to be this hitomezashi loop, considered modulo translation. The image on the right of Figure~\ref{fig:rug} shows $\HComb_{11\times 13}^7$. If $w \equiv 1 \pmod{4}$ and $3 \leq \beta \leq h-2$ is odd, then we can define the \dfn{vertical comb} $\VComb^{\beta}_{w\times h}$ analogously.  We remark that a horizontal comb can also be viewed as the boundary of the region formed by the union of a $\Rug_{3 \times h}$ and several copies of $\Rug_{w \times 3}$; a similar statement holds for vertical combs.

Finally, we introduce \dfn{wands} when $\min\{w,h\} = 5$. A wand of width $5$ has $\eta_1=\eta_4 = 1$, $\eta_0=\eta_2=\eta_3=\eta_5=0$, $\varepsilon_0 = \varepsilon_h=0$, $\varepsilon_1= \varepsilon_{h-1} = 1$, and $\varepsilon_{2i} = \varepsilon_{2i+1}$ for all $i \in [1,(h-3)/2]$, with the requirement that $\varepsilon_{2i}=0$ for at least one $i \in [1,(h-3)/2]$. A wand of height $5$ is a $90^{\circ}$ rotation of a wand of width $5$. Note that a comb with $\min\{w,h\} = 5$ is a type of wand. See Figure~\ref{fig:wand}.

\begin{figure}[htbp]
\includegraphics[height=6.482cm]{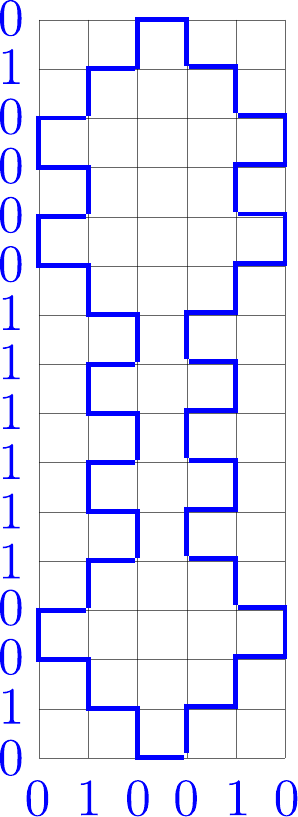}
\caption{A wand of width $5$ and height $15$.}
\label{fig:wand}
\end{figure}

We omit the easy proof of the following proposition, which states only the lengths and areas that we will need later.

\begin{proposition}\label{ex:rug length}
We have 
\begin{align*}
    \length(\Rug_{w\times h})&=4(w+h-3)\\
    \area(\Rug_{w\times h})&=(w-1)(h-1)+1,\\
    \area(\Cross_{w\times h}^{\alpha,\beta})&=2(w+h)-7,\\
    \length(\HComb_{w\times h}^\alpha)&=\length(\VComb_{w\times h}^\beta)=(w-1)(h-1)+4,
\end{align*}
and the length of a wand of width $5$ and height $h$ is $4h$.
\end{proposition}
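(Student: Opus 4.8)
The plan is to handle the five distinguished loops one at a time, in each case first reading off the combinatorial shape of the loop from the explicit grid labels and then extracting its length and area. The single fact that does all the work is the following: since every lattice point is the endpoint of exactly one horizontal stitch and exactly one vertical stitch, a hitomezashi loop must alternate between horizontal and vertical stitches as it is traversed. In particular it turns at every vertex, and its length equals twice the number of its horizontal stitches (equivalently, twice the number of its vertical stitches). Together with Theorems~\ref{thm:Pete_odd} and~\ref{thm:Pete_extremal}, this reduces each assertion to a short (if slightly tedious) traversal.

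For the rug, I would trace the loop directly, starting from a stitch near a corner. Using the labels $\varepsilon_0=\varepsilon_h=\eta_0=\eta_w=1$ and $0$ elsewhere, one finds that the loop is a closed ``zigzag band'' that climbs the west side in a staircase, crosses the north side in a staircase, descends the east side, and returns along the south side, turning at every vertex. (Equivalently, one can induct on $w+h$, since increasing $w$ or $h$ by $2$ merely lengthens one opposing pair of staircases.) Counting the stitches along the four staircases gives $\length(\Rug_{w\times h})=4(w+h-3)$, and applying the shoelace formula to the resulting completely explicit list of vertices — or, just as easily, decomposing the enclosed region into unit cells and counting them — gives $\area(\Rug_{w\times h})=(w-1)(h-1)+1$.

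For the cross, I would invoke the description recorded in the text: $\Cross^{\alpha,\beta}_{w\times h}$ bounds the union of the region enclosed by a copy of $\Rug_{w\times 3}$ (a horizontal bar) and the region enclosed by a copy of $\Rug_{3\times h}$ (a vertical bar), positioned so that they overlap in a central $3\times 3$ square. Reading the grid labels off on that central square shows the overlap is exactly a translate of the region enclosed by $\Rug_{3\times 3}$, which has area $5$ by the rug formula just proved. Inclusion–exclusion then yields
\[
\area(\Cross^{\alpha,\beta}_{w\times h})=\area(\Rug_{w\times 3})+\area(\Rug_{3\times h})-5=(2w-1)+(2h-1)-5=2(w+h)-7.
\]

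For the horizontal comb, the congruence conditions on the $\varepsilon_i$ fix the positions of the ``teeth'' (copies of $\Rug_{w\times 3}$) hanging off the vertical spine (a copy of $\Rug_{3\times h}$), and I would again trace the loop: it runs up one side of the spine, makes an excursion out along each tooth and back, and returns down the other side; tallying the stitches gives $\length(\HComb^\alpha_{w\times h})=(w-1)(h-1)+4$. The identity $\length(\VComb^\beta_{w\times h})=\length(\HComb^\alpha_{w\times h})$ is then immediate, since a vertical comb is a $90^{\circ}$ rotation of a horizontal comb with the roles of $w$ and $h$ exchanged. Finally, for a wand of width $5$ and height $h$, I would trace the loop along the six columns $x=0,\dots,5$, where the horizontal stitches appear in a predictable pattern row by row; the total comes out to $4h$ no matter which of the free labels $\varepsilon_{2i}$ are set to $0$. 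Verifying this last point — that the various reroutings of the loop produced by different choices of the free labels all preserve the stitch count — is the one genuinely computational step and the main (still quite modest) obstacle in the proof; everything else follows immediately from the alternation property and the rug computation.
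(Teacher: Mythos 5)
The paper gives no argument for this proposition at all (it is explicitly omitted as ``easy''), so the relevant question is simply whether your direct verification is sound. Your overall plan --- trace each distinguished loop from its grid labels, use the horizontal/vertical alternation to count stitches, and count enclosed unit cells for areas --- is exactly the kind of computation the authors had in mind, and the rug, comb, and wand parts are fine: the four-staircase count gives $2\bigl(2(h-2)+1\bigr)+2\bigl(2(w-2)+1\bigr)=4(w+h-3)$, the cell count for the rug gives $(w-1)(h-1)+1$, the comb trace gives $(w-1)(h-1)+4$ (I checked, e.g., $w=7$, $h=5$, $\alpha=3$, which has $28$ stitches), and the wand count $4h$ is indeed independent of the free labels $\varepsilon_{2i}$, matching $(5-1)(h-1)+4$.

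The one genuine misstep is in the cross computation. The intersection of the horizontal bar (the $\Rug_{w\times 3}$ region) and the vertical bar (the $\Rug_{3\times h}$ region) is \emph{not} a translate of the region enclosed by $\Rug_{3\times 3}$. Because $\alpha$ is odd, the column $[\alpha-1,\alpha]$ occupied by the vertical bar's full middle column has the opposite parity from the columns at which the horizontal bar's top and bottom rows dip; so the cell directly below (and directly above) the central cell lies in the vertical bar but not the horizontal bar, and symmetrically the cells directly west and east of the central cell lie in the horizontal bar but not the vertical bar. What does lie in both bars is the central cell together with its four \emph{diagonally} adjacent cells (the dips of the two bars coincide there). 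For instance, for $w=h=5$, $\alpha=\beta=3$, the overlap consists of the cells centered at $(2.5,2.5)$, $(1.5,1.5)$, $(3.5,1.5)$, $(1.5,3.5)$, $(3.5,3.5)$ --- a quincunx, not the plus-shaped $\Rug_{3\times3}$ region. Fortunately this overlap still has area $5$, so your inclusion--exclusion
\[
\area(\Cross^{\alpha,\beta}_{w\times h})=(2w-1)+(2h-1)-5=2(w+h)-7
\]
is numerically correct; you just need to replace the claimed identification of the overlap with an honest count of these five cells (or, alternatively, sum the areas of the cross region row by row: $1,3,\dots$ on the arms and $w$ resp.\ $h$ on the two full bars).
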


We now proceed to our minimization problems.

\begin{theorem}
\label{thm:min-length}
Let $w,h \geq 1$ be odd integers.  If $L$ is a hitomezashi loop with width $w$ and height $h$, then 
\begin{equation*}
    \length(L) \ge 4\max\{w,h\}.
\end{equation*}
If $w \ge h$ (respectively, $w\leq h$), then equality is achieved if and only if each horizontal (respectively, vertical) stitch in $L$ has the same longitude (respectively, latitude) as exactly one other stitch in $L$. 
\end{theorem}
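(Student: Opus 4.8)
The plan is to reduce the entire statement to a single projection (crossing-number) count. I would first record two structural facts about an arbitrary hitomezashi loop $L$. First, $L$ is a simple closed polygonal curve: every lattice point lying on $L$ is the endpoint of exactly one horizontal stitch and exactly one vertical stitch of the ambient pattern, so it has degree $2$ in $L$. Second, as one traverses $L$ the stitches strictly alternate between horizontal and vertical, because at each lattice point the loop passes from its unique horizontal stitch to its unique vertical stitch (or vice versa); consequently $L$ has equally many horizontal and vertical stitches, and $\length(L) = 2m$, where $m$ denotes the number of horizontal stitches of $L$.

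By the symmetry exchanging the two coordinate axes, it suffices to handle the case $w \ge h$, so that $\max\{w,h\} = w$. After translating $L$, I may assume it lies in $[0,w]\times[0,h]$ with its west-extremal vertical stitches on the line $x=0$ and its east-extremal vertical stitches on the line $x=w$ (that such stitches exist, and that they pin down the width, is part of Theorem~\ref{thm:Pete_extremal} together with the definitions preceding it). Then every horizontal stitch of $L$ has longitude in the $w$-element set $\{1/2, 3/2, \dots, w-1/2\}$. Fix such a longitude $c$. The vertical line $x=c$ meets $L$ only at the midpoints of the horizontal stitches of $L$ of longitude $c$, and since $c$ is not an integer no vertical stitch of $L$ lies on this line, so each such meeting is a transversal crossing. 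As $L$ is a closed curve containing a point with $x<c$ (on a west-extremal stitch) and a point with $x>c$ (on an east-extremal stitch), it must cross $x=c$; orienting $L$ and pairing left-to-right crossings with right-to-left crossings shows the number of crossings is even, hence at least $2$. Therefore $L$ has at least two horizontal stitches of longitude $c$. Summing over the $w$ admissible longitudes gives $m \ge 2w$, whence $\length(L) = 2m \ge 4w = 4\max\{w,h\}$.

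For the equality clause, I would observe that $\length(L) = 4w$ holds exactly when the bound ``at least $2$'' is met with equality at each of the $w$ longitudes, i.e., exactly when $L$ has precisely two horizontal stitches of every longitude in $\{1/2,\dots,w-1/2\}$. Because horizontal stitches have half-integer longitudes while vertical stitches have integer longitudes, this condition says precisely that each horizontal stitch of $L$ shares its longitude with exactly one other stitch of $L$ — which is the stated characterization when $w \ge h$ (and the $w \le h$ case is its mirror image). When $w=h$ I would also check that the horizontal-stitch and vertical-stitch versions of the condition coincide: if each horizontal stitch has a unique longitude-partner then $L$ has $2w=2h$ vertical stitches, and rerunning the crossing argument along horizontal lines forces exactly two vertical stitches at each of the $h$ latitudes.

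The argument is almost entirely bookkeeping, and I expect the only delicate point to be the crossing count in the middle step: verifying that $L \cap \{x=c\}$ is exactly the set of midpoints of the horizontal stitches of longitude $c$, that each of these is a genuine transversal crossing (so that no tangency corrupts the parity argument), and that there is at least one — hence, by parity, at least two — of them. Everything else follows from the alternation of horizontal and vertical stitches and the normalization of $L$ inside $[0,w]\times[0,h]$.
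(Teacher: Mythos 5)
Your proposal is correct and follows essentially the same route as the paper: count horizontal stitches at each of the $w$ half-integer longitudes by noting that the closed loop must cross each such vertical line at least twice (the paper phrases this via a counterclockwise orientation forcing one west-to-east and one east-to-west stitch per longitude), then use the horizontal/vertical alternation to double the count, with equality exactly when each longitude carries two horizontal stitches. Your extra care about transversality, the parity pairing, and the consistency of the two characterizations when $w=h$ just makes explicit what the paper leaves implicit.
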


\begin{proof}
Without loss of generality, we may assume that $w \geq h$.  Orient the loop $L$ counterclockwise, so that each stitch receives an orientation.  Then $L$ contains at least one stitch oriented west-to-east and one stitch oriented east-to-west at each of the $w$ half-integer longitudes passing through the interior of $L$, so $L$ has at least $2w$ horizontal stitches.  Since we alternately pass through horizontal and vertical stitches as we traverse $L$, we conclude that $\length(L) \geq 4w$.

We achieve the equality $\length(L)=4w$ if and only if $L$ has exactly one stitch oriented west-to-east and exactly one stitch oriented east-to-west at each of the $w$ half-integer longitudes passing through the interior of $L$, i.e., there are exactly two horizontal stitches at each such longitude.
\end{proof}

We remark that the loops achieving equality in the above theorem (for $w \geq h$) correspond to Dyck paths of semilength $(w-1)/2$ (by reading out the sequence of north and south vertical steps as one traverses the loop counterclockwise from left-extremum to right-extremum).

We can also use Theorem~\ref{thm:Pete_extremal} to give a sharp lower bound on the area of a hitomezashi loop. In this setting, the $1\times 1$ loop is so small that it behaves differently, so we require loops to have both width and height at least $3$.

\begin{theorem}
\label{thm:min-area}
Let $w,h \geq 3$ be odd integers.  If $L$ is a hitomezashi loop with width $w$ and height $h$, then 
\begin{equation*}
    \area(L) \ge 2(w+h) - 7.
\end{equation*}
Moreover, equality is achieved if and only if either $L$ is a cross or $\min\{w,h\} = 3$.
\end{theorem}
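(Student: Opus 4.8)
The plan is to prove the lower bound $\area(L) \ge 2(w+h)-7$ by exhibiting the enclosed region as containing two "arms" of known total area, and then to analyze the equality case by tracking exactly how much overlap those arms can have. Since $\area(\Cross_{w\times h}^{\alpha,\beta}) = 2(w+h)-7$ by Proposition~\ref{ex:rug length}, the bound says that a cross is (one of the) smallest-area loops of given width and height, which makes the shape of the argument clear: a general loop must "contain" something cross-like.

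First I would use Theorem~\ref{thm:Pete_extremal} to set up coordinates: let the extremal latitudes be at heights $0$ and $h$ (so there are west- and east-extremal vertical stitches at each of these latitudes, and only those two vertical stitches there), and similarly let the extremal longitudes be at $0$ and $w$. By Pete's loop structure (and the Dyck-path correspondence mentioned in the excerpt), as we traverse $L$ counterclockwise we pass through a west-extremal vertical stitch, travel east along the bottom, and so on; the key geometric input is that at each half-integer longitude strictly between $0$ and $w$ the loop crosses an even number of times but at least twice (this is essentially the content of the length proof of Theorem~\ref{thm:min-length}), and symmetrically for latitudes. From this I would argue that the interior of $L$ contains the full horizontal "band" $(0,w)\times(y_0, y_0+1)$ at the height $y_0$ of a west-extremal/east-extremal pair, contributing area $w$ (here I use that a west-extremal vertical stitch and an east-extremal vertical stitch at the same latitude must be joined through the interior, since the loop is connected and those are the only stitches at that latitude). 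Likewise the interior contains a vertical band of area $h$ at an extremal longitude. These two bands overlap in at most a $1\times 1$ square, so $\area(L) \ge w + h - 1$ — but this is weaker than what we want, so the band must be widened.

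The real argument strengthens "band of width $1$" to "band of width $3$": at an extremal latitude $y_0$, the two vertical stitches there occupy the segments $\{0\}\times[y_0-1,y_0+1]$ ... actually $\{x_{\min}\}\times[\,\cdot\,]$ and $\{x_{\max}\}\times[\,\cdot\,]$, and because each of them is the unique vertical stitch at that latitude, the loop's horizontal stitches immediately inside must connect across, forcing the region $(0,w)\times(y_0-1,y_0+1)$ (area $2w$, once we know this latitude is not itself extremal in the degenerate $h=3$ sense) to lie in the interior; symmetrically we get a vertical strip of area $2h$. Subtracting the overlap — which is contained in a $3\times 3$ square but, more carefully, in a plus-shaped region of area $7$ (three unit squares in each direction sharing the central one, since the strips have width $2$ not $3$... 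I will need to pin down the exact overlap region to land on $7$) — yields $\area(L) \ge 2w + 2h - 7$. I expect this overlap bookkeeping to be the main obstacle: getting the constant exactly right, and handling the boundary case $\min\{w,h\}=3$ where the "latitude strip" of width $2$ coincides with the whole loop and the double-counting collapses, which is precisely why equality holds for \emph{all} loops with $\min\{w,h\}=3$.

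For the equality characterization, suppose $\min\{w,h\}\ge 5$ and $\area(L)=2(w+h)-7$. Then every inequality above is tight: there are exactly two horizontal stitches at each interior latitude (so $L$ has minimal length $4w$ in the $w\ge h$ case, hence is "thin" in the sense of Theorem~\ref{thm:min-length}) \emph{and} exactly two vertical stitches at each interior longitude, and moreover the interior is \emph{exactly} the union of the width-$w$ horizontal strip of height $2$ and the width-$2$ vertical strip of height $h$ with no further cells. A loop whose interior is exactly a (fattened) plus-sign of this type is, by the remark in the excerpt that a cross is the boundary of $\Rug_{w\times 3}\cup\Rug_{3\times h}$, forced to be some $\Cross_{w\times h}^{\alpha,\beta}$ once we determine where the vertical strip sits horizontally and the horizontal strip sits vertically (these positions are the parameters $\alpha,\beta$, which must be odd and in the stated ranges because extremal latitudes/longitudes of hitomezashi loops have the right parity by Theorem~\ref{thm:Pete_odd} applied to sub-structure, or directly from Pete's Dyck-path description). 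Conversely, if $\min\{w,h\}=3$ then the first strip already accounts for the entire loop and the computation in Proposition~\ref{ex:rug length} (noting $\Rug_{w\times 3}$ has area $2w-1 = 2(w+3)-7$) gives equality automatically; and a cross always attains equality by Proposition~\ref{ex:rug length}. I would close by double-checking the two families agree when $\min\{w,h\}=3$, i.e., that a cross degenerates to a rug there, so the "or" in the statement is not exclusive.
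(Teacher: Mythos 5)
Your overall strategy---force a cross-shaped region inside the interior of $L$ and account for the overlap of its two arms---is the same idea as the paper's proof, but the concrete containment you rely on is false, and the flaw is not the ``overlap bookkeeping'' you defer. You claim that at an extremal latitude $y_0$ the interior of $L$ must contain the full rectangle $(0,w)\times(y_0-1,y_0+1)$ of area $2w$ (and symmetrically a full $2\times h$ rectangle at an extremal longitude). The cross itself is a counterexample: the horizontal arm of $\Cross^{\alpha,\beta}_{w\times h}$ is the jagged interior of a $\Rug_{w\times 3}$, of area $2w-1$, and it omits every other unit cell in the rows just above and below the central band, so no height-$2$ rectangle of width $w$ fits inside it. Indeed, if your claim were true, then (since two full-width strips of thickness $2$ overlap in area at most $4$) you would conclude $\area(L)\ge 2(w+h)-4$ for every loop with $\min\{w,h\}\ge 5$, contradicting the fact that crosses have area exactly $2(w+h)-7$. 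What is actually forced inside $L$ is only the height-$1$ band of area $w$ (your first, correct, observation) together with \emph{alternating} cells above and below it, i.e.\ a rug-shaped arm of area $2w-1$; the two arms then overlap in the interior of a $3\times 3$ rug, of area $5$, giving $(2w-1)+(2h-1)-5=2(w+h)-7$. The same error propagates into your equality analysis: the interior of a loop attaining equality is \emph{not} ``exactly the union of a $w\times 2$ strip and a $2\times h$ strip,'' so the identification of $L$ with a cross cannot be run the way you describe.

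The paper's proof repairs exactly this point in a cleaner way: fix one south-extremal stitch $s$ and one west-extremal stitch $t$ of $L$, let $C$ be the cross $\Cross^{\alpha,\beta}_{w\times h}$ whose unique south-extremal stitch is $s$ and whose unique west-extremal stitch is $t$, and use Theorem~\ref{thm:Pete_extremal} to show that the interior of $C$ (the jagged arms, not rectangular strips) is contained in the interior of $L$; then $\area(L)\ge\area(C)=2(w+h)-7$ by Proposition~\ref{ex:rug length}, with equality precisely when $L=C$, i.e.\ $L$ is a cross. The $\min\{w,h\}=3$ case is handled separately by noting that every such loop is a rug of area $2(w+h)-7$. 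To salvage your write-up you would need to replace your rectangular-strip claim with this rug-arm containment (or an equivalent cell-by-cell argument using Theorem~\ref{thm:Pete_extremal}), and rebuild the equality case on top of it.
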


\begin{proof}
Every loop with $\min\{w,h\}=3$ is a rug and hence has area $2(w+h)-7$ by Proposition~\ref{ex:rug length}.  We henceforth restrict our attention to the case $\min\{w,h\}\geq 5$. Without loss of generality, suppose $L$ is contained in the region  $[0,w] \times [0,h]$. Fix a south-extremal stitch $s$ and a west-extremal stitch $t$ of $L$. Let $\alpha-1/2$ (respectively, $\beta-1/2$) be the longitude (respectively, latitude) of $s$ (respectively, $t$). Let $C$ be the cross $\Cross^{\alpha,\beta}_{w\times h}$ whose unique south-extremal stitch is $s$ and whose unique west-extremal stitch is $t$. It easily follows from Theorem~\ref{thm:Pete_extremal} that the interior of $C$ must be contained in the interior of $L$.  By Proposition~\ref{ex:rug length}, $\area(C)=2(w+h)-7$. Thus, $\area(L)\geq 2(w+h)-7$, and equality holds if and only if $L$ is the cross $C$.  
\end{proof}

We now turn to maximization problems for hitomezashi loops.  We begin with length. It turns out that rugs achieve the maximum length only when the width or height is very small, so we will focus our attention of non-rug loops.

\begin{theorem}\label{thm:max-length otherwise}
Let $w,h \ge 5$ be odd integers, and let $L$ be a hitomezashi loop with width $w$ and height $h$ that is not a rug.  Then
\begin{equation*}
    \length(L) \le (w-1)(h-1) + 4.
\end{equation*}
Moreover, equality is achieved if and only if $L$ is a horizontal or vertical comb or, when $\min\{w,h\}=5$, a wand.
\end{theorem}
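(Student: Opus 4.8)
The plan is to bound $\length(L)$ by tracking, longitude by longitude, how many horizontal stitches $L$ can have, and analogously for vertical stitches, and then to combine these two counts into a single inequality. Since a loop alternates between horizontal and vertical stitches, $\length(L) = 2H = 2V$, where $H$ is the number of horizontal stitches and $V$ the number of vertical stitches. For each of the $w$ half-integer longitudes in the interior of $L$, orienting $L$ counterclockwise shows there is at least one west-to-east and one east-to-west horizontal stitch crossing it; at the two extremal longitudes there are \emph{exactly} two horizontal stitches (Theorem~\ref{thm:Pete_extremal}). A horizontal stitch at longitude $\ell$ contributes to the crossing count at two consecutive half-integer longitudes $\ell \pm 1/2$. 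The key observation is that $H$ is at most half the total crossing count, and at each non-extremal longitude the number of crossings can be as large as the ``full column'' allows—roughly $h-1$—while the two extremal columns contribute only $2$ each. Summing $\tfrac12\big(2 \cdot 2 + (w-2)(h-1)\big)$ and simplifying should give exactly the bound $(w-1)(h-1)+4$ after the factor-of-two conversion; I would do the same computation with the roles of width and height swapped to get the matching bound and take the better of the two (though by the symmetry of the statement we may as well assume $w \geq h$).

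The main work is the equality analysis, which is why this theorem is ``substantially more complicated.'' Equality forces, at every non-extremal longitude, the maximum possible number of horizontal stitches, and similarly at every non-extremal latitude the maximum number of vertical stitches. I would first argue that equality forces a very rigid ``dense'' structure: between any two consecutive horizontal stitches at a given longitude there must be a pair of vertical stitches connecting them, so the loop must weave back and forth as tightly as possible. Using the Dyck-path/extremal machinery of Pete (Theorem~\ref{thm:Pete_extremal} and the correspondence mentioned after Theorem~\ref{thm:min-length}), I would translate the equality condition into a constraint on the grid labels $\varepsilon_i$ and $\eta_j$. The densest packing in the horizontal direction forces the vertical grid labels to be nearly all equal except near the two extremal longitudes $\alpha-1$, $\alpha$ and the outer boundary $0$, $w$—which is exactly the pattern defining a horizontal comb—and the horizontal grid labels to follow the period-$4$ pattern $\varepsilon_i = 1$ for $i \equiv 0,1$ and $\varepsilon_i = 0$ for $i \equiv 2,3 \pmod 4$ (this is where the hypothesis $h \equiv 1 \pmod 4$ in the comb definition enters). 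When $\min\{w,h\}=5$ the ``comb'' constraint in the short direction degenerates, giving the extra freedom in the $\varepsilon_{2i}$ labels that defines a wand; I would check that Proposition~\ref{ex:rug length} indeed gives $\length = 4h = (w-1)(h-1)+4$ when $w=5$, confirming wands of width $5$ also attain equality.

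Concretely, the steps in order would be: (1) set up the crossing-count inequality and derive $\length(L) \le \tfrac12(4 + (w-2)(h-1)) \cdot 2 = (w-1)(h-1)+4$, being careful that this is the relevant direction when $w \geq h$; (2) observe that $L$ not being a rug is exactly what rules out the degenerate case where one direction has only two stitches per column (so the bound is genuinely an upper bound and not vacuously beaten); (3) assume equality and extract the local density condition at each interior longitude and latitude; (4) convert this via Pete's extremal theorem into explicit constraints on the grid labels; (5) recognize the resulting label patterns as precisely horizontal/vertical combs, with the $\min\{w,h\}=5$ case yielding wands; and (6) verify the converse, that every comb and every width-$5$ or height-$5$ wand actually achieves the bound, using Proposition~\ref{ex:rug length}. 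The hardest step is (4)–(5): going from ``maximum crossings at every column'' to a clean classification of label sequences. The subtlety is that forcing maximum density in one direction does not immediately pin down the labels in the \emph{other} direction, so I expect to need an interleaving argument showing that the horizontal-density condition and the vertical-density condition, imposed simultaneously, leave only the comb/wand label patterns as solutions; propagating these constraints through the compatibility condition across the whole rectangle, and handling the boundary columns at longitudes $0, \alpha-1, \alpha, w$ correctly, will be the delicate part.
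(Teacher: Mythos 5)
Your upper-bound derivation does not work as set up. A unit horizontal stitch has a half-integer longitude and crosses exactly one of the $w$ vertical lines $x=j+\tfrac12$, so the number $H$ of horizontal stitches \emph{equals} the total crossing count; it is not ``at most half'' of it, and no unit stitch contributes to two consecutive half-integer longitudes. The per-column cap of $h-1$ at non-extremal longitudes is also unjustified, and the arithmetic you propose, $\tfrac12\bigl(2\cdot 2+(w-2)(h-1)\bigr)$ doubled, gives $(w-2)(h-1)+4$, which is strictly smaller than $(w-1)(h-1)+4$ and is exceeded by the horizontal comb itself---so the inequality as you derive it is false, while dropping the spurious halving yields roughly $2(w-2)(h-1)$, far too weak. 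There is a structural mismatch that also dooms the equality analysis: in $\HComb^{\alpha}_{w\times h}$ the labels $\varepsilon_i$ alternate in pairs, so a fixed half-integer longitude carries only about $(h+1)/2$ horizontal stitches, nowhere near any per-column maximum; hence ``equality forces the maximum possible number of horizontal stitches at every non-extremal longitude'' cannot be the right equality criterion, and no argument that sums per-column maxima can have combs as its equality cases. Your reading of the non-rug hypothesis is off as well: loops with exactly two horizontal stitches per column are the \emph{minimum}-length (Dyck-path) loops, not rugs; the hypothesis is needed because rugs can beat the bound for small sizes (e.g.\ $\length(\Rug_{5\times 5})=28>20$) and because the rug arises as the boundary case of the key claim below.

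The missing idea is to count lattice points rather than per-column stitches. Since $\length(L)$ equals the number of lattice points through which $L$ passes, the paper proves that if $i_0$ is an extremal latitude and $j_0$ an extremal longitude, then $L$ avoids all four points $(j_0\pm\tfrac12,\,i_0\pm\tfrac12)$: by Theorem~\ref{thm:Pete_extremal}, stitches at such a point would either close up a $4$-stitch loop or force every grid label and make $L$ a rug, both excluded. With $X$ extremal latitudes and $Y$ extremal longitudes, $L$ meets $4X+4Y$ boundary points and at most $(w-1)(h-1)-4XY$ interior points, giving $\length(L)\le (w-1)(h-1)+4-4(X-1)(Y-1)$, which is the sharp bound; equality forces $\min\{X,Y\}=1$, and a label-propagation argument (all $\eta_j$ are determined, then the $\varepsilon_i$ are forced row by row) shows the extremizers are exactly the combs, degenerating to wands when $\min\{w,h\}=5$. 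Your steps (3)--(5) would need to be replaced by this (or an equivalent) mechanism; as proposed they rest on a density condition that the true extremal loops do not satisfy.
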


\begin{proof}
Note that $\length(L)$ is equal to the number of lattice points through which $L$ passes.  Without loss of generality, suppose $L$ is contained in the region  $[0,w] \times [0,h]$.  We claim that if $i_0$ is an extremal latitude for $L$ and $j_0$ is an extremal longitude for $L$, then $L$ cannot pass through any of the four lattice points $(j_0 \pm 1/2, i_0 \pm 1/2)$.  By symmetry, it suffices to show the claim for the lattice point $v:=(j_0-1/2, i_0-1/2)$.

\begin{figure}[htbp]
\includegraphics[height=3.4cm]{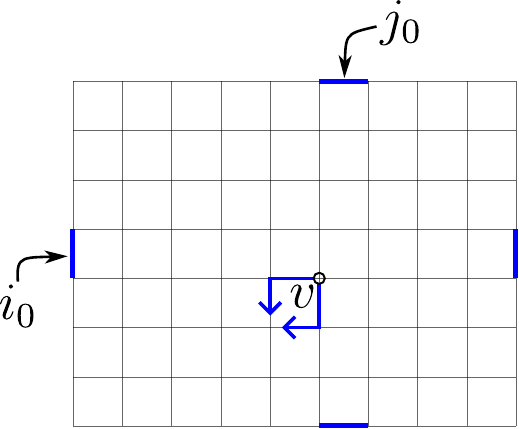}\qquad\qquad\includegraphics[height=3.4cm]{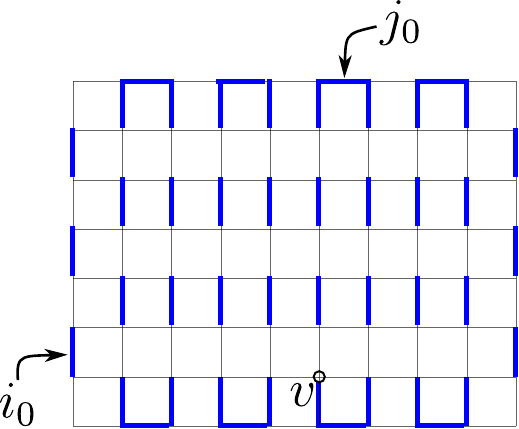}
\caption{The left image shows that we cannot have both $i_0>3/2$ and $j_0>3/2$ in the proof of Theorem~\ref{thm:max-length otherwise} because the stitches would ``close up'' into a 4-stitch loop. The right image shows how the vertical grid labels (and hence all of the vertical stitches) are determined once we assume that $i_0=3/2$.}
\label{fig:no stitches between extremal stitches}
\end{figure}

Assume for the sake of contradiction that $L$ passes through $v$.  Then, by Theorem~\ref{thm:Pete_extremal}, the stitches incident to $v$ must extend to the west and the south.  If $j_0>3/2$ and $i_0>3/2$, then two more applications of Theorem~\ref{thm:Pete_extremal} force us to ``close up'' a $4$-stitch loop passing through $v$ (see the image on the left of Figure~\ref{fig:no stitches between extremal stitches}), which contradicts our assumption that $L$ passes through $v$.  So we may (without loss of generality) restrict our attention to the case where $i_0=3/2$.  Further applications of Theorem~\ref{thm:Pete_extremal} to the extremal latitude $i_0$ determine the vertical grid labels $\eta_j=0$ for all $1 \leq j \leq w-1$, and of course we already know that $\eta_0=\eta_w=1$.  Now observe that the height of $L$ is equal to the smallest value of $i>0$ such that the horizontal grid label $\varepsilon_i$ equals $1$ (since this is when $L$ will ``close up'').  Thus, $\varepsilon_i=0$ for all $1 \leq i \leq h-1$, and of course we already know that $\varepsilon_0=\varepsilon_h=1$.  But now all of the grid labels have been determined, and we see that $L$ is a rug, contrary to our assumption.  This establishes the claim.

Now suppose that $L$ has $X$ extremal latitudes and $Y$ extremal longitudes.  Then $L$ passes through exactly
$4X+4Y$
lattice points on the boundary of the rectangular region $[0,w] \times [0,h]$.  The number of lattice points in the interior of this region through which $L$ passes is, by the claim, at most
$$(w-1)(h-1)-4XY.$$
Hence, we have the bound
$$\length(L) \leq (w-1)(h-1)-4XY+4X+4Y=(w-1)(h-1)+4-4(X-1)(Y-1).$$
Since $X,Y \geq 1$, we conclude that
$$\length(L) \leq (w-1)(h-1)+4,$$
as desired.

It remains to characterize when equality occurs.  Assume first that $\min\{w,h\} > 5$ and that $\length(L)=(w-1)(h-1)+4$.  The final inequality in the previous paragraph is tight only when $\min\{X,Y\}=1$; without loss of generality, assume that $Y=1$.  Let $j_0$ be the unique extremal longitude of $L$, and let $I$ be the set of extremal latitudes of $L$.  We see from the proof of the upper bound on $\length(L)$ that $L$ passes through all lattice points in the interior of $[0,w] \times [0,h]$ except for the points of the form $(j_0 \pm 1/2, i \pm 1/2)$ for $i \in I$.  In particular, $L$ passes through all points $(j,1)$ for $j \in [1,w-1]\setminus \{j_0 \pm 1/2\}$.  Since the only horizontal stitch of $L$ at latitude $0$ is the one at longitude $j_0$ (recall that $j_0$ is the unique extremal longitude of $L$), we determine the vertical grid labels $\eta_j=1$ for all $i \in [1,w-1]\setminus \{j_0 \pm 1/2\}$.  Of course we already know that $\eta_{j_0-1/2}=\eta_{j_0+1/2}=0$.  Note also that $j_0+1/2$ is odd since otherwise $L$ could not pass through the point $(1,1)$.  It then follows from a simple parity check that $\eta_0=\eta_w=0$. We also must have $\varepsilon_0=\varepsilon_h=0$ and $\varepsilon_1=1$. See the top left image in Figure~\ref{fig:Noah's email}.

Now that we have determined all of the vertical grid labels, we turn to the remaining horizontal grid labels.  We proceed from bottom to top, starting with $\varepsilon_2$; these steps are illustrated in Figure~\ref{fig:Noah's email}. 
\begin{enumerate}
    \item We have $\varepsilon_2=0$ since otherwise we would create a $4$-stitch loop passing through $(j,1)$ for some $j \in [1,w-1]\setminus \{j_0 \pm 1/2\}$, contradicting the fact that $L$ passes through all such lattice points. 
    \item We have $\varepsilon_3=0$ since $L$ is contained in $[0,w] \times [0,h]$.
    \item We have $\varepsilon_4=1$ since otherwise we would create a $4$-stitch loop whose center has latitude $7/2$ and longitude not equal to $j_0$. 
    \item Note that choosing $\varepsilon_5=0$ will ``close up'' $L$.  Hence, we have $\varepsilon_5=0$ if $h=5$ and $\varepsilon_5=1$ if $h>5$.
\end{enumerate}
We now iterate these four steps until we reach the latitude $h$, at which point Step (4) ``closes up'' $L$.  This determines all of the grid labels, and we recognize that $L=\HComb^{j_0+1/2}_{w \times h}$, as desired.  (If we had chosen $X=1$ instead of $Y=1$, we would have ended up with a vertical comb.)  Proposition~\ref{ex:rug length} shows that combs achieve equality.

Finally, suppose without loss of generality that $w = 5$. As before, $L$ passes through the points $(j,1)$ for $j \in [1,4] \setminus \{j_0 \pm 1/2\} = \{1,4\}$. Thus, $\eta_1 = \eta_4 = 1$, and $\eta_i = 0$ for all $i \in \{0,2,3,5\}$. Similarly, we may again conclude that $\varepsilon_0 = \varepsilon_h = 0$ and $\varepsilon_1 = \varepsilon_{h-1} = 1$. What is interesting for $w = 5$ is that the previous conclusion about $\varepsilon_2$ no longer holds: the loop $L$ already passes through both $(1,1)$ and $(4,1)$. In other words, there is simply not enough width to cause concern. In fact, the only conclusions we can draw are that: (1)~$\varepsilon_{2i} = \varepsilon_{2i+1}$ for each $i \in [1,(h-3)/2]$ so that the loop neither closes before achieving height $h$ nor has width greater than $5$; and (2)~there exists $i \in [1,(h-3)/2]$ with $\varepsilon_{2i} = \varepsilon_{2i+1} = 0$ so that the loop does achieve width $h$. In other words, the loop is a wand. 
From Proposition~\ref{ex:rug length}, we know that the length of the wand is $4h = (5-1)(h-1)+4$, completing the proof.
\end{proof}

\begin{figure}[htbp]
\[\begin{array}{l}\includegraphics[height=4.287cm]{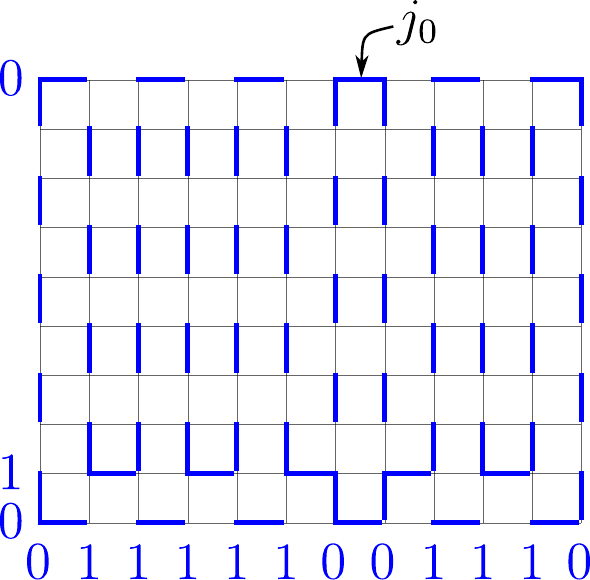}\,\,\raisebox{1.9cm}{$\xrightarrow{\,\,\,(1)\,\,\,}$}\,\,\includegraphics[height=4.287cm]{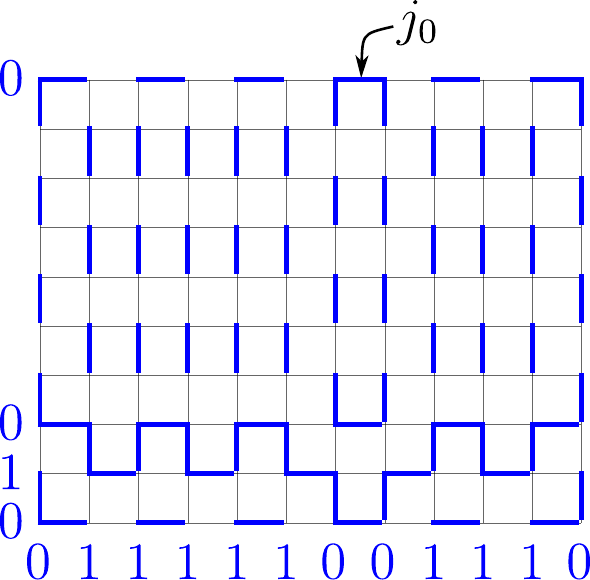}\,\,\raisebox{1.9cm}{$\xrightarrow{\,\,\,(2)\,\,\,}$}\,\,\includegraphics[height=4.287cm]{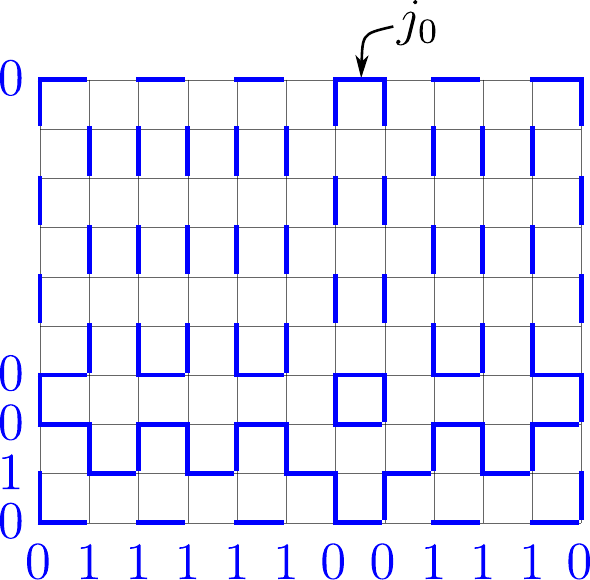}\end{array}\]
\[\begin{array}{l}\hphantom{\includegraphics[height=4.287cm]{HitoBPIC30}}\,\,\raisebox{1.9cm}{$\xrightarrow{\,\,\,(3)\,\,\,}$}\,\,\includegraphics[height=4.287cm]{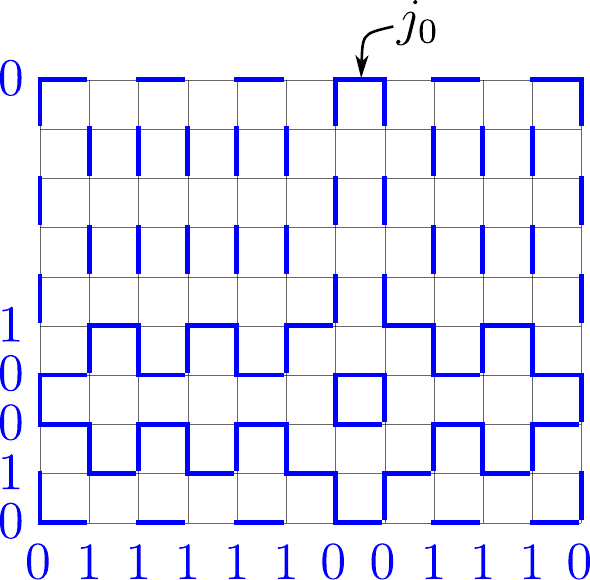}\,\,\raisebox{1.9cm}{$\xrightarrow{\,\,\,(3)\,\,\,}$}\,\,\includegraphics[height=4.287cm]{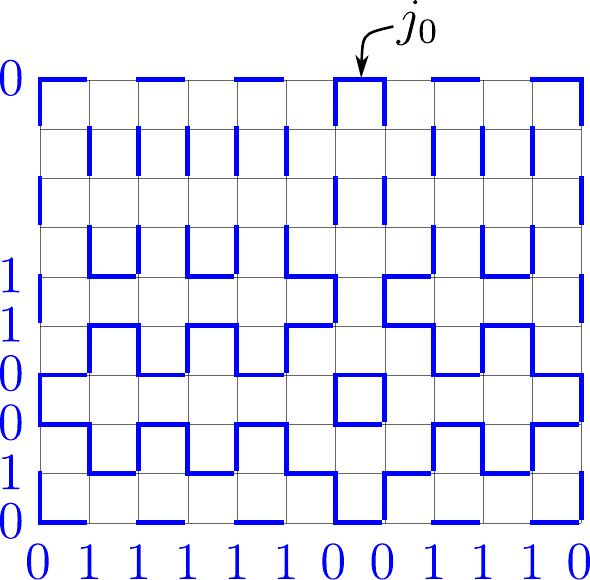}\end{array}\]
\caption{These partial hitomezashi patterns show the step-by-step determination of the grid labels relevant to our extremizing loop $L$.  In the top-left image, we have determined all of the vertical grid labels but only three of the horizontal grid labels.  The remaining images show the outcomes of applying Steps (1)--(4).}
\label{fig:Noah's email}
\end{figure}

We remark that $\length(\Rug_{w\times h})<(w-1)(h-1)+4$ if and only if $\max\{w,h\}\leq 5$ or $w+h \leq 16$.

Maximizing the area of a hitomezashi loop is notably easier than maximizing length.

\begin{theorem}
\label{thm:max-area}
Let $w,h \geq 1$ be odd integers.  If $L$ is a hitomezashi loop with width $w$ and height $h$, then 
\begin{equation*}
    \area(L) \le (w-1)(h-1) + 1.
\end{equation*}
Moreover, equality is achieved if and only if $L$ is a rug.
\end{theorem}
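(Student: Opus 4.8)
The plan is to combine Pick's theorem with a lower bound on $\length(L)$ coming from the extremal stitches. After translating, assume $L\subseteq[0,w]\times[0,h]$ with the width realized by west- and east-extremal vertical stitches on the lines $x=0$ and $x=w$, and the height realized by south- and north-extremal horizontal stitches on $y=0$ and $y=h$. If $\min\{w,h\}=1$, then $w=h=1$ and $L$ is a unit square, for which the statement is immediate; so assume $w,h\ge 3$. Let $X$ and $Y$ be the numbers of extremal latitudes and extremal longitudes of $L$; by Theorem~\ref{thm:Pete_extremal}, $L$ has exactly $X$ west-extremal and $X$ east-extremal vertical stitches, and exactly $Y$ south-extremal and $Y$ north-extremal horizontal stitches. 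A short case analysis shows that no corner of the rectangle lies on $L$: if, say, $(0,0)\in L$, then $L$ would be forced to contain the $4$-stitch loop through $(0,0),(0,1),(1,1),(1,0)$, forcing $w=h=1$. Hence the lattice points of $L$ on the boundary of the rectangle are precisely the $2X$ endpoints of the west-extremal stitches, the $2X$ endpoints of the east-extremal stitches, the $2Y$ endpoints of the south-extremal stitches, and the $2Y$ endpoints of the north-extremal stitches, for a total of $4X+4Y$.

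Now I would regard $L$ as a simple lattice polygon (it is a single cycle since each of its lattice points is incident to exactly one horizontal and one vertical stitch), with $\length(L)$ boundary lattice points, of which $4X+4Y$ lie on the boundary of the rectangle and the remaining $\length(L)-4X-4Y$ lie in the open rectangle. A lattice point of the open rectangle lying on $L$ is not interior to $L$, so Pick's theorem gives
\[
\area(L)=(\text{interior lattice points})+\tfrac12\length(L)-1\le\big[(w-1)(h-1)-(\length(L)-4X-4Y)\big]+\tfrac12\length(L)-1,
\]
that is, $\area(L)\le (w-1)(h-1)+4X+4Y-\tfrac12\length(L)-1$. It therefore suffices to establish the length bound $\length(L)\ge 8X+8Y-4$.

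This length bound is the crux, and the plan is to prove it by counting horizontal stitches of $L$ according to their longitude. Each horizontal stitch of $L$ at longitude $\tfrac12$ has its left endpoint on the line $x=0$, hence at an endpoint of a west-extremal vertical stitch; conversely, each of the $2X$ endpoints of west-extremal stitches is the left endpoint of exactly one horizontal stitch of $L$. So $L$ has exactly $2X$ horizontal stitches at longitude $\tfrac12$, and likewise $2X$ at longitude $w-\tfrac12$. For each $j\in\{2,\dots,w-1\}$, the loop $L$ crosses the vertical line $x=j-\tfrac12$ (it reaches both $x=0$ and $x=w$), so it has at least $2$ horizontal stitches at longitude $j-\tfrac12$. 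Thus $L$ has at least $4X+2(w-2)$ horizontal stitches, and since $\length(L)$ is twice the number of horizontal stitches, $\length(L)\ge 8X+4w-8$. Finally, because no corner lies on $L$, the $Y$ south-extremal horizontal stitches occupy pairwise non-adjacent unit intervals lying strictly inside the bottom edge, which forces $Y\le (w-1)/2$; hence $\length(L)\ge 8X+4w-8\ge 8X+8Y-4$, completing the inequality.

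For the equality case, which I expect to be the most delicate part, equality forces both that every lattice point of the open rectangle lies on or inside $L$ and that $\length(L)=8X+8Y-4$. Running the stitch count in both coordinate directions, the latter forces $X=(h-1)/2$, $Y=(w-1)/2$, and that every interior latitude and every interior longitude of $L$ carries exactly two stitches. Decomposing $\area(L)$ into the horizontal slabs $\{i-1\le y\le i\}$ and using the forced equalities, one finds that the two vertical stitches at each non-extremal interior latitude sit at longitudes $1$ and $w-1$, while the vertical stitches at latitude $\tfrac12$ (and at $h-\tfrac12$) occupy all of longitudes $1,\dots,w-1$; inspecting which stitches of $L$ lie on the line $x=1$ then shows that the extremal latitudes are exactly those $i-\tfrac12$ with $i$ even. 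Converting this information into grid labels yields $\eta_0=\eta_w=1$ and $\eta_j=0$ for $1\le j\le w-1$, and symmetrically $\varepsilon_0=\varepsilon_h=1$ and $\varepsilon_i=0$ for $1\le i\le h-1$, so $L=\Rug_{w\times h}$. The converse is Proposition~\ref{ex:rug length}.
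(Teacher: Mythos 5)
Your argument is correct, but it takes a genuinely different route from the paper. The paper's proof is a two-line cell count: the interior of $L$ must omit at least every other unit cell along the perimeter of the bounding $w\times h$ rectangle (including the four corners), which already gives $\area(L)\le wh-(2w+2h-4)/2=(w-1)(h-1)+1$, and the equality case is recognized directly as the rug. You instead combine Pick's theorem with the count of lattice points of $L$ on the boundary of the rectangle ($4X+4Y$, after excluding the corners) and reduce everything to the length bound $\length(L)\ge 8X+8Y-4$, proved by counting horizontal stitches per longitude together with the packing bound $Y\le (w-1)/2$; I checked these counts and your equality analysis, and they hold --- equality does force $X=(h-1)/2$, $Y=(w-1)/2$, exactly two stitches at each interior longitude and latitude, south-extremal stitches exactly at $[1,2],[3,4],\dots,[w-2,w-1]$, then the slab decomposition puts the remaining interior vertical stitches at longitudes $1$ and $w-1$, and the parity argument on the line $x=1$ pins down the extremal latitudes, yielding the rug's grid labels. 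What your approach buys is a sharper intermediate inequality, $\area(L)\le (w-1)(h-1)+4X+4Y-\tfrac12\length(L)-1$, relating area, length, and the numbers of extremal latitudes and longitudes, plus a fully explicit treatment of the equality case (which the paper dismisses as immediate); what it costs is length and machinery (Pick's theorem and several counting steps) where the paper needs essentially none. Two small points to tighten: your reduction genuinely relies on the assertion that $\min\{w,h\}=1$ forces $w=h=1$ (otherwise a hypothetical height-$1$, width-$w$ loop would already violate the bound), so include the short argument that a height-$1$ loop is forced to close up into the unit square; and in the Pick step, state explicitly that interior lattice points of $L$ lie strictly inside the rectangle, which is what justifies bounding them by $(w-1)(h-1)$ minus the number of points of $L$ in the open rectangle.
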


\begin{proof}
The loop $L$ lives in a $w\times h$ rectangular region, and the interior of $L$ must omit (at least) every other cell along the perimeter of this rectangle, including the four corners. So
$$\area(L) \leq wh-(2w+2h-4)/2=(w-1)(h-1)+1.$$
Proposition~\ref{ex:rug length} tells us that rugs achieve this area, and it is immediate that they are the only loops to do so.
\end{proof}

\section{Long stitches on the square grid}\label{sec:long-square}

In this section, we study the $(a,b,c,d)$-hitomezashi patterns defined in Section~\ref{sec:intro}. We can view the stitches in such a pattern as the edges of a graph whose vertex set is the set of endpoints of stitches. A connected component in this graph is an \dfn{$(a,b,c,d)$-strand} (or just $\dfn{strand}$ when $a,b,c,d$ are clear from context), and a bounded strand (which is graph-theoretically a cycle) is an \dfn{$\longstitch$-loop} (or just a \dfn{loop}). Thus, $(1,1,1,1)$-loops are the same as ordinary hitomezashi loops. 

A natural problem is to classify the $\longstitch$-hitomezashi patterns for different choices of the parameters $a,b,c,d$.  We start by establishing a necessary condition on these parameters for there to exist even a single $\longstitch$-hitomezashi pattern.

\begin{lemma}\label{lem:density}
If an $\longstitch$-hitomezashi pattern exists, then $a+b=c+d$. 
\end{lemma}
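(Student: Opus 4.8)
The plan is to run a density argument on a large square box. First I would make the following observation about endpoints: on the horizontal grid line $y=i$, the endpoints of the horizontal stitches are precisely the lattice points $(k,i)$ with $k\equiv\varepsilon_i$ or $k\equiv\varepsilon_i+a\pmod{a+b}$. Since $1\le a<a+b$, these are two distinct residue classes modulo $a+b$, so in particular no lattice point is an endpoint of more than one horizontal stitch. Let $H$ (respectively, $V$) denote the set of all lattice points that serve as an endpoint of some horizontal (respectively, vertical) stitch. The compatibility condition in the definition of an $(a,b,c,d)$-hitomezashi pattern is exactly the assertion that $H=V$.

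Next I would count. Fix a large positive integer $N$ and consider the box $B_N=[0,N]^2$. For each of the $N+1$ values $i\in\{0,1,\dots,N\}$, the number of points of $H$ on the segment $\{(k,i):0\le k\le N\}$ is $\tfrac{2N}{a+b}+O(1)$, since we are counting the integers in an interval of length $N$ that lie in one of two fixed residue classes modulo $a+b$; summing over $i$ gives $|H\cap B_N|=\tfrac{2N^2}{a+b}+O(N)$. By the symmetric computation with vertical stitches (using $1\le c<c+d$), we get $|V\cap B_N|=\tfrac{2N^2}{c+d}+O(N)$. Since $H=V$, these two quantities agree for every $N$; dividing by $N^2$ and letting $N\to\infty$ forces $\tfrac{1}{a+b}=\tfrac{1}{c+d}$, i.e., $a+b=c+d$.

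I do not expect a serious obstacle here: the argument is essentially bookkeeping. The two points that need care are (i) confirming that the compatibility condition really does say $H=V$, so that the two counts are literally counts of the same set, and (ii) controlling the boundary error terms, which is harmless since the $O(N)$ discrepancy is dominated by the main term of order $N^2$. One could instead phrase the count in terms of the stitches themselves (there are $\tfrac{N^2}{a+b}+O(N)$ horizontal stitches contained in $B_N$, each contributing two endpoints, with distinct stitches contributing distinct endpoint lattice points), which leads to the same conclusion.
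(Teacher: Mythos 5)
Your proof is correct and uses essentially the same idea as the paper: double-count the set of stitch endpoints (which is the same set for horizontal and vertical stitches by the compatibility condition) by latitudes versus longitudes. The only difference is presentational—the paper performs an exact count on a single box whose dimensions match the periods $a+b$ and $c+d$, whereas you count asymptotically on $[0,N]^2$ and let $N\to\infty$, which is equally valid.
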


\begin{proof}
Suppose $H$ is an $\longstitch$-hitomezashi pattern.  Consider an $(a+b-1) \times (c+d-1)$ rectangular subset $S$ of the integer lattice.  On the one hand, by looking at each latitude individually, we find that the number of stitch endpoints in $S$ is $2(a+b-1)$.  On the other hand, by looking at each longitude individually, we find that the number of stitch endpoints in $S$ is $2(c+d-1)$.  These two quantities must be equal, so $a+b=c+d$.
\end{proof}

In light of Lemma~\ref{lem:density}, we will henceforth restrict our attention to the case where $a+b = c+d$.

Let $g$ be a positive integer. If $H$ is an $\longstitch$-hitomezashi pattern and $(x,y)\in\mathbb Z^2$, then we write $(x,y)+g \cdot H$ for the partial $(ga, gb, gc, gd)$-hitomezashi pattern that is obtained by dilating $H$ by a factor of $g$ and then shifting the result by $(x,y)$.  Suppose $H_1, \ldots, H_g$ are $\longstitch$-hitomezashi patterns, and let $\sigma$ be a permutation of the set $[g]$.  Then the set of stitches in the union of $(1,\sigma(1))+g \cdot H_1, (2,\sigma(2))+g \cdot H_2, \ldots, (g,\sigma(g))+g \cdot H_g$
is the set of stitches of a $(ga, gb, gc, gd)$-hitomezashi pattern. Informally, this says that we can obtain a $(ga,gb,gc,gd)$-hitomezashi pattern by dilating each of $H_1,\ldots,H_g$ by a factor of $g$ and then overlaying these dilated hitomezashi patterns so that their relative positions are determined by $\sigma$. See Figure~\ref{FigHitoB6} for an example. It is straightforward to check that every $(ga,gb,gc,gd)$-hitomezashi pattern arises in this way. Thus, in order to complete our classification of $\longstitch$-hitomezashi patterns, it suffices to study the case
$$\gcd(a,b,c,d)=1.$$
Note that this does not force $a,b,c,d$ to be \emph{pairwise} coprime. We will also assume that $(a,b,c,d)\neq (1,1,1,1)$ since the combinatorial properties of ordinary hitomezashi patterns are quite different; as mentioned above, every assignment of grid labels results in a valid $(1,1,1,1)$-hitomezashi pattern, and thus no further classification is possible.

\begin{figure}[ht]
  \begin{center}\includegraphics[height=6.786cm]{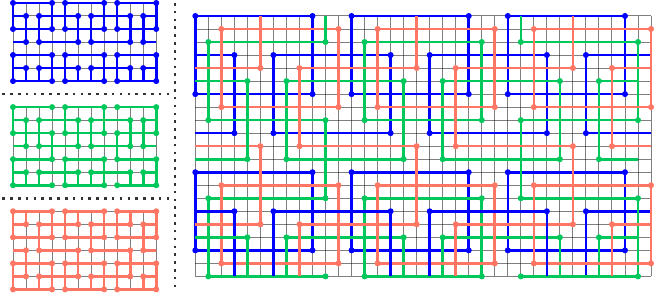}
  \end{center}
  \caption{On the left are (finite portions of) three $(3,1,2,2)$-hitomezashi patterns. On the right, we have dilated these patterns by a factor of $3$ and overlaid them (with suitable shifts) to form a $(9,3,6,6)$-hitomezashi pattern.}\label{FigHitoB6}
\end{figure}

We are now ready to introduce the types of strands that appear in $\longstitch$-hitomezashi patterns.
\begin{itemize}
    \item A loop consisting of four stitches is called a \dfn{rectangle}.
    \item An infinite strand that can be oriented so that stitches are alternately oriented west-to-east and south-to-north is called a \dfn{positive zig-zag}. An infinite strand that can be oriented so that stitches are alternately oriented west-to-east and north-to-south is called a \dfn{negative zig-zag}. The \dfn{type} of such a strand is its positive/negative direction, and a strand of either type is simply a \dfn{zig-zag}.
    \item An infinite strand that can be oriented so that stitches are alternately oriented west-to-east, south-to-north, east-to-west, and south-to-north is called a \dfn{vertical accordion}.  Similarly, an infinite strand that can be oriented so that stitches are alternately oriented south-to-north, west-to-east, north-to-south, and west-to-east is called a \dfn{horizontal accordion}.  A strand of either type is an \dfn{accordion}.
\end{itemize}
See Figure~\ref{FigHitoB5} for examples of rectangles, zig-zags, and accordions.

\begin{figure}[ht]
  \begin{center}\includegraphics[height=3cm]{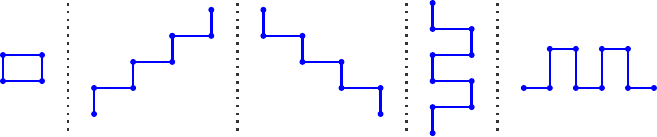}
  \end{center}
  \caption{On the far left is a rectangle. The other images, from left to right, are finite portions of a positive zig-zag, a negative zig-zag, a vertical accordion, and a horizontal accordion.}\label{FigHitoB5}
\end{figure}

In the following subsections, we will characterize the quadruples $\longstitch$ that admit rectangles, zig-zags, and accordions, and we will study how these classes of strands can fit together in $(a,b,c,d)$-hitomezashi patterns. Combining these analyses will give our main structure theorem.

As mentioned earlier, the cases where $a=b$ or $c=d$ are particularly complicated because they admit additional flexibility.  To see this, suppose we know that there are vertical stitches with endpoints at $(0,0)$ and $(a,0)$.  If $a \neq b$, then we can immediately conclude that the pattern contains the horizontal stitch $[(0,0),(a,0)]$.  If instead $a=b$, then it is also possible that the pattern instead contains the horizontal stitches $[(-a,0),(0,0)]$ and $[(a,0),(2a,0)]$.

\subsection{Rectangles}\label{sec:rectangle}
We begin by studying patterns in which all strands are rectangles.  Our first lemma characterizes the quadruples $\longstitch$ for which such an $\longstitch$-hitomezashi pattern is possible.

\begin{lemma}\label{lem:rectangle-quadruples}
Suppose that $\longstitch \neq (1,1,1,1)$ and $\gcd(a,b,c,d) = 1$, and set $M := a+b = c+d$. Suppose that $H$ is an $\longstitch$-hitomezashi pattern such that every strand is a rectangle.  Then $a$, $b$, $c$, and $d$ each have even (additive) order modulo $M$.
\end{lemma}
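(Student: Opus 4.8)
The plan is to reduce the statement to a purely number-theoretic claim and then extract the parity information from the rigidity forced by the ``all rectangles'' hypothesis. \emph{Reformulation:} since $\gcd(a,M)=\gcd(a,a+b)=\gcd(a,b)=\gcd(b,M)$, the additive orders of $a$ and of $b$ modulo $M$ both equal $M/\gcd(a,b)$, and likewise the orders of $c$ and $d$ both equal $M/\gcd(c,d)$; writing $a=\gcd(a,b)a'$, $b=\gcd(a,b)b'$ with $\gcd(a',b')=1$, the number $M/\gcd(a,b)=a'+b'$ is even iff $a',b'$ are both odd, i.e.\ iff $v_2(a)=v_2(b)$, where $v_2$ is the $2$-adic valuation. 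So it suffices to prove $v_2(a)=v_2(b)$ and $v_2(c)=v_2(d)$, equivalently that $\mathrm{ord}_M(a)$ and $\mathrm{ord}_M(c)$ are even; note $(a,b,c,d)\ne(1,1,1,1)$ forces $M\ge 3$. \emph{Normalization and propagation:} every rectangle of $H$ has horizontal edges of length $a$ and vertical edges of length $c$, so after translating we may assume $[0,a]\times[0,c]$ is a rectangle of $H$, whose four edges give $\varepsilon_0\equiv\varepsilon_c\equiv 0$ and $\eta_0\equiv\eta_a\equiv 0\pmod M$. More generally, since each stitch endpoint has degree exactly $2$ in the strand graph, $[p,p+a]\times[q,q+c]$ is a rectangle of $H$ if and only if $\varepsilon_q\equiv\varepsilon_{q+c}\equiv p$ and $\eta_p\equiv\eta_{p+a}\equiv q\pmod M$ (the congruences make the four boundary segments into stitches, and these then constitute a whole strand). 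Call the column of a rectangle $R$ \emph{straight} if the vertical stitches immediately above and below $R$ on its left-hand grid line are left edges of rectangles directly above and below $R$, and define straight rows analogously.

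\emph{The key dichotomy.} Suppose the column of a rectangle $R=[p,p+a]\times[q,q+c]$ is not straight. Since $\eta_p\equiv q$, the vertical stitch just above $R$ on $x=p$ is $[(p,q+M),(p,q+M+c)]$, and non-straightness means it is the right edge of a rectangle $[p-a,p]\times[q+M,q+M+c]$, so $\eta_{p-a}\equiv q+M\equiv q$. Then $[(p-a,q),(p-a,q+c)]$ is a stitch, hence a rectangle edge; it cannot be the left edge of $[p-a,p]\times[q,q+c]$ (that would force $\varepsilon_q\equiv p-a$, contradicting $\varepsilon_q\equiv p$), so it is the right edge of $[p-2a,p-a]\times[q,q+c]$, giving $\varepsilon_q\equiv p-2a$. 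Combined with $\varepsilon_q\equiv p$, this yields $2a\equiv 0\pmod M$, so $a=b=M/2$ (as $0<a<M$) and $\mathrm{ord}_M(a)=\mathrm{ord}_M(b)=2$. Transposing the argument, a non-straight row forces $2c\equiv 0\pmod M$, hence $c=d=M/2$ and $\mathrm{ord}_M(c)=\mathrm{ord}_M(d)=2$.

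\emph{The rigid case.} Now assume every column and every row is straight. For any $i$, a horizontal stitch on $y=i$ is an edge of a rectangle, say $[p,p+a]\times[i,i+c]$ (the top-edge case is symmetric); straightness of rows and columns propagates this to the doubly $M$-periodic family $[p+kM,p+kM+a]\times[i+\ell M,i+\ell M+c]$, so $\varepsilon_{i+\ell M}\equiv p$ for all $\ell$. Thus $\varepsilon_i\bmod M$ depends only on $i\bmod M$, and similarly for $\eta$, so $H$ is invariant under translation by $(M,0)$ and $(0,M)$ and descends to an all-rectangles pattern on the torus $(\mathbb R/M\mathbb Z)^2$ with finitely many pairwise vertex-disjoint, non-wrapping rectangles. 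Each of the $M$ vertical grid lines of the torus carries a single stitch, which is the left or the right edge of a unique rectangle; call the line of type $L$ or $R$ accordingly. A type-$L$ line $x=j$ bounds a rectangle on its right whose right edge lies on $x=j+a$, forcing $x=j+a$ to have type $R$; a type-$R$ line $x=j$ bounds a rectangle whose left edge lies on $x=j-a$, forcing $x=j-a$ to have type $L$. Hence the type alternates around each orbit of the map $+a$ on $\mathbb Z/M\mathbb Z$, and since such an orbit has size $\mathrm{ord}_M(a)$, that order is even. The same argument with horizontal grid lines and the map $+c$ gives $\mathrm{ord}_M(c)$ even.

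\emph{Combining, and the main obstacle.} If $H$ is rigid we are done; if some column and some row are both non-straight, the dichotomy gives $a=b=M/2$ and $c=d=M/2$, so all four orders equal $2$. The remaining, and I expect hardest, situation is the ``mixed'' one: all columns straight but some row non-straight (so $c=d=M/2$ with $a\ne b$), and its transpose. Here $c=d=M/2$ already makes $\mathrm{ord}_M(c)=\mathrm{ord}_M(d)$ even, but the lack of $y$-periodicity blocks the clean torus argument for $\mathrm{ord}_M(a)$. Resolving this should mean exploiting the extra rigidity of $c=d=M/2$: each straight column then covers, on each of its two bounding grid lines, an alternating pattern of stretches of length $M/2$ with period $M$, so the columns must dovetail tightly enough that the $L/R$-alternation still closes up after an even number of $+a$-steps modulo $M$. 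Making this dovetailing argument precise (and, symmetrically, the $a=b=M/2$ case) is what I anticipate to be the technical heart of the proof.
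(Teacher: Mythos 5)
Your dichotomy machinery is essentially sound: the reduction to showing $\mathrm{ord}_M(a)$ and $\mathrm{ord}_M(c)$ are even, the computation that a non-straight column forces $2a\equiv 0\pmod M$ (hence $a=b=M/2$, order $2$) and a non-straight row forces $c=d=M/2$, and the $L/R$-alternation along the $+a$-orbit in the fully rigid case are all correct. But the proof is not complete: you explicitly leave open the ``mixed'' case (all columns straight, some row non-straight, so $c=d=M/2$ but $a\neq b$, where the evenness of $\mathrm{ord}_M(a)$ is still unproven), together with its transpose. That is a genuine gap as written, and it arises only because you formulated the alternation argument on the torus $(\mathbb{R}/M\mathbb{Z})^2$, which requires $M$-periodicity in \emph{both} directions---exactly what fails in the mixed case.

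The gap is closable with what you already have, and doing so recovers the paper's proof. The paper dispenses with $a=b$ immediately (order $2$) and, for $a\neq b$, proves directly that every rectangle $R$ propagates to $(0,\pm M)+R$; note that your ``non-straight column $\Rightarrow a=b$'' is precisely the contrapositive of this claim, so in the mixed case all columns are straight and you have vertical $M$-periodicity for free. That alone suffices: the handedness (left vs.\ right edge) of the vertical stitches at a fixed longitude is then well-defined, and it depends only on the longitude modulo $M$, because the horizontal stitches at a fixed latitude repeat with period $M$ and, since $a\neq b$, the west endpoint of a horizontal stitch carries only that one (east-going) horizontal stitch, so the vertical stitch there must be a left edge of its rectangle. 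Running your alternation ($j$ is left-type iff $j+a$ is right-type) in the plane and using that $\mathrm{ord}_M(a)\cdot a$ is a multiple of $M$ closes the cycle after $\mathrm{ord}_M(a)$ steps and forces that order to be even---no bi-periodicity and no extra ``dovetailing'' rigidity from $c=d=M/2$ is needed. So you are one short step, not a technical heart, away from the paper's argument; the paper's route also shows the straight/non-straight case analysis can be avoided entirely.
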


\begin{proof}
By symmetry, it suffices to show that $a$ has even order modulo $M$.  This conclusion clearly holds if $a=b$, so we may restrict our attention to the case $a \neq b$.  We claim that if $R$ is a rectangle in $H$, then the vertical shift $(0,M)+R$ is also a rectangle in $H$.  Without loss of generality, suppose $R$ is the rectangle whose southwest corner is at the origin.  Then $H$ contains vertical stitches $[(0,M),(0,M+c)]$ and $[(a,M),(a,M+c)]$.  Since $a \neq b$, $H$ must also have the horizontal stitches $[(0,M),(a,M)]$ and $[(0,M+c),(a,M+c)]$, as desired.  The same argument shows that $(0,-M)+R$ is a rectangle in $H$.  Say that a longitude is a \dfn{left-longitude} (respectively, \dfn{right-longitude}) if every vertical stitch at that longitude is the left (respectively, right) side of a rectangle.  It follows from the claim that each longitude is either a left-longitude or a right-longitude.

Observe that if $j$ is a left-longitude, then $j+M$ is also a left-longitude.  Indeed, if $j$ is a left-longitude, then there is a horizontal stitch $s$ whose west endpoint has longitude $j$; the west endpoint of the stitch $(M,0)+s$ 
has longitude $j+M$ and hence $j+M$ is a left-longitude. Let $q$ be the order of $a$ modulo $M$, and assume without loss of generality that $0$ is a left-longitude. On the one hand, by the preceding observation, $qa$ is also a left-longitude. 
On the other hand, since $0$ is a left-longitude, $a$ must be a right-longitude. This then forces $2a$ to be a left-longitude (otherwise, $a$ would be both a left- and a right-longitude, which is impossible), which forces $3a$ to be a right-longitude, and so on. Since $qa$ is a left-longitude, we see that $q$ must be even.
\end{proof}

For the remainder of this subsection, we will focus on the case where $a \neq b$ and $c \neq d$.  Our task is to characterize the $\longstitch$-hitomezashi patterns in which all strands are rectangles.  We first set up an explicit parameterization of these patterns.  Let $\longstitch \neq (1,1,1,1)$ and $\gcd(a,b,c,d) = 1$, and set $M := a+b = c+d$. Furthermore, following Lemma~\ref{lem:rectangle-quadruples}, assume that $a \neq b$ and $c \neq d$ and that $a$ (respectively, $c$) has even order $q$ (respectively, $r$) modulo $M$. Write $\{0,1\}^s$ for the set of $\{0,1\}$-sequences of length $s$, and write $S_n$ for the set of permutations of the set $[n]$. 
We define a map $\varphi$ (depending implicitly on $\longstitch$) from $$\mathcal{X}_{\longstitch}:=\{0,1\}^{M/q} \times \{0,1\}^{M/r} \times S_{M/2}$$ to the set of $\longstitch$-hitomezashi patterns in which all strands are rectangles, as follows.  Consider an element
$$\vec{x}=((u_1, \ldots, u_{M/q}), (v_1, \ldots, v_{M/r}), \sigma)\in \mathcal{X}_{\longstitch}.$$
For each $i$, consider the sequence
$$\hat{u}(i)=i+u_i a \, , \ i+(u_i+2)a \, , \ i+(u_i+4)a \, , \ \ldots, \ i+(u_i+q-2)a \, ,$$
and let $\widetilde{u}$ be the concatenation $$\widetilde{u}=\hat{u}(1)\hat{u}(2) \cdots \hat{u}(M/q):=\widetilde{u}_1 \cdots \widetilde{u}_{M/2}$$
(where $\widetilde{u}_1, \ldots, \widetilde{u}_{M/2}$ are individual elements).  Define sequences $\hat{v}_j$ and $\widetilde{v}$ analogously.  Finally, let $\varphi(\vec{x})$ be the $\longstitch$-hitomezashi pattern $H$ consisting entirely of rectangles, where the set of southwest corners of rectangles is
$$\{(\widetilde{u}_1, \widetilde{v}_{\sigma(1)}), (\widetilde{u}_2, \widetilde{v}_{\sigma(2)}),\ldots, (\widetilde{u}_{M/2}, \widetilde{v}_{\sigma(M/2)}) \}+M \cdot \mathbb{Z}^2.$$
The content of the following lemma is that, via $\varphi$, the set $\mathcal{X}_{\longstitch}$ in fact parameterizes the $(a,b,c,d)$-hitomezashi patterns consisting entirely of rectangles.

\begin{lemma}\label{lem:rectangle-characterization}
Let $\longstitch \neq (1,1,1,1)$ and $\gcd(a,b,c,d) = 1$, and set $M := a+b = c+d$. Furthermore, assume that $a \neq b$ and $c \neq d$, and that $a$ (respectively, $c$) has even order $q$ (respectively, $r$) modulo $M$. Then the map $\varphi$ is a bijection between $\mathcal{X}_{\longstitch}$ and the set of $\longstitch$-hitomezashi patterns in which every strand is a rectangle.
\end{lemma}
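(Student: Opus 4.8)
The plan is to factor $\varphi$ through an intermediate combinatorial model and check that each stage is a bijection. Call a longitude $j$ a \emph{left-longitude} if every vertical stitch at longitude $j$ is the left side of a rectangle, and a latitude $i$ a \emph{bottom-latitude} if every horizontal stitch at latitude $i$ is the bottom of a rectangle; define right-longitudes and top-latitudes dually. Given an $\longstitch$-hitomezashi pattern $H$ all of whose strands are rectangles, let $P,Q\subseteq\mathbb Z/M\mathbb Z$ be its sets of left-longitudes and bottom-latitudes (all relevant data are $M$-periodic), and let $f\colon P\to Q$ send $j$ to $\eta_j$. I would prove: (i) $H\mapsto(P,Q,f)$ is a bijection from the set of rectangle patterns onto the set of triples with $P\sqcup(P+a)=Q\sqcup(Q+c)=\mathbb Z/M\mathbb Z$ and $f$ a bijection; (ii) such triples are in bijection with $\mathcal X_{\longstitch}$, by re-encoding $P,Q$ via the sequences $\widetilde u,\widetilde v$ and $f$ via a permutation $\sigma$; and (iii) $\varphi$ is exactly the composite of (the inverse of) (i) with (ii). Step (iii) is a definitional unwinding: $\widetilde u$ and $\widetilde v$ are explicit enumerations of $P$ and $Q$, $\sigma$ expresses $f$ in those enumerations, and the southwest-corner set in the definition of $\varphi(\vec x)$ is precisely $\{(j,f(j)):j\in P\}+M\mathbb Z^2$, which together with the rectangle structure both determines and is determined by $H$.

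For the structure of $P$ in (i) and (ii), I would use the proof of Lemma~\ref{lem:rectangle-quadruples} with $a\neq b$: every longitude is a left- or a right-longitude, and $j$ is a left-longitude if and only if $j+a$ is a right-longitude, so $P$ is $M$-periodic with $P\sqcup(P+a)=\mathbb Z/M\mathbb Z$ and $|P|=M/2$. On each coset of $\langle a\rangle\leq\mathbb Z/M\mathbb Z$, translation by $a$ acts as a single $q$-cycle, so the left- and right-longitudes alternate along it; this is consistent precisely because $q$ is even, and there are exactly two alternating patterns per coset. Choosing one independently in each of the $M/q=\gcd(a,M)$ cosets yields a bijection from $\{0,1\}^{M/q}$ onto the admissible sets $P$, and this is exactly the data $\vec u$: $\hat u(i)$ enumerates the left-longitudes lying in the coset $i+\langle a\rangle$ (with $u_i$ selecting one of the two patterns), and $\widetilde u$ concatenates these. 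Since $\{1,\dots,M/q\}$ is a complete set of coset representatives (as $M/q=\gcd(a,M)$) and $a$ has order $q$, the $M/2$ entries of $\widetilde u$ are pairwise distinct modulo $M$. The symmetric argument, using $c$ and $r$, handles $Q$ and $\vec v$.

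For the bijection $f$: at a left-longitude $j$, each vertical stitch has its lower endpoint at the southwest corner of a rectangle, so its latitude lies in $Q$; since these latitudes are exactly those congruent to $\eta_j$, the assignment $j\mapsto\eta_j$ maps $P$ into $Q$, and dually $i\mapsto\varepsilon_i$ maps $Q$ into $P$. These two maps are mutually inverse (a rectangle's southwest corner has longitude in $P$ and latitude in $Q$, and either coordinate determines the other modulo $M$), hence bijections because $|P|=|Q|=M/2$; relative to $\widetilde u$ and $\widetilde v$ this is recorded by a unique $\sigma\in S_{M/2}$, and every $\sigma$ occurs. This gives (ii). For the reverse direction of (i), starting from $(\vec u,\vec v,\sigma)$ we reconstruct $P$, $Q$, and $f$ and place rectangles with southwest corners $\{(j,f(j)):j\in P\}+M\mathbb Z^2$, which is exactly the set in the definition of $\varphi(\vec x)$; so $\varphi(\vec x)$ is the pattern built from $(P,Q,f)$, and (i)--(iii) together show $\varphi$ is a bijection.

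The step I expect to be the main obstacle is well-definedness: confirming that the rectangle configuration built from an arbitrary $(\vec u,\vec v,\sigma)$ really is an $\longstitch$-hitomezashi pattern and that all of its strands are rectangles. I would first pin down the induced grid labels explicitly --- $\varepsilon_i=f^{-1}(i)$ when $i\in Q$ and $\varepsilon_i=f^{-1}(i-c)$ otherwise, and dually $\eta_j=f(j)$ when $j\in P$ and $\eta_j=f(j-a)$ otherwise --- and then use $Q\sqcup(Q+c)=\mathbb Z/M\mathbb Z$ (so that a latitude in $Q$ receives only bottom edges of rectangles and a latitude outside $Q$ only top edges) to verify that the horizontal stitches produced at each latitude are exactly those prescribed by $\varepsilon_i$, and symmetrically for vertical stitches. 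The compatibility condition then follows easily: every horizontal stitch in the configuration is an edge of a rectangle that also contributes vertical stitches at both of its endpoints, and vice versa, so no lattice point is an endpoint of a stitch of a single orientation. Finally, distinct rectangles share no vertex (a shared corner would force two rectangles incident to a common horizontal stitch and a common vertical stitch, hence with the same southwest corner), so the configuration is a disjoint union of $4$-cycles and no strand other than a rectangle appears. A secondary subtlety, already invoked in the second paragraph, is the use of the evenness of $q$ and $r$ from Lemma~\ref{lem:rectangle-quadruples}, which guarantees exactly two admissible patterns per coset so that $\{0,1\}^{M/q}\times\{0,1\}^{M/r}$ is the correct index set.
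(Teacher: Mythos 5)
Your proposal is correct and follows essentially the same route as the paper: it uses the first-paragraph argument of Lemma~\ref{lem:rectangle-quadruples} (with $a\neq b$, $c\neq d$) to get $M\cdot\mathbb{Z}^2$-periodicity, the evenness of $q$ and $r$ to get the two alternating left/right (bottom/top) patterns on each coset of $\langle a\rangle$ (resp.\ $\langle c\rangle$), and then reads off $(\vec u,\vec v,\sigma)$ from the southwest corners --- your intermediate $(P,Q,f)$ model is just a repackaging of that data. The only difference is that you spell out the well-definedness and injectivity of $\varphi$, which the paper dismisses as immediate; your verification there is sound.
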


\begin{proof}
It is immediate that every $\varphi(\vec{x})$ is an $\longstitch$-hitomezashi pattern consisting entirely of rectangles and that $\varphi$ is injective.  It remains to show that $\varphi$ is surjective.  Let $H$ be an $\longstitch$-hitomezashi pattern consisting entirely of rectangles.  The argument from the proof of Lemma~\ref{lem:rectangle-quadruples} (first paragraph) shows that if $R$ is a rectangle in $H$, then $(\pm M,0)+R$ and $(0,\pm M)+R$ are also rectangles in $H$.  (This is the only place where we make essential use of the assumption that $a \neq b$ and $c \neq d$.)  It follows that the grid label $\varepsilon_i$ (respectively, $\eta_j$) depends only on the residue class of $i$ (respectively, $j$) modulo $M$.  We now focus on the behavior of $H$ ``modulo $M$,'' i.e., in an $M \times M$ box.

For each $1 \leq i \leq M/q$, consider the rectangles of $H$ whose vertical sides have longitudes equivalent to $i$ modulo $M/q$.  The set of longitudes (modulo $M$) of the west sides of these rectangles is either $\{i, i+2a, \ldots, i+(q-2)a\}$ or $\{i+a, i+3a, \ldots, i+(q-1)a\}$, depending on whether the vertical stitches at longitude $i$ are the left or right sides of their rectangles.  Set $u_i=0$ in the first case and $u_i=1$ in the second case.  For each $1 \leq j \leq M/r$, produce $v_j \in \{0,1\}$ in the same way, i.e., according to whether the horizontal stitches at latitude $j$ are the bottom or top sides of their rectangles.

From these sequences $u_1, \ldots, u_{M/q}$ and $v_1, \ldots, v_{M/r}$ , consider the sequences $\widetilde{u}$ and $\widetilde{v}$ (as defined in the set-up for the lemma).  For each $1 \leq k \leq M/2$, there is a unique $1 \leq \ell \leq M/2$ such that the point $(\widetilde{u}_k, \widetilde{v}_\ell)$ is the southwest corner of a rectangle of $H$; define the permutation $\sigma \in S_{M/2}$ by setting $\sigma(k)=\ell$ for each $k$.  At last, we see that
$$\varphi((u_1, \ldots, u_{M/q}), (v_1, \ldots, v_{M/r}), \sigma)$$
agrees with $H$ on all rectangles with southwest corners in the box $[1,M] \times [1,M]$, and we conclude by the $M \cdot \mathbb{Z}^2$-periodicity of both $H$ and $\varphi((u_1, \ldots, u_{M/q}), (v_1, \ldots, v_{M/r}), \sigma)$ that in fact these two patterns are equal, as desired.
\end{proof}

\subsection{Zig-zags}\label{sec:zig-zag}
Zig-zags turn out to be the most rigid of the three types of strand.

\begin{lemma}\label{lem:zig-zag}
Let $\longstitch \neq (1,1,1,1)$ and $\gcd(a,b,c,d) = 1$, and set $M := a+b = c+d$.
\begin{enumerate}
    \item Suppose that $H$ is an $\longstitch$-hitomezashi pattern with a zig-zag $Z$.  Then $\gcd(a,b)=\gcd(c,d)=1$.  Moreover, $H$ is the unique $\longstitch$-hitomezashi pattern containing $Z$, and every strand in $H$ is a zig-zag of the same type as $Z$.
    \item If $\gcd(a,b)=\gcd(c,d)=1$, then there exists an $\longstitch$-hitomezashi pattern $H$ as described in part (1).
\end{enumerate}  
\end{lemma}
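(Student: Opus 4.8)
The plan is to analyze a zig-zag locally, stitch by stitch, using the rigidity built into the definition of an $\longstitch$-hitomezashi pattern via the grid labels $\varepsilon_i$ and $\eta_j$. Consider part (1). Fix a zig-zag $Z$, say of positive type, and orient it so that its stitches alternate west-to-east horizontal stitches and south-to-north vertical stitches. The key first step is to track the coordinates along $Z$: starting from a vertex $(x_0,y_0)$, traversing one horizontal stitch moves us to $(x_0+a,y_0)$ (the stitch has length $a$), and traversing the next vertical stitch moves us to $(x_0+a,y_0+c)$. So after $n$ complete ``zig-zag steps'' we are at $(x_0+na,\,y_0+nc)$. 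Crucially, the horizontal stitch along $Z$ at latitude $y_0+nc$ starts at longitude $x_0+na$, which pins down $\varepsilon_{y_0+nc}\equiv x_0+na\pmod M$; similarly, tracking the other family of horizontal stitches (the ones just before each vertical step) pins down $\varepsilon_{y_0+nc}$ from the \emph{other} side. Because a vertical stitch of length $c$ at longitude $x_0+na$ occupies latitudes $y_0+nc$ through $y_0+(n+1)c$, its top endpoint is a stitch endpoint and its bottom endpoint is a stitch endpoint, and the compatibility condition forces horizontal stitches there; the only way this is consistent for \emph{all} $n$ is if $a\ne b$ (so that the horizontal stitch emanating from a vertical-stitch endpoint is forced to lie on one particular side) — if $a=b$ there is genuine ambiguity and the local picture need not be a zig-zag. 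This is where $\gcd(a,b)=1$ enters: as $n$ ranges over $\mathbb Z$, the latitudes $y_0+nc$ hit every residue class modulo $M/\gcd(c,M)$, and combined with the two families of horizontal stitches forced along $Z$, we find that \emph{every} latitude carries exactly the zig-zag pattern, which forces $\gcd(c,d)=1$ (otherwise two different ``phases'' of horizontal stitches would collide or leave a gap); symmetrically $\gcd(a,b)=1$. Once every grid label $\varepsilon_i,\eta_j$ is determined by this propagation, $H$ is the unique such pattern and every strand is visibly a positive zig-zag.

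For part (2), assuming $\gcd(a,b)=\gcd(c,d)=1$, I would simply write down the candidate pattern explicitly and verify compatibility. Since $\gcd(c,M)=\gcd(c,a+b)=\gcd(c,a+b)$ — and here I would use $\gcd(c,d)=1$ to get that $c$ is invertible modulo $M$ in the relevant sense — set $\varepsilon_i := c^{-1}i \cdot a \pmod M$ (or rather, choose the labels so that the horizontal stitch at latitude $i$ starts at the longitude dictated by following a zig-zag backward from a fixed basepoint), and dually set $\eta_j := a^{-1}j\cdot c\pmod M$ after an analogous computation. Concretely: declare the horizontal stitches to be $[(na,nc),(na+a,nc)]$ translated appropriately, i.e., the horizontal stitch on line $y=i$ is $[(k,i),(k+a,i)]$ for $k\equiv a\cdot c^{-1} i$, where $c^{-1}$ is an inverse of $c$ modulo $M$ (which exists because $\gcd(c,M)=1$, a consequence of $\gcd(c,d)=1$). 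One then checks directly that the vertical stitch forced at each stitch endpoint is exactly the one prescribed by the dual rule, so the compatibility condition holds and every strand is a positive zig-zag. The same construction with north-to-south in place of south-to-north gives a negative zig-zag pattern.

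The main obstacle I anticipate is the bookkeeping in part (1) needed to rule out $a=b$ and to show that the number-theoretic conditions $\gcd(a,b)=\gcd(c,d)=1$ are \emph{forced}, rather than merely sufficient. The subtlety is that knowing one strand $Z$ is a zig-zag tells us the grid labels only along the arithmetic progression of latitudes and longitudes visited by $Z$; to conclude that \emph{all} grid labels are determined — and in particular that no latitude can support a ``wide'' stitch or a misaligned phase — one must argue that the progression $\{nc \bmod M\}$ together with the compatibility constraints at the endpoints of $Z$'s vertical stitches leaves no freedom, and this is precisely where a gcd strictly bigger than $1$ would create either an inconsistency or an unforced choice. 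I would handle this by carefully distinguishing the case $a=b$ (where, as noted in the paragraph preceding Subsection~\ref{sec:rectangle}, the horizontal stitch emanating from the endpoint of a vertical stitch is not uniquely determined, so a strand containing a single zig-zag portion need not be a full zig-zag — but then one shows no zig-zag strand can even exist, because it would have to ``turn around''), and the case $a\ne b$ (where each endpoint forces its neighbor, and the propagation is rigid). Once that dichotomy is cleanly established, the rest is the routine verification sketched above.
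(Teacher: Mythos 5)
Your overall skeleton (rule out $a=b$ and $c=d$, force the gcd conditions, propagate along the zig-zag to get uniqueness, then construct explicitly for part (2)) matches the paper, and your part (2) is essentially the paper's own verification with $\varepsilon_i\equiv iac^{-1}$ and the dual formula for $\eta_j$. But the heart of part (1) is missing. Your reason that $a\neq b$ (``the only way this is consistent for all $n$,'' or later ``it would have to turn around'') is an assertion, not an argument: the hypothesis already grants that $Z$ \emph{is} a zig-zag, so the task is to derive a contradiction from $a=b$, and that requires the mod-$M$ periodicity of stitches along a single grid line. Concretely (as in the paper): if $a=b$, then since $Z$ contains $[(0,0),(a,0)]$, the latitude-$0$ line also carries $[(2a,0),(3a,0)]$, so $(2a,0)$ is a vertical-stitch endpoint; $Z$ already contains $[(2a,c),(2a,2c)]$ and vertical stitches on the line $x=2a$ repeat mod $M$, so the stitch at $(2a,0)$ must be $[(2a,-c),(2a,0)]$, forcing $2c\equiv 0\pmod M$, hence $a=b=c=d$ and then $(a,b,c,d)=(1,1,1,1)$, a contradiction.

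More seriously, your derivation of $\gcd(a,b)=\gcd(c,d)=1$ is circular. You say the latitudes $y_0+nc$ hit ``every residue class,'' conclude that ``every latitude carries exactly the zig-zag pattern,'' and from that force $\gcd(c,d)=1$ because otherwise phases ``collide or leave a gap.'' When $\gcd(c,M)>1$, the latitudes visited by $Z$ lie only in the subgroup $\gcd(c,M)\mathbb{Z}/M\mathbb{Z}$; latitudes outside it are simply unconstrained by $Z$ at this stage, so nothing collides and no contradiction arises from this count. The gcd condition is forced by a mechanism your sketch never touches: set $L=aM/\gcd(a,b)$, a multiple of $M$. Periodicity along the latitude lines $y=0$ and $y=-c$ forces horizontal stitches ending at $(L,0)$ and at $(L,-c)$; since $c\neq d$, the vertical stitch at these points must be $[(L,-c),(L,0)]$. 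But $Z$ itself has a vertical stitch on the line $x=L$ with south endpoint at latitude $cM/\gcd(a,b)-c$, and vertical stitches on one line repeat mod $M$, so $M\mid cM/\gcd(a,b)$, i.e., $\gcd(a,b)\mid c$ (hence also $\gcd(a,b)\mid d$), giving $\gcd(a,b)=\gcd(a,b,c,d)=1$; symmetrically $\gcd(c,d)=1$. Only \emph{after} this is established does $\gcd(c,M)=1$ allow the zig-zag (together with its $M\mathbb{Z}^2$-translates, which must also be strands) to determine every grid label, yielding uniqueness and that all strands are zig-zags of the same type; your proposal invokes that全-label determination as an input to proving the gcd condition, i.e., it uses the conclusion to prove itself.
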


\begin{proof}
We begin with part (1).  Without loss of generality, we can assume that $Z$ is a positive zig-zag containing the horizontal stitch $[(0,0),(a,0)]$.  We claim that $a \neq b$.  Indeed, if instead $a=b$, then $H$ contains the horizontal stitch $[(2a,0),(3a,0)]$, and so $(2a,0)$ is the endpoint of a vertical stitch of $H$.  This vertical stitch must be $[(2a,0),(2a, -c)]$ since we already know that $Z$ contains the vertical stitch $[(2a,c),(2a,2c)]$.  The difference in latitude between the south endpoints of these two vertical stitches, namely, $2c$, must be a multiple of $M$.  So $2c=M$, i.e., $a=b=c=d$.  The condition $\gcd(a,b,c,d)=1$ implies that $a=b=c=d=1$, a contradiction.  In the same way, we find that $c \neq d$.

Note that $aM/\gcd(a,b)$ is an integer multiple of $M$.  Consider the horizontal stitches \[[(aM/\gcd(a,b),0),(aM/\gcd(a,b)+a,0)]\quad \text{and}\quad[(aM/\gcd(a,b)-a,-c),(aM/\gcd(a,b),-c)],\] which are forced to be in $H$ by the existence of $Z$.  Since $c \neq d$, the vertical stitch incident to $(aM/\gcd(a,b),0)$ must have its other endpoint at $(aM/\gcd(a,b),-c)$.  We also know that $Z$ contains the vertical stitch $[(aM/\gcd(a,b),cM/\gcd(a,b)),(aM/\gcd(a,b),cM/\gcd(a,b)-c)]$.  See Figure~\ref{fig:zig-zag}. Hence, $cM/\gcd(a,b)$ is a multiple of $M$. Thus, $c$ (and hence also $d$) is a multiple of $\gcd(a,b)$.  It follows that $\gcd(a,b)=\gcd(a,b,c,d)=1$.  Likewise, $\gcd(c,d)=1$.

We now prove the second statement of part (1).  Note that the stitches of $Z$ determine all of the horizontal grid labels at latitudes that are multiples of $c$.  Following the arguments of the previous paragraph (crucially using the fact that $a \neq b$ and $c \neq d$), we see that every shift of $Z$ by an element of  $M \cdot \mathbb{Z}^2$ is also a strand of $H$.  Since $c$ is coprime to $M$ (indeed, $\gcd(c,M)=\gcd(c,d)=1$), these shifts of $Z$ determine all of the remaining horizontal grid labels.  The vertical grid labels are uniquely determined by $Z$ in the same way, and the uniqueness of $H$ follows.

To see that all other strands in $H$ are also positive zig-zags, we simply read off the grid labels as follows: The horizontal grid label at latitude $i$ is $\varepsilon_i=iac^{-1} \pmod{M}$, and the vertical grid label at longitude $j$ is $\eta_j=jc a^{-1}-c \pmod{M}$.  (Here, $a^{-1}, c^{-1}$ denote the inverses of $a,c$ modulo $M$.)

Finally, for part (2) of the lemma, one can easily check that the grid labels described in the previous paragraph indeed result in a valid $(a,b,c,d)$-hitomezashi pattern (i.e., the endpoints of the vertical stitches coincide with the endpoints of the horizontal stitches).
\end{proof}

\begin{figure}[ht]
  \begin{center}\includegraphics[height=4.95cm]{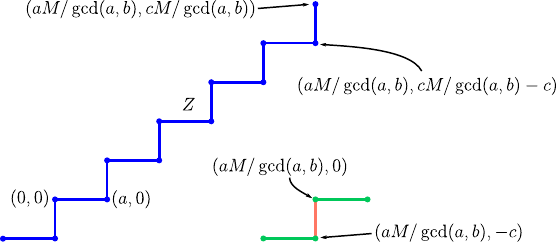}
  \end{center}
  \caption{An illustration of the proof of Lemma~\ref{lem:zig-zag}. The existence of the zig-zag $Z$ in $H$ forces the two green stitches to also be in $H$, and these force the orange stitch to be in $H$. }\label{fig:zig-zag}
\end{figure}

\begin{remark}\label{rem:zig-zag}
As a consequence of part (1) of Lemma~\ref{lem:zig-zag}, zig-zags are impossible when $a=b$ or $c=d$ (unless $(a,b,c,d)=(1,1,1,1)$). 
Also, a pattern of zig-zags as in the lemma is $M$-bi-periodic; i.e., invariant under translation by elements of $M \cdot \mathbb{Z}^2$.
\end{remark}

We conclude this subsection by showing that the presence of just three consecutive zig-zagging stitches forces an entire strand to be a zig-zag. Notice that this lemma dovetails nicely with the ``moreover'' statement in part (1) of Lemma~\ref{lem:zig-zag}.

\begin{lemma}\label{lem:3-stitches}
Suppose that $\longstitch \neq (1,1,1,1)$, $\gcd(a,b,c,d) = 1$, $a \neq b$, and $c \neq d$, and set $M := a+b = c+d$. Suppose that $H$ is an $\longstitch$-hitomezashi pattern that contains the vertical stitches $[(0,-c),(0,0)]$ and $[(a,0),(a,c)]$ and the horizontal stitch $[(0,0),(a,0)]$.  Then the strand in $H$ containing these three stitches is a positive zig-zag.
\end{lemma}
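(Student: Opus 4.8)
The plan is to traverse the strand outward from the three given stitches, determining one new stitch at each step, and to argue that the compatibility condition of Section~\ref{sec:intro} forces the traversal to zig-zag. Throughout, write $M:=a+b=c+d$, and recall that a horizontal stitch at latitude $i$ has its west endpoint at an $x$-coordinate congruent to $\varepsilon_i\pmod M$, while a vertical stitch at longitude $j$ has its south endpoint at a $y$-coordinate congruent to $\eta_j\pmod M$. The three given stitches immediately tell us that $\varepsilon_0\equiv 0$, $\eta_0\equiv -c$, and $\eta_a\equiv 0\pmod M$. The only arithmetic facts the argument uses are that $a\not\equiv 0$ and $c\not\equiv 0\pmod M$ (immediate from $0<a,c<M$) and that $2a\not\equiv 0$ and $2c\not\equiv 0\pmod M$; the latter two are exactly where the hypotheses $a\neq b$ and $c\neq d$ enter, since $2a\equiv 0\pmod M$ would force $2a=M$, hence $a=b$, and likewise $2c\equiv 0\pmod M$ would force $c=d$.

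For the induction, set $P_0=(0,0)$, $P_1=(a,0)$, and for $j\ge 1$ let $P_{2j}=(ja,jc)$ and $P_{2j+1}=((j+1)a,jc)$. I will show by strong induction on $j\ge 1$ the statement $F(j)$ that the strand contains the horizontal stitch $[P_{2j},P_{2j+1}]$ (oriented west-to-east) and the vertical stitch $[P_{2j+1},P_{2j+2}]$ (oriented south-to-north). Each step is the same local move, applied twice. Having arrived at a lattice point $P$ along a vertical stitch as its north endpoint, one first notes that $P$ cannot also be the south endpoint of a vertical stitch at that longitude, since the two south endpoints would then differ by $c$, which is $\not\equiv 0\pmod M$; so the strand must leave $P$ along a horizontal stitch, and $P$ is the endpoint of exactly one horizontal stitch at its latitude (two such stitches sharing an endpoint would force $a\equiv 0\pmod M$). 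Then $P$ is either the west or the east end of that horizontal stitch, and one rules out the ``wrong'' option by observing that it would make a specific neighbouring lattice point the endpoint of a horizontal stitch but not of any vertical stitch: the relevant vertical grid label is known (from $F(j-1)$, or from $\eta_a\equiv 0$ when $j=1$) to satisfy $\eta_{ja}\equiv (j-1)c\pmod M$, and the two congruences needed to place that neighbour on a vertical stitch at longitude $ja$ reduce to $2c\equiv 0\pmod M$ and $c\equiv 0\pmod M$, both impossible. The symmetric move — arriving along a horizontal stitch and forced to leave along a vertical one — is dispatched the same way, now invoking $\varepsilon_{jc}\equiv ja\pmod M$ from $F(j)$ (or $\varepsilon_0\equiv 0$) and the impossibility of $2a\equiv 0$ and $a\equiv 0\pmod M$. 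Carrying this out once for $F(1)$ and once for the inductive step shows that the strand contains every stitch $[P_n,P_{n+1}]$ with $n\ge 0$. The backward half of the strand — that it likewise contains $[(-a,-c),(0,-c)]$, then $[(-a,-2c),(-a,-c)]$, and so on — follows from an entirely symmetric argument: run the same bookkeeping from the three given stitches in the opposite direction, or apply the forward argument to the $(a,b,c,d)$-hitomezashi pattern obtained from $H$ by a $180^\circ$ rotation (and a translation).

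Combining the two halves, the strand passes through the pairwise distinct lattice points $(ja,jc)$ and $((j+1)a,jc)$ for all $j\in\mathbb Z$, so it is infinite, and by construction its stitches alternate between west-to-east horizontal stitches and south-to-north vertical stitches; hence it is a positive zig-zag. The main obstacle is the ``wrong-turn'' elimination inside the local move: one must identify correctly which single neighbouring lattice point would become a horizontal endpoint under the bad assumption, read off the pertinent grid label from the induction hypothesis, and check that the two congruences placing that point on a vertical (respectively horizontal) stitch both collapse to one of $a\equiv 0$, $c\equiv 0$, $2a\equiv 0$, or $2c\equiv 0\pmod M$. Everything else is routine bookkeeping about which stitch meets which lattice point.
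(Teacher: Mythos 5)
Your overall strategy is exactly the paper's: extend the three given stitches step by step, ruling out the ``wrong turn'' at each lattice point because its other endpoint would be the endpoint of a stitch in one direction with no matching stitch in the perpendicular direction, using $a\not\equiv 0$, $c\not\equiv 0$, $2a\not\equiv 0$, $2c\not\equiv 0 \pmod M$. However, the crucial bookkeeping in your local move is off by one grid line, and as written the elimination does not go through. At $P_{2j}=(ja,jc)$ the wrong turn is the horizontal stitch $[((j-1)a,jc),(ja,jc)]$, whose problematic endpoint $((j-1)a,jc)$ lies at longitude $(j-1)a$, not $ja$; the label you must invoke is therefore $\eta_{(j-1)a}\equiv(j-2)c\pmod M$, which comes from $F(j-2)$ for $j\ge 3$, from $\eta_a\equiv 0$ when $j=2$, and from $\eta_0\equiv -c$ when $j=1$. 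The label you actually cite, $\eta_{ja}\equiv(j-1)c$, is the label of the line through the current point itself; it is consistent with both the right and the wrong turn and rules out nothing, and under the wrong-turn assumption no new endpoint is created at longitude $ja$ at all. The same slip occurs in the other half of the move: the check at $P_{2j+1}$ concerns the point $((j+1)a,(j-1)c)$ at latitude $(j-1)c$, so it needs $\varepsilon_{(j-1)c}\equiv(j-1)a$ (from $F(j-1)$, or $\varepsilon_0\equiv 0$ when $j=1$), not $\varepsilon_{jc}\equiv ja$.

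This matters beyond pedantry at the very first step: for $j=1$ the elimination is anchored by $\eta_0\equiv -c$, i.e.\ by the hypothesized stitch $[(0,-c),(0,0)]$, which is in the statement precisely so that the horizontal stitch at $(a,c)$ cannot turn back to $(0,c)$; your forward argument never uses that stitch (you invoke $\eta_a\equiv 0$ instead, which does not constrain $(0,c)$). The congruences you report ($2c\equiv 0$ or $c\equiv 0$, respectively $2a\equiv 0$ or $a\equiv 0$) are the correct ones, and they do arise once the neighbour's label is read off, so the repair is immediate inside the strong induction you set up; with that correction (and the symmetric backward extension, where $\eta_a\equiv 0$ is the anchor) your proof coincides with the paper's.
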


\begin{proof}
The horizontal stitch incident to the point $(a,c)$ must connect to $(2a,c)$ (rather than to $(0,c)$).  Likewise, the vertical stitch incident to $(2a,c)$ must connect to $(2a,2c)$ (rather than to $(2a,0)$), and so on.  The same argument allows us to extend the partial zig-zag in the other direction.
\end{proof}

\subsection{Putting everything together in the generic case}\label{sec:together}

We are now ready to complete our classification in the ``generic case'' where $a \neq b$ and $c \neq d$.  The following lemma gives our key dichotomy.

\begin{lemma}\label{lem:dichotomy}
Let $(a,b,c,d)$ be such that $\gcd(a,b,c,d)=1$, $a \neq b$, $c \neq d$, and set $M := a+b=c+d$.  Let $H$ be an $\longstitch$-hitomezashi pattern.  Then either all of the strands in $H$ are rectangles or all of the strands are zig-zags of the same type. 
\end{lemma}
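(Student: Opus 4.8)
The plan is to prove the stronger statement that every strand of $H$ is either a zig-zag or a rectangle; the dichotomy then follows at once by combining this with Lemma~\ref{lem:zig-zag}(1). First I would record the routine structural fact that a strand, being a connected component of the $2$-regular graph formed by the stitches, has stitches alternating between horizontal and vertical as one traverses it, and that at every stitch endpoint there is exactly one horizontal stitch and exactly one vertical stitch.

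Next I would fix a strand $S$, choose a horizontal stitch $s = [(L,i),(L+a,i)]$ of $S$, and let $V$ and $V'$ be the vertical stitches of $S$ meeting $s$ at $P = (L,i)$ and $Q = (L+a,i)$, respectively (these are distinct since $a \geq 1$). Each of $V$ and $V'$ extends from the line $y = i$ either upward or downward. If they extend to opposite sides, then the three consecutive stitches $V$, $s$, $V'$ form a ``zig-zag triple'': after a translation and a possible reflection --- operations under which the class of $(a,b,c,d)$-hitomezashi patterns is invariant --- these become precisely the three stitches appearing in Lemma~\ref{lem:3-stitches}, and hence $S$ is a zig-zag. If instead $V$ and $V'$ extend to the same side, say upward (the downward case being symmetric), then $(L, i+c)$ and $(L+a, i+c)$ are stitch endpoints, so each is the endpoint of a (unique) horizontal stitch on the line $y = i+c$; in particular $L \equiv \varepsilon_{i+c}$ or $L \equiv \varepsilon_{i+c} + a \pmod{M}$. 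In the second case we would have $L + a \equiv \varepsilon_{i+c} + 2a \pmod{M}$, which equals neither $\varepsilon_{i+c}$ nor $\varepsilon_{i+c} + a$, because $2a \not\equiv 0 \pmod{M}$ (here we use $a \neq b$) and $a \not\equiv 0 \pmod{M}$; this contradicts $(L+a, i+c)$ being a stitch endpoint on the line $y = i+c$. Therefore $L \equiv \varepsilon_{i+c} \pmod{M}$, so $[(L, i+c),(L+a, i+c)]$ is a stitch of $H$, and together with $s$, $V$, and $V'$ it closes up into a $4$-stitch loop; by connectedness, $S$ is exactly this loop, i.e., a rectangle.

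Finally I would assemble the conclusion: if some strand of $H$ is a zig-zag, then Lemma~\ref{lem:zig-zag}(1) forces every strand of $H$ to be a zig-zag of the same type; and if no strand of $H$ is a zig-zag, then by the preceding paragraph every strand of $H$ is a rectangle. In either case the dichotomy holds.

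I expect the main obstacle to be the bookkeeping in the middle step: one must pin down the ``zig-zag triple'' configuration, check that it is genuinely complementary --- among triples of consecutive stitches centered at a horizontal stitch --- to the ``same side'' configuration, and confirm that all of its translated and reflected forms are covered by Lemma~\ref{lem:3-stitches}. By contrast, the ``same side'' case closes up using only the elementary modular arithmetic above, which is where the hypothesis $a \neq b$ enters directly (the hypothesis $c \neq d$ being needed through Lemma~\ref{lem:3-stitches}).
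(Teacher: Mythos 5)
Your proposal is correct and takes essentially the same route as the paper: reduce a non-rectangle strand to the three-stitch configuration of Lemma~\ref{lem:3-stitches} (after a translation and possible reflection) and then invoke Lemma~\ref{lem:zig-zag}(1) to make every strand a zig-zag of the same type. The only difference is that you explicitly justify, via the modular-arithmetic closing-up argument using $a \neq b$, the step the paper treats as a ``without loss of generality,'' namely that a horizontal stitch whose two neighboring vertical stitches extend to the same side forces its strand to be a rectangle.
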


\begin{proof}
If every strand in $H$ is a rectangle, then we are done, so consider the case where there is a strand $S$ that is not a rectangle. Then, without loss of generality  (that is, after a translation and possibly a reflection), this $S$ contains three consecutive stitches that are arranged as in the hypothesis of Lemma~\ref{lem:3-stitches}; the conclusion  of that result implies that $S$ is a zig-zag.  Then part (1) of Lemma~\ref{lem:zig-zag} tells us that all of the strands in $H$ are zig-zags having the same type as $S$.
\end{proof}

We now complete our classification theorem, including the enumeration of $\longstitch$-hitomezashi patterns, for the generic case.

\begin{theorem}[Classification theorem, generic case]\label{thm:classification-generic}
Let $(a,b,c,d)$ be such that $\gcd(a,b,c,d)=1$, $a \neq b$, $c \neq d$, with $M:= a+b=c+d$, and $a$ and $c$ have orders $q$ and $r$, respectively, modulo $M$.
\begin{enumerate}
    \item If $\gcd(a,b) \neq \gcd(c,d)$ and at least one of $q,r$ is odd, then there are no $\longstitch$-hitomezashi patterns.
    \item If $\gcd(a,b) \neq \gcd(c,d)$ and $q,r$ are both even, then every $\longstitch$-hitomezashi pattern consists entirely of rectangles (as described in Lemma~\ref{lem:rectangle-characterization}), and the number of such patterns is $2^{M/q+M/r}\cdot (M/2)!$.
    \item If $\gcd(a,b)=\gcd(c,d)$ and at least one of $q,r$ is odd, then every $\longstitch$-hitomezashi pattern consists entirely of zig-zags of a single type, and the number of such patterns is $2M$.
    \item If $\gcd(a,b)=\gcd(c,d)$ and $q,r$ are both even, then every $\longstitch$-hitomezashi pattern consists either entirely of rectangles (as described in Lemma~\ref{lem:rectangle-characterization}) or entirely of zig-zags of a single type, and the total number of such patterns is $2^{M/q+M/r}\cdot (M/2)!+2M$.
\end{enumerate}
\end{theorem}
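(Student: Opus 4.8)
The plan is to feed the structural dichotomy of Lemma~\ref{lem:dichotomy} into the two enumeration results already available. By that lemma, every $\longstitch$-hitomezashi pattern either has all of its strands rectangles or has all of its strands zig-zags of a single (positive or negative) type; these two families are disjoint because a rectangle is a loop while a zig-zag is infinite, and a pattern of positive zig-zags is never a pattern of negative zig-zags. So the entire problem reduces to deciding, in each of the four cases, which of these families are nonempty and how large they are.

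Two bookkeeping observations make the case labels transparent. Since $a+b=c+d=M$, we have $b\equiv-a$ and $d\equiv-c\pmod M$, so $a$ and $b$ share the additive order $q$ modulo $M$ and $c$ and $d$ share the order $r$; hence the condition that $a,b,c,d$ each have even order modulo $M$ (from Lemma~\ref{lem:rectangle-quadruples}) is exactly the condition that $q$ and $r$ are both even. Also, because $\gcd(a,b,c,d)=1$, any integer dividing both $\gcd(a,b)$ and $\gcd(c,d)$ must equal $1$, so the hypothesis $\gcd(a,b)=\gcd(c,d)$ forces the common value to be $1$; that is, $\gcd(a,b)=\gcd(c,d)$ is equivalent to $\gcd(a,b)=\gcd(c,d)=1$, which is precisely the hypothesis of Lemma~\ref{lem:zig-zag}.

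Now I would count the two families. For rectangles: Lemma~\ref{lem:rectangle-quadruples} shows that a pattern of rectangles can exist only when $q$ and $r$ are both even, and conversely, when $q,r$ are both even, Lemma~\ref{lem:rectangle-characterization} gives a bijection between the nonempty set $\mathcal{X}_{\longstitch}=\{0,1\}^{M/q}\times\{0,1\}^{M/r}\times S_{M/2}$ and the set of patterns of rectangles, so there are exactly $2^{M/q+M/r}\cdot(M/2)!$ of them, irrespective of the $\gcd$ condition. For zig-zags: part~(1) of Lemma~\ref{lem:zig-zag} shows the zig-zag family is empty unless $\gcd(a,b)=\gcd(c,d)=1$, and part~(2) shows it is nonempty in that case; it remains to see that there are exactly $M$ patterns of each type. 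Here I would use that a pattern of positive zig-zags is uniquely determined by any one of its strands (part~(1) of Lemma~\ref{lem:zig-zag}) and that every positive zig-zag is a translate of a fixed one, so these patterns form a single $\mathbb{Z}^2$-translation orbit; a fixed such pattern is invariant under $M\cdot\mathbb{Z}^2$ (Remark~\ref{rem:zig-zag}) and under translation by $(a,c)$, and the lattice $M\cdot\mathbb{Z}^2+\mathbb{Z}(a,c)$ has index $M$ in $\mathbb{Z}^2$ since $\gcd(a,M)=\gcd(a,b)=1$, while the explicit horizontal grid labels in the proof of Lemma~\ref{lem:zig-zag} show that the $M$ horizontal translates of a fixed pattern are pairwise distinct; this yields exactly $M$ patterns of positive zig-zags, and the reflection exchanging the two types yields $M$ of negative zig-zags, for $2M$ in total.

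Finally I would assemble the cases. When $\gcd(a,b)\ne\gcd(c,d)$ there are no zig-zag patterns, so the total count is $0$ in case~(1) (some order odd, hence no rectangle patterns) and $2^{M/q+M/r}(M/2)!$ in case~(2) (both orders even); when $\gcd(a,b)=\gcd(c,d)$ there are $2M$ zig-zag patterns, so the total is $2M$ in case~(3) and $2^{M/q+M/r}(M/2)!+2M$ in case~(4), with the structural descriptions in each case immediate from Lemma~\ref{lem:dichotomy} and the disjointness noted above. I expect the zig-zag enumeration to be the main obstacle: Lemma~\ref{lem:zig-zag} rigidly constrains the shape of a zig-zag pattern but stops short of counting, so the delicate step is to confirm that the compatibility condition determines the vertical grid labels from the horizontal ones and that translation produces exactly $M$ distinct patterns of each type; everything else is a direct appeal to the preceding lemmas and the elementary equivalences of the second paragraph.
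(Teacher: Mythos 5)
Your proposal is correct and follows essentially the same route as the paper: the dichotomy of Lemma~\ref{lem:dichotomy} combined with Lemmas~\ref{lem:rectangle-quadruples}, \ref{lem:rectangle-characterization}, and~\ref{lem:zig-zag}, with the rectangle count read off as $|\mathcal{X}_{\longstitch}|=2^{M/q+M/r}\cdot(M/2)!$ and the case analysis governed by the equivalences $q,r$ even $\Leftrightarrow$ all of $a,b,c,d$ have even order and $\gcd(a,b)=\gcd(c,d)\Leftrightarrow\gcd(a,b)=\gcd(c,d)=1$. Your only deviation is in justifying the $2M$ zig-zag count, which you obtain by an orbit--stabilizer argument on translations (the stabilizer contains the index-$M$ lattice $M\cdot\mathbb{Z}^2+\mathbb{Z}(a,c)$, and the $M$ horizontal translates are distinct), whereas the paper notes directly that each of the $M$ choices of $\varepsilon_0$ determines a unique pattern of each zig-zag type; both arguments are valid.
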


\begin{proof}
The classification result follows immediately from Lemmas~\ref{lem:dichotomy} (dichotomy), \ref{lem:rectangle-quadruples} (quadruples allowing rectangles), \ref{lem:rectangle-characterization} (characterization of rectangles), and~\ref{lem:zig-zag} (characterization  of zig-zags).  To enumerate patterns consisting of rectangles, note that
$$|\mathcal{X}_{\longstitch}|=2^{M/q+M/r} \cdot (M/2)!.$$
To enumerate patterns consisting of zig-zags, note that there are $M$ choices for the horizontal grid label $\varepsilon_0$ and that, for each such choice, there is a unique pattern consisting of positive zig-zags and a unique pattern consisting of negative zig-zags.
\end{proof}

\subsection{The $a=b$ case: rectangles and accordions}\label{sec:accordion}

In this last subsection, we treat the ``non-generic'' cases where $a=b$ or $c=d$.  Since these two cases are identical (just rotate the entire pattern), we will work with the case where $a=b$, which implies that $c \neq d$ by our standing assumptions that $\gcd(a,b,c,d)=1$ and $(a,b,c,d)\neq (1,1,1,1)$.  It turns out that now a single $\longstitch$-hitomezashi pattern can contain strands of different types, so we give a unified treatment.  Our first lemma is analogous to Lemma ~\ref{lem:rectangle-quadruples} from Section~\ref{sec:rectangle}.

\begin{lemma}\label{lem:a=b-triples}
Suppose that $(a,c,d) \neq (1,1,1)$ and $\gcd(a,c,d) = 1$, and set $M := 2a = c+d$ (so $c \neq d$).  Let $H$ be an $(a,a,c,d)$-hitomezashi pattern.  Then the vertical grid labels satisfy the relation $\eta_j=\eta_{j+a}$ for all $j \in \mathbb{Z}$.  Moreover, each strand in $H$ is either a rectangle or a horizontal accordion, and $c$ (equivalently, $d$) has even order modulo $M$.
\end{lemma}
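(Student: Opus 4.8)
The plan is to first establish the periodicity relation $\eta_j = \eta_{j+a}$ for the vertical grid labels, then use it to classify the strands, and finally deduce the parity of the order of $c$ modulo $M$ from the strand classification. For the first step, I would argue exactly as in the paragraph of the remarks immediately preceding Section~\ref{sec:rectangle}: suppose there are vertical stitches at longitude $j$ with endpoints $(j,k)$ and $(j,k+c)$ for appropriate $k$. Since $a = b$, the point $(j, k)$ can be the right endpoint of a horizontal stitch $[(j-a,k),(j,k)]$ or the left endpoint of a horizontal stitch $[(j,k),(j+a,k)]$; the key point is that in \emph{both} situations, the compatibility condition forces a vertical stitch at longitude $j+a$ (and at longitude $j-a$) whose endpoints match up with the horizontal stitches emanating from the row $y=k$ and the row $y = k+c$. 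Working this out shows that whenever longitude $j$ carries a vertical stitch with a certain label $\eta_j$, longitude $j \pm a$ carries one with the same label; since $2a = M$ and every longitude carries vertical stitches, we get $\eta_j = \eta_{j+a}$ for all $j$.

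Next, for the strand classification: fix a strand $S$ and consider a vertical stitch of $S$, say $[(j,k),(j,k+c)]$. Its top endpoint $(j,k+c)$ is the endpoint of a horizontal stitch going either east, to $(j+a, k+c)$, or west, to $(j-a,k+c)$; similarly the bottom endpoint is joined to $(j\pm a, k)$. The relation $\eta_j = \eta_{j+a}$ means that longitude $j+a$ carries a vertical stitch with the same offset, and a short case analysis on the four endpoint-connection choices shows that $S$ either closes up immediately into a $4$-stitch rectangle (when the two horizontal stitches incident to a single vertical stitch of $S$ go in opposite directions and the adjacent vertical stitches line up to close the cycle) or else the horizontal stitches incident to consecutive vertical stitches of $S$ alternate east, west, east, west, \ldots, which is precisely the pattern defining a horizontal accordion. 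The point is that a vertical \emph{zig-zag}–style behavior is ruled out because $\eta_j = \eta_{j+a}$ pins down the vertical stitch at the neighboring longitude; I expect this case analysis to be the main technical obstacle, though it is conceptually the same kind of ``forcing'' argument used in Lemmas~\ref{lem:3-stitches} and~\ref{lem:zig-zag}. (Compare also the remarks at the end of Section~\ref{sec:zig-zag}, which explain why zig-zags cannot occur when $a = b$.)

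Finally, to show that $c$ has even order modulo $M$: if $H$ contains a rectangle, then the argument from the first paragraph of the proof of Lemma~\ref{lem:rectangle-quadruples}, applied with the roles of the two coordinates swapped (using $c \neq d$ in place of $a \neq b$), shows that $c$ has even order modulo $M$. If instead $H$ contains a horizontal accordion $A$, I would track the latitudes of its successive vertical stitches: as one traverses $A$, the vertical stitches occur at latitudes $k, k+c, k+2c, \ldots$ (each horizontal step of the accordion does not change the latitude, and each vertical step increases it by $c$ in the chosen orientation), and $A$ returns to its starting latitude modulo $M$ only after an \emph{even} number of vertical stitches, because consecutive horizontal stitches of an accordion alternate direction and hence a genuine period requires the net horizontal displacement to vanish, forcing an even count. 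This even count means $c$ has even order modulo $M$. Either way the conclusion holds, which completes the proof.
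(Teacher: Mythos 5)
Your first step (the relation $\eta_j=\eta_{j+a}$) follows the same route as the paper, and your strand classification is workable in outline, though you state the dichotomy backwards: a vertical stitch whose two incident horizontal stitches extend in the \emph{same} direction closes up into a rectangle (using $\eta_j=\eta_{j+a}$ to supply the opposite vertical side), while \emph{opposite} directions propagate a horizontal accordion. (The paper shortcuts the case analysis you defer: normalize so the vertical stitch is $[(0,0),(0,c)]$, note $\varepsilon_0,\varepsilon_c\in\{0,a\}$, and observe that $\varepsilon_0=\varepsilon_c$ yields rectangles while $\varepsilon_0\neq\varepsilon_c$ yields a horizontal accordion through all stitches at those two latitudes.) The genuine gap is in your argument that $c$ has even order modulo $M$. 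Your accordion case rests on an incorrect picture of a horizontal accordion: its vertical stitches do not march through latitudes $k,k+c,k+2c,\ldots$; the whole strand is confined to the strip between latitudes $k$ and $k+c$, its vertical stitches alternate up/down, and its horizontal stitches all advance in the same direction (it is the vertical stitches, not the horizontal ones, that alternate). So there is no ``return to the starting latitude'' to count, and more fundamentally no single-strand argument can succeed: one accordion (or one rectangle) is locally consistent for \emph{any} $c$, and the parity obstruction is global, coming from the requirement that every horizontal grid line carry stitches compatible with the vertical structure.

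Your rectangle case has a related problem: the proof of Lemma~\ref{lem:rectangle-quadruples} (the left-/right-longitude alternation) is carried out under the hypothesis that \emph{every} strand is a rectangle, whereas in the $a=b$ setting a single pattern can mix rectangles and accordions (see Figure~\ref{fig: example figure}). If you define bottom-/top-latitudes via rectangles, a latitude whose strands are accordions is neither, the ``each latitude is bottom or top'' dichotomy fails, and the alternating chain breaks as soon as it meets such a latitude; and in a pattern consisting entirely of accordions only your (broken) accordion case applies. What is needed---and what the paper's terse remark intends---is to call a latitude $i$ a bottom- (respectively, top-) latitude according to whether the horizontal-stitch endpoints at latitude $i$ are bottom (respectively, top) endpoints of vertical stitches; this is well defined for every latitude because, by $\eta_j=\eta_{j+a}$, all those endpoints lie at longitudes with the same vertical label, and it makes sense for accordions as well as rectangles. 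One then checks that if $i$ is a bottom-latitude then $i+c$ is a top-latitude and vice versa (if $i+c$ were also a top-latitude, there would be vertical stitches spanning $[i,i+c]$, forcing $i$ to be a bottom-latitude), that the type is $M$-periodic in $i$, and hence that iterating $i\mapsto i+c$ from a bottom-latitude back to itself modulo $M$ requires an even number of steps, i.e., $c$ has even order modulo $M$.
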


\begin{proof}
To see the first claim, suppose that $\eta_0=0$, i.e., there is a vertical stitch from $(0,0)$ to $(0,c)$.  Then $(a,0)$ and $(a,c)$ are the endpoints of horizontal stitches.  Since $c \neq d$, we see that there must also be a vertical stitch from $(a,0)$ to $(a,c)$, i.e., we also have $\eta_a=0$.  By shift-invariance, this establishes the first part of the lemma.

Let us continue the computation from the previous paragraph.  Note that $\varepsilon_0, \varepsilon_c \in \{0,a\}$.  If $\varepsilon_0=\varepsilon_c$, then all of the strands of $H$ involving horizontal stitches at the latitudes $0$ and $c$ are rectangles.  If instead $\varepsilon_0 \neq\varepsilon_c$, then there is a single horizontal accordion that passes through all of the horizontal stitches at the latitudes $0$ and $c$.  This establishes the second part of the lemma.

For the third part of the lemma, note that we can define top-latitudes and bottom-latitudes as in the proof of Lemma~\ref{lem:rectangle-quadruples} and likewise argue that $c$ has even order modulo $M$.
\end{proof}

We now set up notation for an explicit parameterization of the possible $(a,a,c,d)$-hitomezashi patterns.  Let $a,c,d$ be positive integers, not all equal to $1$, such that $\gcd(a,c,d)=1$, $2a=c+d=M$ (so $c \neq d$), and $c$ has even order $q$ modulo $M$.  We define a map $\psi$ (depending implicitly on $a,c,d$) from
$$\mathcal{Y}_{(a,c,d)}:=\{0,1\}^{M/q}\times \{0,1\}^\mathbb{Z} \times S_{M/2}$$
to the set of $(a,a,c,d)$-hitomezashi patterns, as follows.  (Here, $\{0,1\}^\mathbb{Z}$ is the set of all bi-infinite $\{0,1\}$-sequences, with positions indexed by the integers.)  Consider an element
$$\vec{y}=((u_1, \ldots, u_{M/q}), (\ldots, v_{-1}, v_0, v_1, \ldots), \sigma) \in \mathcal{Y}_{(a,c,d)}.$$
Define $\widetilde{u}$ as in the discussion preceding Lemma~\ref{lem:rectangle-characterization} (using $c$ in place of $a$).  Finally, let $\psi(\vec{y})$ be the $(a,a,c,d)$-hitomezashi pattern $H$ defined as follows:
\begin{itemize}
    \item The set of south endpoints of vertical stitches is $$\{(1, \widetilde{u}_{\sigma(1)}), (2, \widetilde{u}_{\sigma(2)}), \ldots, (a, \widetilde{u}_{\sigma(a)})\}+(a\mathbb{Z} \times M\mathbb{Z}).$$ Equivalently, the vertical grid label $\eta_j$ is $\widetilde{u}_{\sigma(\overline{j})} \pmod{M}$, where $\overline{j}$ is the reduction of $j$ modulo $a$.  Note that this step completely determines the set of points appearing as the endpoints of stitches.
    \item For each latitude $i$, there are two possibilities for placing the horizontal stitches (consistent with the placement of the vertical stitches).  One of these possibilities creates a horizontal stitch whose west endpoint is in the interval $[0,a-1]$, and the other possibility creates a horizontal stitch whose west endpoint is in the interval $[a,M-1]$.  Choose the former if $v_i=0$ and the latter if $v_i=1$.  
\end{itemize}

The following lemma, which is analogous to Lemma~\ref{lem:rectangle-characterization}, shows that, via $\psi,$ the set $\mathcal{Y}_{(a,c,d)}$ in fact parameterizes the $(a,a,c,d)$-hitomezashi patterns.

\begin{lemma}\label{lem:a=b-parameterization}
Let $a,c,d$ be positive integers, not all equal to $1$, such that $\gcd(a,c,d)=1$, $2a=c+d=M$ (so $c \neq d)$, and $c$ has even order $q$ modulo $M$.  Then the map $\psi$ is a bijection between $\mathcal{Y}_{(a,c,d)}$ and the set of $(a,a,c,d)$-hitomezashi patterns.
\end{lemma}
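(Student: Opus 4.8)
The plan is to mimic the structure of the proof of Lemma~\ref{lem:rectangle-characterization}: show that $\psi$ is well-defined (lands in the set of $(a,a,c,d)$-hitomezashi patterns) and injective, and then show surjectivity by reverse-engineering the data $\vec y=((u_i),(v_i),\sigma)$ from an arbitrary pattern $H$. For well-definedness, I first need to check that the described placement of vertical stitches is consistent, i.e., that the grid labels $\eta_j$ are genuinely well-defined modulo $M$: this uses that $c$ has even order $q$ modulo $M$ (so the $\hat u$-blocks each have length $q/2$ and $\widetilde u$ has length $(M/q)\cdot(q/2)=M/2$, matching $S_{M/2}$) together with Lemma~\ref{lem:a=b-triples}, which guarantees $\eta_j=\eta_{j+a}$ so that indexing $\eta_j$ by $\overline j\in\mathbb Z/a\mathbb Z$ is forced rather than an extra choice. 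Then, for each latitude $i$, once all vertical stitches are placed, the set of lattice points that must be covered by horizontal stitches at that latitude is determined, and because $a=b$ there are exactly two ways to tile that (periodic) set of points by horizontal stitches of the $a$-over-$a$-under form; the bit $v_i$ selects one of them by the stated west-endpoint convention. One should check that with either choice the endpoints of the horizontal stitches coincide exactly with the endpoints of the vertical stitches, so that $\psi(\vec y)$ really is an $(a,a,c,d)$-hitomezashi pattern; this is the ``one can easily check'' step analogous to the end of Lemma~\ref{lem:zig-zag}(2), and it is routine.

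Injectivity is straightforward: from $\psi(\vec y)$ one reads off $(v_i)$ directly from the west endpoints of the horizontal stitches at each latitude, and one reads off $\widetilde u\circ\sigma$ (hence, after sorting the $\widetilde u$-values into their canonical block order, both $(u_i)$ and $\sigma$) from the south endpoints of the vertical stitches. So the crux is surjectivity. Given an $(a,a,c,d)$-hitomezashi pattern $H$, I first invoke Lemma~\ref{lem:a=b-triples} to get $\eta_j=\eta_{j+a}$ for all $j$; combined with the fact that (by the argument in the first paragraph of Lemma~\ref{lem:rectangle-quadruples}, which applies to the vertical direction since $c\neq d$) vertical structure is $M$-periodic in the $y$-direction, the vertical grid labels $\eta_j$ depend only on $j\bmod a$ and take values in $\mathbb Z/M\mathbb Z$. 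Now I group the residues $j\bmod a$ according to their residue modulo $M/q$ (using $c$ in place of $a$, exactly as in the rectangle case): for each $1\le i\le M/q$, the $\eta$-values at longitudes congruent to $i$ form a coset of the subgroup generated by $2c$, and comparing against the two possible cosets $\{i,i+2c,\ldots\}$ and $\{i+c,i+3c,\ldots\}$ defines $u_i\in\{0,1\}$. This produces $\widetilde u$, and then the permutation $\sigma\in S_{M/2}$ is defined by matching each longitude in $[1,a]$ (equivalently, each index $k$) to the block-position $\ell$ with $\eta$-value $\widetilde u_{\ell}$, i.e.\ $\sigma(k)=\ell$ where $\widetilde u_{\sigma(k)}$ equals the actual vertical grid label at longitude $k$. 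Finally, for each latitude $i$, the west endpoints of the horizontal stitches of $H$ lie in one of the two intervals $[0,a-1]$ or $[a,M-1]$ modulo $M$ (by the $a=b$ dichotomy of how horizontal stitches attach to a fixed set of vertical-stitch endpoints), and this choice defines $v_i$. By construction $\psi((u_i),(v_i),\sigma)$ has the same vertical grid labels and the same horizontal stitches as $H$, so the two patterns agree; periodicity in the $y$-direction (and the fact that all data were extracted from a single $M$-tall, $a$-wide fundamental domain) ensures this equality everywhere.

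The main obstacle I anticipate is the bookkeeping in the surjectivity argument: specifically, verifying that the $\eta$-values along a fixed residue class modulo $M/q$ really do exhaust exactly one of the two target cosets of $\langle 2c\rangle$ — this is where the even-order hypothesis on $c$ is essential, since an odd order would collapse the two cosets into one and break the parameterization. This step is the analogue of the ``set of longitudes of west sides is $\{i,i+2a,\ldots\}$ or $\{i+a,\ldots\}$'' sentence in Lemma~\ref{lem:rectangle-characterization}, but here it must be run purely at the level of grid labels rather than rectangle sides, because $H$ may contain horizontal accordions and not just rectangles. Once that is in hand, the rest is a matter of confirming that the independently-chosen bits $(v_i)$ impose no hidden constraints — which is precisely the content of Lemma~\ref{lem:a=b-triples}'s observation that the horizontal direction is genuinely free at each latitude when $a=b$ — and the two patterns agree by the periodicity already established.
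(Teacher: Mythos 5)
Your proof is correct and essentially the paper's: well-definedness and injectivity are treated as immediate, and surjectivity proceeds by extracting $(u_i)$, $(v_i)$, and $\sigma$ from a given pattern $H$ (reading the $u$-data off the coset dichotomy for south endpoints, $\sigma$ off the vertical grid labels in $[1,a]$, and $v_i$ off whether $\varepsilon_i\in[0,a-1]$ or $[a,M-1]$) and then invoking Lemma~\ref{lem:a=b-triples} to propagate agreement from a fundamental strip to the whole pattern. The only (harmless) wrinkle is that you phrase the coset dichotomy in terms of longitude classes modulo $M/q$, whereas the blocks of $\widetilde{u}$ are indexed by latitude classes; under the lemma's hypotheses one in fact has $\gcd(c,M)=M/q=1$ (because $\gcd(a,c,d)=1$ forces $\gcd(c,d)\mid 2$, and $\gcd(c,d)=2$ would make $q=a$ odd), so the two formulations coincide and your key claim reduces to the paper's statement that the set of south-endpoint latitudes, i.e.\ the set of values $\eta_j$, is one of the two cosets of $\langle 2c\rangle$.
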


\begin{proof}
It is immediate that every $\psi(\vec{y})$ is an $(a,a,c,d)$-hitomezashi pattern and that $\psi$ is injective.  It remains to show that $\psi$ is surjective.  Let $H$ be an $(a,a,c,d)$-hitomezashi pattern.  For each $1 \leq j \leq M/q$, consider the horizontal stitches at latitudes equivalent to $j$ modulo $M/q$.  The set of latitudes (modulo $M$) of the south endpoints of these stitches is either $\{j, j+2c, \ldots, j+(q-2)c\}$ or $\{j+c, j+3c, \ldots, j+(q-1)c\}$.  Set $u_j=0$ in the first case and $u_j=1$ in the second case, and consider the sequence $\widetilde{u}$ (as defined in the set-up for the lemma).  For each $1 \leq k \leq a$, there is a unique $\sigma(k) \in [M/2]$ such that $(k, \widetilde{u}_{\sigma(k)})$ is the south endpoint of a vertical stitch of $H$; this defines the permutation $\sigma \in S_{M/2}$.  Finally, for each $i \in \mathbb{Z}$, set $v_i=0$ if $\varepsilon_i \in [0,a-1]$, and set $v_i=1$ if instead $\varepsilon_i \in [a,M-1]$.  We see that
$$\psi((u_1, \ldots, u_{M/q}), (\ldots, v_{-1}, v_0, v_1, \ldots), \sigma)$$
agrees with $H$ on the strip $[1,a] \times \mathbb{Z}$.  By the first part of Lemma~\ref{lem:a=b-triples}, we conclude that in fact these two patterns agree everywhere.
\end{proof}

We now re-cast this lemma as a classification theorem---the $a=b$ counterpart of Theorem~\ref{thm:classification-generic}. See Figure~\ref{fig: example figure} for a finite piece of a $(2,2,3,1)$-hitomezashi pattern described by this classification. 

\begin{theorem}[Classification theorem, $a=b$ case]\label{thm:classification-a=b}
Let $a,c,d$ be positive integers, not all equal to $1$, such that $\gcd(a,c,d)=1$, with $M:=2a=c+d$ (so $c \neq d)$, and $c$ has order $q$ modulo $M$.
\begin{enumerate}
    \item If $q$ is odd, then there are no $(a,a,c,d)$-hitomezashi patterns.
    \item If $q$ is even, then every $(a,a,c,d)$-hitomezashi pattern consists of a combination of rectangles and horizontal accordions (as described in Lemma~\ref{lem:a=b-parameterization}), and the set of all such patterns has cardinality equal to the cardinality $\aleph_1$ of the continuum.
\end{enumerate}
\end{theorem}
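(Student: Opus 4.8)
The plan is to deduce both parts directly from the two lemmas of this subsection; essentially all of the substance has already been established. For part (1), suppose toward a contradiction that $q$ is odd yet some $(a,a,c,d)$-hitomezashi pattern $H$ exists. The final assertion of Lemma~\ref{lem:a=b-triples} says that $c$ then has even order modulo $M$, i.e.\ $q$ is even, a contradiction; hence no such pattern exists.

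For part (2), assume $q$ is even. The structural claim---that every $(a,a,c,d)$-hitomezashi pattern decomposes into rectangles and horizontal accordions---is exactly the second assertion of Lemma~\ref{lem:a=b-triples} applied strand by strand, and the statement that these patterns are precisely the images under $\psi$ is the content of Lemma~\ref{lem:a=b-parameterization}. It therefore remains only to count. Since $\psi$ is a bijection from $\mathcal{Y}_{(a,c,d)}=\{0,1\}^{M/q}\times\{0,1\}^{\mathbb{Z}}\times S_{M/2}$ to the set of $(a,a,c,d)$-hitomezashi patterns, the number of such patterns equals $|\{0,1\}^{M/q}|\cdot|\{0,1\}^{\mathbb{Z}}|\cdot|S_{M/2}|$. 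Here the additive order of $c$ in $\mathbb{Z}/M\mathbb{Z}$ is $M/\gcd(c,M)$, so $M/q=\gcd(c,M)=\gcd(c,d)$ is a positive integer and $\{0,1\}^{M/q}$ is a nonempty finite set; likewise $S_{M/2}$ is a nonempty finite set. The middle factor $\{0,1\}^{\mathbb{Z}}$ has cardinality $2^{\aleph_0}$, the cardinality of the continuum, and multiplying it by a nonempty finite set leaves the cardinality unchanged. This yields the claimed count.

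There is no real obstacle here: all the difficulty resides in Lemmas~\ref{lem:a=b-triples} and~\ref{lem:a=b-parameterization}. The only points meriting a sentence of care are (i) confirming that $M/q$ is a genuine positive integer, so that the finite factor $\{0,1\}^{M/q}$ makes sense and is nonempty, and (ii) observing---as a sanity check, not strictly needed given the exact bijection---that there are at most continuum-many patterns overall, since a pattern is determined by the countable family of grid labels $(\varepsilon_i)_{i\in\mathbb{Z}}$ and $(\eta_j)_{j\in\mathbb{Z}}$, each drawn from a finite alphabet.
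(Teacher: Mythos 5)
Your proposal is correct and follows essentially the same route as the paper: both parts are read off from Lemma~\ref{lem:a=b-triples} (strand structure and the even-order condition) together with the bijection $\psi$ of Lemma~\ref{lem:a=b-parameterization}, and the count is $|\mathcal{Y}_{(a,c,d)}|=2^{M/q}\cdot 2^{\aleph_0}\cdot (M/2)!$, which has the cardinality of the continuum. Your extra checks (that $M/q=\gcd(c,d)$ is a positive integer, and the upper bound via the countably many grid labels) are harmless refinements of the paper's one-line computation.
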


\begin{proof}
The classification result follows immediately from Lemma~\ref{lem:a=b-parameterization} (characterization of patterns) and the second part of Lemma~\ref{lem:a=b-triples} (characterization of strands).  To enumerate patterns, note that
\[|\mathcal{Y}_{(a,c,d)}|=2^{M/q+\aleph_0} \cdot (M/2)!=\aleph_1. \qedhere\]
\end{proof}

\begin{figure}[ht]
  \begin{center}\includegraphics[height=4.51cm]{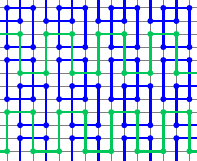}
  \end{center}
  \caption{A finite portion of a $(2,2,3,1)$-hitomezashi pattern. Rectangles appear in blue, while horizontal accordions appear in green. }\label{fig: example figure}
\end{figure}

\section{Long stitches on the triangular grid}\label{sec:long-triangular}

In this section, we briefly study the long-stitch variation of hitomezashi patterns on the triangular grid.  Although ordinary hitomezashi on the triangular grid is quite mysterious and difficult to analyze (see Section~\ref{sec:conclusion-triangular}), it turns out that the long-stitch version is much more rigid.  We define an \dfn{$(a,b)$-triangular hitomezashi pattern} to be a pattern of stitches on the triangular grid in which the stitches follow an $a$-over-$b$-under pattern in each of the three directions and the set of endpoints of stitches is the same for each of the three directions.  It is also possible to make an ``asymmetric'' version of this definition in which the different directions have stitches of different lengths, but we leave studying this generalization as an open problem.

As in the case of $(a,b,c,d)$-hitomezashi patterns (for the square grid), we can restrict our attention to the $(a,b)$-triangular hitomezashi patterns where $\gcd(a,b)=1$.  We will have three directions of stitches, at $60^{\circ}$/$120^{\circ}$ angles to each other, with one of them being horizontal. By a slight abuse of terminology, we will refer to these three directions in the triangular grid as east/west, northeast/southwest, and northwest/southeast.  Our first lemma shows that if two stitches meet at a $120^\circ$ angle, then these stitches can be ``extended'' to an entire six-stitch hexagon.

\begin{lemma}\label{lem:hexagon-completion}
Let $a,b\geq 1$ be integers, not both equal to $1$, such that $\gcd(a,b)=1$, and let $H$ be an $(a,b)$-triangular hitomezashi pattern.  Suppose $H$ contains two stitches $s_1,s_2$ that meet at a $120^\circ$ angle.  Then $H$ contains a six-stitch regular hexagon in which $s_1, s_2$ appear as consecutive sides.
\end{lemma}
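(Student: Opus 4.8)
The plan is to fix coordinates adapted to the triangular lattice, move $s_1$ and $s_2$ into a standard position, and then ``close up'' the hexagon one side at a time, at each step using the rigidity that comes from $a\neq b$ (which holds because $\gcd(a,b)=1$ and $a,b$ are not both $1$). Identify the triangular lattice with $\mathbb{Z}\langle 1,\omega\rangle\subset\mathbb{C}$, where $\omega=e^{i\pi/3}$, so that $\omega^2=\omega-1$, and write a lattice point as $(p,q):=p+q\omega$; the three stitch directions are then $1$, $\omega$, and $\omega^2$. The symmetry group of the lattice (translations, rotations by multiples of $60^\circ$, and reflections) preserves the class of $(a,b)$-triangular hitomezashi patterns and acts transitively on configurations of two stitches meeting at a $120^\circ$ angle, so after applying a suitable symmetry to $H$ we may assume that $s_1=[(0,0),(a,0)]$ and $s_2=[(a,0),(a,a)]$. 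The unique regular hexagon having $s_1$ and $s_2$ as consecutive sides then has vertices $H_0=(0,0)$, $H_1=(a,0)$, $H_2=(a,a)$, $H_3=(0,2a)$, $H_4=(-a,2a)$, $H_5=(-a,a)$, with sides $s_i:=[H_{i-1},H_i]$ (indices read modulo $6$). It remains to show that $s_3$, $s_4$, $s_5$, and $s_6$ all belong to $H$.

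The crux is the following claim: \emph{if $s_{k-1}$ and $s_k$ are consecutive sides of this hexagon and both belong to $H$, then $s_{k+1}$ also belongs to $H$.} To prove it, let $v:=H_k$ be the common vertex of $s_k$ and $s_{k+1}$. Since $v$ is an endpoint of $s_k\in H$, the defining property of an $(a,b)$-triangular hitomezashi pattern---that the sets of stitch endpoints in the three directions coincide---forces $v$ to be an endpoint of some stitch of $H$ in the direction of $s_{k+1}$. The only two stitches of that direction that could have $v$ as an endpoint are $s_{k+1}$ itself and the stitch $s_{k+1}'$ that extends from $v$ in the same direction the opposite way, so it suffices to rule out $s_{k+1}'\in H$. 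A short computation (using $\omega^2=\omega-1$) shows that the far endpoint $w$ of $s_{k+1}'$ lies on the lattice line $\ell$ carrying $s_{k-1}$, and that, in the parametrization of $\ell$ in which $s_{k-1}$ runs from position $0$ to position $a$, the point $w$ sits at position $2a$. But $s_{k-1}\in H$, so along $\ell$ the stitch endpoints of $H$ are exactly the points at positions congruent to $0$ or $a$ modulo $a+b$; since $a\neq b$, a one-line check gives $2a\not\equiv 0,a\pmod{a+b}$, so $w$ is not a stitch endpoint of $H$. This contradicts $s_{k+1}'\in H$, establishing the claim.

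Granting the claim, I finish by applying it four times in succession: from $s_1,s_2\in H$ we get $s_3\in H$; then from $s_2,s_3\in H$ we get $s_4\in H$; then $s_5\in H$; then $s_6\in H$. All six sides of the hexagon then lie in $H$, completing the proof. I expect the main obstacle to be the bookkeeping inside the claim: one must verify, in each of the four instances, that the ``wrong'' stitch $s_{k+1}'$ at $v$ terminates on the line of $s_{k-1}$---the side \emph{two} steps earlier, not the adjacent side---and precisely at the offset $2a$ that the presence of $s_{k-1}$ forbids. This is a routine calculation in the coordinates above (it amounts to tracking how adding $\pm a\cdot 1$, $\pm a\omega$, or $\pm a\omega^2$ moves a point, using $\omega^2=\omega-1$), but it is the one genuinely geometric ingredient; the appeal to the equal-endpoint-sets condition and the congruence $2a\not\equiv 0,a\pmod{a+b}$ are purely formal.
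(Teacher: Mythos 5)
Your proof is correct and follows essentially the same route as the paper: place the two stitches in a standard position, show that at each successive vertex the ``wrong'' continuation is impossible because its far endpoint would land on the line of the stitch two steps back at offset $2a$, which is incompatible with the $a$-over-$b$-under phase pinned down by that stitch when $a\neq b$, and then iterate around the hexagon. The paper phrases the exclusion as forcing a second stitch in the direction of $s_{k-1}$ at its shared vertex rather than via your congruence $2a\not\equiv 0,a\pmod{a+b}$, but these are the same argument.
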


\begin{proof}
Without loss of generality, we may assume that $s_1=[x,y]$ and $s_2=[y,z]$, where $y$ is northwest of $x$ and $z$ is northeast of $y$.  We claim that the east/west stitch incident to $z$ must extend to the east.  Indeed, if instead the east/west stitch incident to  $z$ extends to the west, then the west endpoint $w$ of this stitch lies a distance $a$ to the northwest of $y$; since $a \neq b$, this implies that the northwest/southeast stitch incident to $y$ has its other endpoint at $w$, but this contradicts our assumption that $s_1$ is the northwest/southeast stitch incident to $y$. 

The claim lets us ``extend'' the path with edges $s_1,s_2$ by the east/west stitch $s_3$ whose west endpoint is $z$.  By iterating this procedure, we obtain a regular hexagon with edges $s_1,s_2,s_3,s_4,s_5,s_6$, as desired.
\end{proof}

Note that exchanging the roles of $a,b$ corresponds to flipping over the cloth on which we are sewing; hence, we can restrict our attention to the case where $a>b$.  We now derive further structure from the presence of a single hexagon.

\begin{lemma}\label{lem:a=2b}
Let $a>b$ be natural numbers such that $\gcd(a,b)=1$, and let $H$ be an $(a,b)$-triangular hitomezashi pattern.  Then $a=2$ and $b=1$, and all pairs of incident stitches meet at $120^\circ$ angles.
\end{lemma}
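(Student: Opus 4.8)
The plan is to exploit the rigidity established in Lemma~\ref{lem:hexagon-completion} together with the "asymmetry" hypothesis $a \neq b$ (which holds here since $a > b$) to force a chain of hexagons that must close up consistently, and then to read off a numerical constraint that pins down $(a,b) = (2,1)$. Since $a > b$, the pattern in particular contains at least one pair of incident stitches (every endpoint is the endpoint of a stitch in each direction, and by a pigeonhole/parity argument two of these stitches at a common vertex meet at a $120^\circ$ angle); by Lemma~\ref{lem:hexagon-completion}, $H$ therefore contains a regular hexagon $P$ with sides $s_1, \dots, s_6$.

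First I would analyze how the hexagon $P$ interacts with its neighbors. Consider a vertex $v$ of $P$ at which two sides of $P$ meet; the third stitch direction at $v$ points "outward" from $P$. Tracking this outward stitch and applying Lemma~\ref{lem:hexagon-completion} again (the outward stitch together with one side of $P$ meets at $120^\circ$), we generate a new hexagon sharing a side with $P$. Iterating, we find that the hexagons tile a band, and more importantly, walking around a single hexagon and keeping track of the positions (measured in lattice coordinates) forces a relation among $a$ and $b$: the six stitches of a hexagon, read cyclically, return to the starting point, but the "over/under" schema means that the stitch endpoints are separated by displacements of size $a$ in each direction, while the hexagon itself has side length... here is where the key computation lives. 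The distance between consecutive vertices of the regular hexagon is $a$ (the stitch length), but the stitches on a given line are spaced $a+b$ apart, so the endpoints of stitches that must coincide across adjacent parallel lines force $b$ (the gap) to divide combinations of $a$ in a way that, combined with $\gcd(a,b)=1$, yields $b = 1$ and then $a = 2$.

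More concretely, the cleanest route is: take the hexagon $P$ and look at the east/west stitch $s_3$ with west endpoint $z$ constructed in the proof of Lemma~\ref{lem:hexagon-completion}. The east endpoint of $s_3$ is a new vertex, and the stitch continuing the hexagon must start there and go northwest. But there is \emph{also} a constraint from the \emph{next} east/west line down: the stitches there are offset, and for the endpoint sets in all three directions to coincide, the offset $b$ between a stitch and the start of the next stitch on the same line must be compatible with the hexagonal geometry. Carrying out the bookkeeping with the three lattice directions, one finds that a stitch of length $a$ "uses up" $a$ units, leaving a gap of $b$, and that the hexagon closure forces $2b = a \cdot(\text{something}) - \dots$; a short case analysis on small values, ruling out $b \geq 2$ by a parity or divisibility obstruction, leaves only $(a,b) = (2,1)$. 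Once $(a,b) = (2,1)$ is established, the final clause — that all pairs of incident stitches meet at $120^\circ$ angles — follows because a $60^\circ$ meeting would place two stitch endpoints at distance $2$ along two different directions from a common point in a way that (again using $a \neq b$) forces an inconsistency in the endpoint sets, exactly as in the "claim" inside the proof of Lemma~\ref{lem:hexagon-completion}.

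The main obstacle I anticipate is the bookkeeping in the second-to-last step: correctly setting up coordinates on the triangular lattice so that the "$a$-over-$b$-under" spacing condition in all three directions can be turned into a clean Diophantine statement, and then making sure the hexagon-closure argument genuinely \emph{excludes} all $(a,b)$ with $a > b \geq 1$ and $\gcd(a,b)=1$ other than $(2,1)$, rather than merely being consistent with $(2,1)$. It may be cleaner to argue by contradiction: assume $a > 2$ or $b > 1$, produce a forced stitch that overlaps or conflicts with an already-placed stitch (a collision of the "closing up into a loop too early" type seen in Theorem~\ref{thm:max-length otherwise} and Lemma~\ref{lem:zig-zag}), and derive the contradiction directly from the hexagon we already have in hand.
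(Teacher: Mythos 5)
Your overall skeleton agrees with the paper's: find two stitches meeting at $120^\circ$ at an arbitrary stitch endpoint (your pigeonhole observation is fine, since of the three stitch directions at a vertex some pair must meet at $120^\circ$), invoke Lemma~\ref{lem:hexagon-completion} to get a six-stitch hexagon, and then extract a numerical constraint forcing $(a,b)=(2,1)$. But the heart of the lemma is precisely that numerical constraint, and you never actually produce it: your account of it is ``the hexagon closure forces $2b = a\cdot(\text{something}) - \dots$'' followed by ``a short case analysis on small values,'' and you yourself flag in the last paragraph that you have not verified the argument excludes all $(a,b)\neq(2,1)$. That is a genuine gap, not just missing bookkeeping. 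Also note that chains of adjacent hexagons and band tilings are not needed at all; the constraint comes from a single hexagon. The paper's move is: let $y$ be the vertex of the hexagon directly opposite $x$, so $x$ and $y$ are stitch endpoints lying on a common east/west grid line at distance exactly $2a$. On such a line the stitches follow the $a$-over-$b$-under schema, so the distance between two stitch endpoints that exceeds $a+b$ (which $2a$ does, since $a>b$) can only be $2a+b$ (stitch, gap, stitch) or $a+2b$ (gap, stitch, gap). The first is impossible, so $2a=a+2b$, i.e.\ $a=2b$, and $\gcd(a,b)=1$ gives $(a,b)=(2,1)$. Your proposal never isolates this opposite-vertex distance computation, which is the only Diophantine input required.

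The same reading of the segment $[x,y]$ as ``gap, stitch, gap'' is what delivers the final clause: it shows the east/west stitch incident to $x$ must extend \emph{west}, away from the hexagon, so all three stitches at the arbitrary endpoint $x$ pairwise meet at $120^\circ$. Your alternative route to that clause --- assuming a $60^\circ$ incidence and claiming a contradiction ``exactly as in the claim inside the proof of Lemma~\ref{lem:hexagon-completion}'' --- does not follow from that claim as stated: that argument rules out a specific westward extension of an east/west stitch adjacent to a $120^\circ$ corner, and it is not an off-the-shelf exclusion of $60^\circ$ incidences. So both the classification of $(a,b)$ and the $120^\circ$ statement still need the concrete argument; as written, the proposal is a plausible outline consistent with the answer rather than a proof of it.
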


\begin{proof}
Fix a point $x$ that is an endpoint of stitches of $H$.  By a finite case check, we find that there must be two stitches incident to $x$ that meet at a $120^\circ$ angle.  By Lemma~\ref{lem:hexagon-completion}, these two stitches are part of a six-stitch hexagon; let $y$ be the vertex of the hexagon directly opposite $x$.  Without loss of generality, we may assume that $y$ lies due east of $x$.  Note that the distance between $x$ and $y$ is $2a$.  Since $2a>a+b$ and $a+b$ is the total length of a stitch and a non-stitch, we conclude this distance $2a$ must equal either $2a+b$ (the length of a stitch followed by a non-stitch followed by a stitch) or $a+2b$ (the length of a non-stitch followed by a stitch followed by a non-stitch).  As the former is impossible, we conclude that $a=2b$.  The condition $\gcd(a,b)=1$ implies that $a=2$ and $b=1$. Moreover, this means that the east/west stitch incident to $x$ must extend to the west.  In other words, the three stitches incident to $x$ all meet at $120^\circ$ angles.  Since $x$ was arbitrary, we conclude that all stitches in $H$ meet at $120^\circ$ angles.
\end{proof}

We can leverage the previous two lemmas to show that, up to translation, there is a unique $(a,b)$-triangular hitomezashi pattern.

\begin{theorem}[Classification theorem, $(a,b)$-triangular]\label{thm:classification-triangular}\hspace{.1cm}

\begin{enumerate}
    \item If $1\leq b<a$ are coprime integers such that $(a,b) \neq (2,1)$, then there are no $(a,b)$-hitomezashi patterns.
    \item There are exactly $3$ distinct $(2,1)$-triangular hitomezashi patterns, and they differ only by translations.
\end{enumerate}
\end{theorem}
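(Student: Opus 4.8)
The plan is to obtain part~(1) for free from Lemma~\ref{lem:a=2b} and to prove part~(2) by equipping the triangular lattice with a $\mathbb{Z}/3\mathbb{Z}$-valued ``type.'' For part~(1): if $1\le b<a$ are coprime with $(a,b)\ne(2,1)$ and some $(a,b)$-triangular hitomezashi pattern existed, then (noting that $a\ge 2$, so $a,b$ are not both equal to $1$) Lemma~\ref{lem:a=2b} would apply and force $a=2$ and $b=1$, a contradiction; hence no such pattern exists.

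For part~(2), fix a $(2,1)$-triangular hitomezashi pattern $H$. On each of the three families of grid lines the stitches form a $2$-over-$1$-under pattern, i.e.\ a period-$3$ alternation of a length-$2$ stitch with a length-$1$ gap, so along each line every lattice point is either an endpoint or the midpoint of exactly one stitch. Since the three sets of stitch endpoints coincide (by the definition of an $(a,b)$-triangular pattern), every lattice point is therefore either a \emph{triple endpoint} (an endpoint of a stitch in all three directions) or a \emph{triple midpoint}. By Lemma~\ref{lem:a=2b} the three stitches meeting at a triple endpoint are pairwise at $120^\circ$, and a short case check shows there are exactly two such local configurations, say $\triangle$ and $\nabla$ (distinguished, for instance, by whether the horizontal stitch at the vertex points east or west, the other two directions then being forced). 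Thus every lattice point carries a \emph{type} in $\{\triangle,\mu,\nabla\}$, which I identify with $\mathbb{Z}/3\mathbb{Z}$ via $\triangle\mapsto 0$, $\mu\mapsto 1$, $\nabla\mapsto 2$; write $T(m,n)$ for the type of the lattice point $me_1+ne_2$, where $e_1$ is horizontal and $e_2$ points northeast.

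I would then extract recurrences for $T$ by walking along grid lines. Along a horizontal line the types of successive lattice points run $\triangle,\mu,\nabla,\triangle,\mu,\nabla,\dots$ (the west endpoint of each horizontal stitch is $\triangle$, its east endpoint is $\nabla$, and its midpoint is $\mu$), and the orientation convention makes this cyclic order the same on \emph{every} horizontal line, so $T(m+1,n)\equiv T(m,n)+1\pmod 3$. Running the same argument along the northeast/southwest lines (where now the southwest endpoint of each such stitch is $\nabla$ and its northeast endpoint is $\triangle$) gives $T(m,n+1)\equiv T(m,n)-1\pmod 3$. Together these yield $T(m,n)\equiv m-n+c\pmod 3$ for the constant $c:=T(0,0)$; one checks directly that a type function of this form is automatically consistent with the analogous constraint along the northwest/southeast lines, so nothing further is imposed. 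In particular there are at most three patterns, one for each $c\in\mathbb{Z}/3\mathbb{Z}$.

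It remains to check the converse and to count. For each $c\in\mathbb{Z}/3\mathbb{Z}$, declaring the stitch endpoints to be the lattice points with $T\ne\mu$ and orienting each stitch as dictated by the $\triangle$/$\nabla$ types at its ends produces, on all three families of lines, a legitimate $2$-over-$1$-under pattern with matching endpoint sets (this is the same computation as the consistency check above, now read in reverse). The three patterns so obtained are pairwise distinct — for instance, their triple-midpoint sets $\{(m,n):m-n\equiv 1-c\pmod 3\}$ differ — and any unit translation carries one to another (translating the pattern for $c$ by $e_1$ yields the pattern for $c-1$), so they differ only by translations; hence there are exactly three. I expect the one genuinely delicate step to be the orientation bookkeeping in the coloring argument: one must fix the $\triangle$/$\nabla$ conventions so that the cyclic order of types is literally identical along all lines of a given family, and it is precisely this uniformity — a consequence of the global $120^\circ$ condition supplied by Lemma~\ref{lem:a=2b} — that collapses the a~priori unlimited ``phase'' freedom on the individual lines down to the single parameter $c$. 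Everything else is routine.
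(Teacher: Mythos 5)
Your proof is correct, and for part (2) it takes a genuinely different (more explicit) route than the paper. The paper also dispatches part (1) with Lemma~\ref{lem:a=2b}, but for part (2) it argues by propagation: since all incident stitches meet at $120^\circ$ angles, any single length-$2$ stitch determines the entire pattern step by step, existence of the resulting pattern is asserted via an explicit construction (the figure), and the count of $3$ comes from the three possible placements of the $2$-over-$1$-under stitches on a single fixed grid line, which are then observed to yield translates of one another. You instead classify each lattice point as a triple endpoint of one of the two $120^\circ$ configurations or a triple midpoint (using the coincidence of endpoint sets in the three directions), and derive the closed-form type function $T(m,n)\equiv m-n+c\pmod 3$; this one formula simultaneously gives the upper bound of three patterns, an explicit construction proving existence for each $c$ (your consistency check along the third family of lines, which I verified goes through), pairwise distinctness, and the fact that a unit translation permutes the three values of $c$. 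The trade-off: the paper's propagation argument is shorter and more visual but leans on a pictorial verification of validity, whereas your $\mathbb{Z}/3\mathbb{Z}$-coloring is slightly longer to set up (the orientation bookkeeping you flag is exactly the delicate point, and you handle it correctly) but yields a self-contained, coordinate-explicit description of all three patterns from which every claim of the theorem can be read off directly.
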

\begin{proof}
Part (1) of the theorem follows immediately from Lemma~\ref{lem:a=2b}.  We now consider the case where $a=2$ and $b=1$.  It follows from Lemma~\ref{lem:a=2b} that for each fixed stitch $s$ of length $2$ in the triangular grid, there is at most a single $(2,1)$-triangular hitomezashi pattern containing $s$; indeed, $s$ determines the positions of all other stitches because of the $120^\circ$ branching at all stitch endpoints.  Moreover, this procedure produces a valid $(2,1)$-triangular hitomezashi pattern.  See Figure~\ref{fig:triangular-complete}.

Consider a single grid line.  There are three possible ways to place stitches on this grid line.  By the discussion in the previous paragraph, each of the three choices results in a single $(2,1)$-triangular hitomezashi pattern.  These three patterns are clearly distinct (the stitches on our chosen grid line are in different positions) and differ only by translations.
\end{proof}

\begin{figure}[ht]
  \begin{center}\includegraphics[height=6cm]{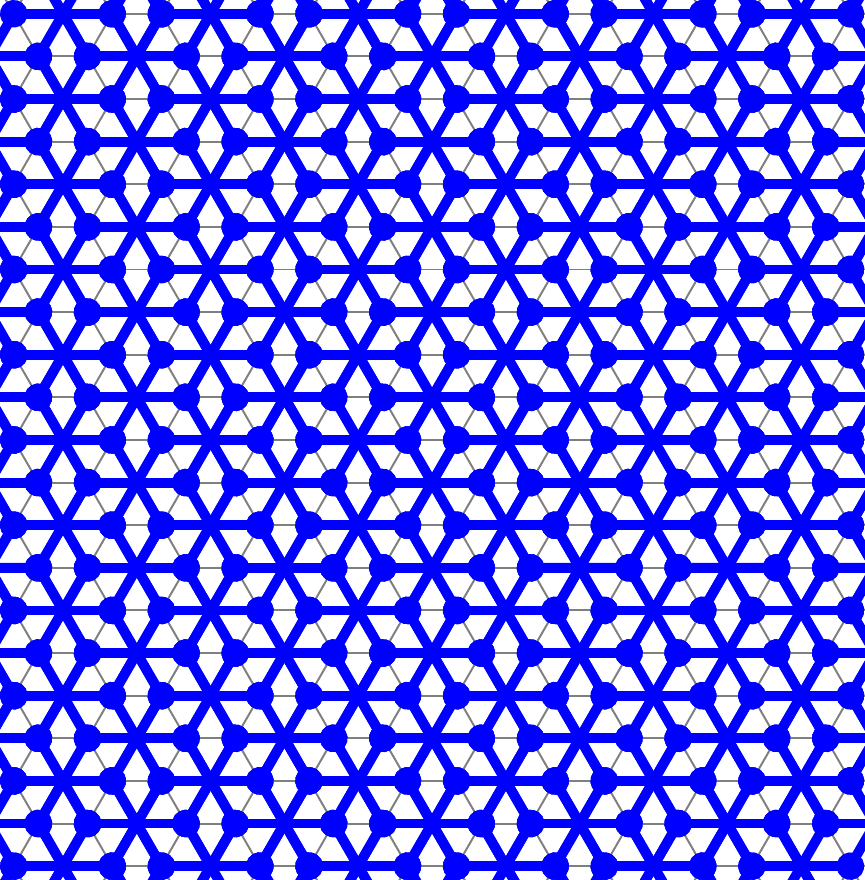}
  \end{center}
  \caption{The unique (up to translation) $(2,1)$-triangular hitomezashi pattern.}\label{fig:triangular-complete}
\end{figure}

\section{Further directions}\label{sec:further}

\subsection{Maximum length for all widths and heights}
Our Theorem~\ref{thm:max-length otherwise} exactly determines the maximum possible length of a hitomezashi loop $L$ with width $w$ and height $h$ for many pairs $(w,h)$ of natural numbers, but the case where both $w$ and $h$ are $3$ modulo $4$ remains open.  We suspect that in this case, the loops maximizing length look like ``combs with one tooth missing.'' 

\subsection{Short stitching on the triangular grid}\label{sec:conclusion-triangular}

A \dfn{triangular hitomezashi pattern} is an arrangement of unit-length stitches in the triangular grid such that every lattice point is the endpoint of exactly one east/west stitch, one northeast/southwest stitch, and one northwest/southeast stitch. These patterns are the same as the $(1,1)$-triangular hitomezashi patterns defined in Section~\ref{sec:long-triangular}. We do not fully understand what the finite connected components in a triangular hitomezashi pattern can look like, but we have formulated a conjecture based on small examples, three of which appear in Figure~\ref{fig:triangular-small}. 

\begin{figure}[ht]
  \begin{center}$\raisebox{0.7cm}{\includegraphics[height=1.466cm]{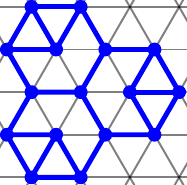}}$\qquad\qquad\includegraphics[height=3.5cm]{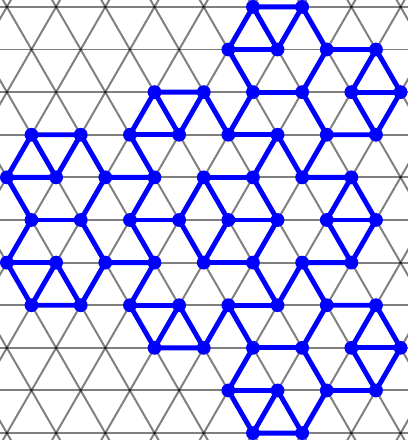}\qquad\qquad\includegraphics[height=3.5cm]{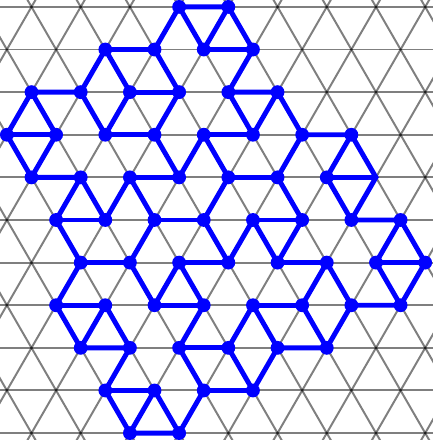}
  \end{center}
  \caption{Three finite connected components that can appear in triangular hitomezashi patterns.}\label{fig:triangular-small}
\end{figure}

\begin{conjecture}\label{conj:divisible_16}
The number of vertices in a finite connected component of a triangular hitomezashi pattern must be divisible by $16$. 
\end{conjecture}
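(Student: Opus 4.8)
To attack Conjecture~\ref{conj:divisible_16}, I would exploit the defining property that every lattice point of a triangular hitomezashi pattern is an endpoint of exactly one stitch in each of the three directions: this says precisely that the stitches of a finite connected component $C$ are the edges of a finite cubic graph equipped with a proper $3$-edge-coloring, the color classes being the east/west, northeast/southwest, and northwest/southeast stitches. Write $V$ for the number of vertices of $C$. For any pair of these color classes --- say east/west (call it color $1$) and northeast/southwest (color $2$) --- the subgraph $C_{12}\subseteq C$ formed by those two classes is $2$-regular, hence a disjoint union of cycles whose vertex sets partition $V(C)$. The crucial point is that the linear map sending the east/west unit vector to $(1,0)$ and the northeast/southwest unit vector to $(0,1)$ is a bijection from the triangular lattice to $\mathbb Z^2$ that carries $C_{12}$ to a subgraph of $\mathbb Z^2$ in which every vertex meets exactly one horizontal and one vertical edge --- i.e.\ to a finite collection of loops of an ordinary square-grid hitomezashi pattern. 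Thus every cycle of $C_{12}$ is, combinatorially, a square-grid hitomezashi loop, and the same holds for the analogous decompositions $C_{13}$ and $C_{23}$.

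Applying the congruences of Pete~\cite{Pete} and \cite{defant kravitz}, each such cycle has $\length\equiv 4\pmod 8$ (and encloses area $\equiv 1\pmod 4$). Since the cycles of $C_{12}$ partition $V(C)$ and each cycle has as many vertices as edges, summing lengths gives $V\equiv 4\,p_{12}\pmod 8$, where $p_{12}$ is the number of cycles of $C_{12}$; likewise $V\equiv 4\,p_{13}\equiv 4\,p_{23}\pmod 8$. Hence $p_{12},p_{13},p_{23}$ all have a common parity, and this already forces $V\equiv 0$ or $4\pmod 8$.

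The obstacle is that this soft decomposition, fed only the $\bmod\ 8$ congruence, cannot by itself surpass $\bmod\ 8$: there is so far no bound on how many cycles $C_{12}$ has, nor any constraint on their individual lengths beyond summing to $V$. Two further inputs are therefore needed. First, one should show each $p_{ij}$ is even. I would attempt this by studying the planar picture: the $p_{12}$ cycles of $C_{12}$ are pairwise-disjoint simple closed curves, and contracting each of them to a point turns the color-$3$ stitches into a connected multigraph on $p_{12}$ vertices in which every vertex has even degree (its degree is the length of the corresponding loop, which is $\equiv 0\pmod 4$), so this multigraph is Eulerian; a parity invariant of its planar embedding --- or a discrete Gauss--Bonnet count of the turning along the outer boundary walk of $C$, or a triangular-lattice Pick's theorem applied to the region enclosed by $C$ together with the area congruences --- should pin down the parity of $p_{12}$. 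Second, and here I expect the genuine difficulty to lie, one needs a sharper congruence for the lengths of precisely those square-grid hitomezashi loops that can arise as $2$-color cycles inside a triangular component, or else a real structural classification of finite triangular components. The small examples suggest that every finite component is assembled from ``gadgets,'' each contributing a multiple of $16$ to the vertex count; isolating such a decomposition --- perhaps by running Pete's pair-of-Dyck-paths encoding on all three $2$-color sub-patterns simultaneously and tracking how the three families of loops interlock --- appears to be the crux, and I would expect a full proof of the conjecture to hinge on developing this structure theory rather than on the decomposition argument alone.
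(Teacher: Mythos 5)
Conjecture~\ref{conj:divisible_16} is stated in the paper as an open conjecture: the paper offers no proof, so there is no argument of the authors to compare yours against, and the only question is whether your proposal settles it. It does not, as you yourself acknowledge. What you do establish is correct and worth recording: any two of the three stitch directions form a basis of the triangular lattice, so the corresponding two-color subpattern is carried by a lattice isomorphism onto an ordinary square-grid hitomezashi pattern (this is essentially the same change of coordinates the paper uses in Section~\ref{sec:further} to realize triangular patterns as type-$\{(1,0),(0,1),(1,1)\}$ patterns); each two-color cycle inside a finite component $C$ is then a genuine hitomezashi loop, and the length congruence of \cite{defant kravitz} yields $v(C)\equiv 4p_{ij}\pmod 8$ for each pair of directions, where $v(C)$ is the number of vertices and $p_{ij}$ the number of two-color cycles. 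Hence $v(C)\equiv 0$ or $4\pmod 8$ and the three counts $p_{12},p_{13},p_{23}$ have a common parity. That is a legitimate partial result toward the conjecture.

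Beyond this point, however, the proposal is a plan rather than a proof, and the gap is exactly where you place it. The contraction observation (the color-$3$ multigraph on the $p_{12}$ contracted cycles is connected with all degrees divisible by $4$, hence Eulerian) is true but by itself carries no information about the parity of $p_{12}$; the suggested planar parity invariant, Gauss--Bonnet turning count, or triangular Pick argument is not formulated, let alone carried out. More fundamentally, even if one proved that every $p_{ij}$ is even, the decomposition argument tops out at $v(C)\equiv 0\pmod 8$: with only the $\bmod\ 8$ congruence on individual loop lengths, two loops of length $4$ and $12$ are indistinguishable from two loops of length $4$ and $4$, so nothing forces divisibility by $16$. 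The needed ``second input''---a sharper congruence for the specific square-grid loops that can arise as two-color cycles of a triangular component, or a structural decomposition of finite components into $16$-vertex pieces---is precisely the content of the conjecture, and no argument for it is given. So the conjecture remains open after your proposal; what you have is a correct reduction framework and a $\bmod\ 8$ statement, which could reasonably be stated as a proposition, but not a proof of divisibility by $16$.
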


Each vertex in a triangular hitomezashi pattern is of one of the following eight types: 
\[\begin{array}{l} \includegraphics[height=0.788cm]{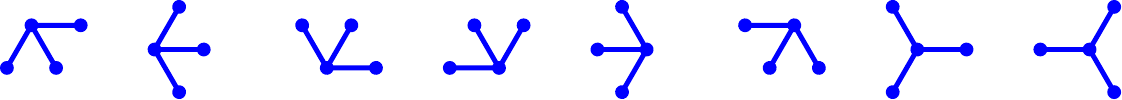}
\end{array}\]
It could be fruitful to consider the distributions of these different types of vertices in finite connected components. 

\subsection{Other lattices}
Consider a nonempty finite set $\Phi\subseteq\mathbb Z^2\setminus\{(0,0)\}$ of nonzero vectors such that $\mathbb R\alpha\cap\Phi=\{\alpha\}$ for every $\alpha\in\Phi$. Define a \dfn{hitomezashi pattern of type $\Phi$} to be a collection $H$ of line segments in $\mathbb R^2$ such that 
\begin{itemize}
    \item every line segment $[u,v]$ in $H$ is such that $u,v\in\mathbb Z^2$ and $v-u\in\pm\Phi$;
    \item for every $u\in\mathbb Z^2$ and every $\alpha\in\Phi$, exactly one of the line segments $[u,u-\alpha]$ and $[u,u+\alpha]$ is in $H$. 
\end{itemize}
Note that a hitomezashi pattern of type $\{(0,1),(1,0)\}$ is the same as an ordinary hitomezashi pattern. We can also realize triangular hitomezashi patterns in this framework. Suppose one of the equilateral triangular cells in the triangular grid has vertices $(0,0)$, $(1,0)$, and $(1/2,\sqrt 3/2)$. Under the linear automorphism of $\mathbb R^2$ defined by $(1,0)\mapsto (1,0)$ and $(1/2,\sqrt 3/2)\mapsto (1,1)$, triangular hitomezashi patterns correspond precisely to hitomezashi patterns of type $\{(1,0),(0,1),(1,1)\}$. 

It would be interesting to see what kinds of structure can emerge in hitomezashi patterns of type $\Phi$ for various choices of $\Phi$. In particular, we can view a hitomezashi pattern $H$ of type $\Phi$ as the set of edges in an infinite graph with vertex set $\mathbb Z^2$. When we speak about the \dfn{connected components} of $H$, we mean the connected components of this graph.

\begin{figure}[ht]
  \begin{center}\includegraphics[height=3.5cm]{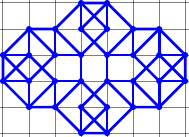}
  \end{center}
  \caption{A finite connected component in a hitomezashi pattern of type $\{(1,0),(0,1),(1,1),(-1,1)\}$. }\label{fig:finite component}
\end{figure}

\begin{question}\label{quest:finite}
Suppose $\Phi\subseteq\mathbb Z^2\setminus\{(0,0)\}$ is a nonempty, finite set such that $\mathbb R\alpha\cap\Phi=\{\alpha\}$ for every $\alpha\in\Phi$. Does there exist a hitomezashi pattern of type $\Phi$ that has a finite connected component? 
\end{question}

When $\Phi$ is $\{(1,0),(0,1)\}$, $\{(1,0),(0,1),(1,1)\}$, or $\{(1,0),(0,1),(1,1),(-1,1)\}$, the answer to Question~\ref{quest:finite} is affirmative; in the last case, Figure~\ref{fig:finite component} shows the smallest (and essentially only) finite connected component that we were able to find. 

\section*{Acknowledgements}
We thank an anonymous referee for pointing out an issue with an earlier version of Theorem~\ref{thm:max-length otherwise}.


\begin{thebibliography}{99}

\bibitem{defant kravitz} C. Defant and N. Kravitz, Loops and regions in hitomezashi patterns. \emph{Preprint} arXiv:2201.03461.

\bibitem{Numberphile}
B. Haran [Numberphile], Hitomezashi stitch patterns -- Numberphile. \emph{YouTube}, (2021).\\ \url{https://www.youtube.com/watch?v=JbfhzlMk2eY&t=1s&ab_channel=Numberphile}. Accessed December 6, 2021. 

\bibitem{HayesSeaton2}
C. Hayes and K. A. Seaton, Mathematical specification of hitomezashi designs. \emph{Preprint} arXiv:2208.12580. 


\bibitem{HayesSeaton}
C. Hayes and K. A. Seaton, A two-dimensional introduction to sashiko. \emph{Bridges Conference Proceedings}, (2020), 517--524.

\bibitem{Holden} J. Holden, The graph theory of blackwork embroidery.  \emph{Making Mathematics with Needlework} (2005), 137--154.

\bibitem{Pete}
G. Pete, Corner percolation on $\mathbb{Z}^2$ and the square root of $17$.  \emph{Ann. Prob.}, {\b36} (2008), 1711--1747.
\end{thebibliography}
\end{document}